\documentclass{amsart}[12pt]
\usepackage[utf8]{inputenc}
\usepackage{amsmath,amssymb,amstext,amsthm, geometry, verbatim, tikz-cd, enumerate}
\usepackage[all]{xy}
\newtheorem{theorem}{Theorem}[section]
\newtheorem{lemma}[theorem]{Lemma}
\newtheorem{proposition}[theorem]{Proposition}

\theoremstyle{definition}
\newtheorem{definition}[theorem]{Definition}
\newtheorem{example}{Example}[section]
\theoremstyle{remark}
\newtheorem{remark}[theorem]{Remark}
\numberwithin{equation}{section}

\newcommand \Z{\mathbb Z}
\newcommand \R{\mathbb R}

\newcommand \C{\mathbb C}

\newcommand \id{\mathrm{Id}}

\newcommand \mc{\mathcal}
\newcommand \mb{\mathbb}

\newcommand \sHom{\mc{H}om}
\newcommand \sEnd{\mc{E}nd}
\newcommand \sExt{\mc{E}xt}
\newcommand \Div{[\mathrm{Div}(X)]}
\DeclareMathOperator{\Hom}{Hom}

\DeclareMathOperator{\Pic}{Pic}
\DeclareMathOperator{\GL}{GL}

\DeclareMathOperator{\rank}{rk}
\DeclareMathOperator{\Span}{span}

\DeclareMathOperator{\im}{Im}

\DeclareMathOperator{\Ext}{Ext}

\DeclareMathOperator{\tr}{Tr}
\DeclareMathOperator{\coker}{coker}
\DeclareMathOperator{\chern}{ch}

\title{{P}oisson three-folds constructed from co-{H}iggs bundles on {H}opf surfaces}
\author{Eric Boulter}
\address{Department of Mathematics and Statistics, University of Saskatchewan, SK, Canada~ S7N 5E6}
\email{eric.boulter@usask.ca, erictboulter@gmail.com}
\subjclass[2020]{Primary: 32L05, Secondary: 53D17, 14D21}
\date{\today}
\begin{document}
\begin{abstract}
    This paper extends a previous work in which the rank-2 co-Higgs bundles on a Hopf surface are classified based on the data of the underlying vector bundle. The aim of the paper is to study the Poisson 3-folds that can be constructed from these co-Higgs bundles by describing their symplectic leaves.
\end{abstract}
\maketitle
\section{Introduction}

Poisson structures are geometric objects arising from the ideas of Hamiltonian mechanics where, using a Poisson structure on the phase space, one can recover evolution equations for a system from the conserved Hamiltonian, a function typically representing the total energy of the system. A Poisson structure on a manifold also naturally gives rise to an even-dimensional foliation of the manifold into symplectic leaves where the restriction of the Poisson structure is invertible. The dynamics given by a Hamiltonian on this Poisson manifold are constrained to the symplectic leaves, so understanding this foliation gives insight into conserved quantities of such dynamics. In this paper, we will investigate these symplectic foliations on a particular family of Poisson manifolds: $\mb{P}^1$-bundles $p:Y\to X$ over a compact complex manifold $X$ whose fibres $p^{-1}(x)$ are co-isotropic. In other words, we look at Poisson structures on $Y$ such that for any function $f \in \mc{O}_X$, the Hamiltonian flow with respect to $p^*(f)$ preserves fibres of $p$. The Poisson structures of this type can be studied in terms of rank-2 trace-free co-Higgs bundles on the base manifold $X$ \cite{Polishchuk97}. This problem has been studied in detail for the case when $X$ is curve \cite{Bartocci-Macri05}, leaving the case where $X$ is a surface as the next natural setting for this problem. Based on \cite{Rayan-14}, we expect the more interesting cases to occur for $X$ of Kodaira dimension $-\infty$. In this paper, we will focus on the case where $X$ is a Hopf surface as these surfaces have some advantages in computational tractability over other Kodaira dimension $-\infty$ surfaces.

In Section \ref{Poisson-co-Higgs} we discuss this correspondence between co-Higgs bundles and associated Poisson structures in more detail, and describe a canonical factoring for a rank-2 trace-free co-Higgs bundle which will be used for later results.

In Sections \ref{Hopf-classification} and \ref{Hopf-flat} we recall the classification of primary Hopf surfaces and the structure of flat bundles of rank 1 and 2  on each class of Hopf surface following \cite{Mall91,Mall93}. We provide additional details both to aid in later computations and to correct some minor errors from the original referenced articles.

In Section \ref{flat-co-Higgs}, we directly compute all possible flat rank-2 trace-free co-Higgs bundles for each type of primary Hopf surface using our factorization from Section \ref{Poisson-co-Higgs}. In this section we also show that any non-zero rank-2 trace-free co-Higgs bundle on a primary Hopf surface will have an underlying vector bundle which is filtrable, meaning that it admits a sub-line bundle (which may or may not be invariant with respect to the Higgs field).

In Section \ref{co-Higgs-non-flat}, we consider co-Higgs bundles with non-zero second Chern class. The method for constructing these co-Higgs bundles varies depending on the algebraic dimension of the Hopf surface. On a Hopf surface with algebraic dimension 1, every filtrable bundle with non-zero second Chern class can be uniquely described via a sequence of elementary modifications ending in a flat bundle. We describe the effect of elementary modifications on co-Higgs bundles to reduce to the case of Section \ref{flat-co-Higgs}. For the case of Hopf surfaces with algebraic dimension 0, not every filtrable bundle can be constructed from a flat bundle via elementary modifications, but we give another method to describe Higgs fields for the bundles which cannot be constructed in this way.

In Section \ref{foliations}, we give some results that help to describe the symplectic foliations for a Poisson structure arising from a rank-2 trace-free co-Higgs bundle and look at some example computations relating to co-Higgs bundles constructed in the previous sections.

\section{Co-Higgs bundles and Poisson structures}\label{Poisson-co-Higgs}

\subsection{Complex Poisson manifolds}
A \emph{Poisson structure} on a complex manifold $X$ consists of a Lie bracket $\{\cdot,\cdot\}:\mc{O}_X\times\mc{O}_X\to \mc{O}_X$ satisfying the Leibniz rule $\{fg,h\}=f\{g,h\}+g\{f,h\}$ for any open set $U\subseteq X$ and functions $f,g,h\in \mc{O}_X(U)$. 
A Poisson structure can also be represented as a bivector $\pi \in H^0(X,\wedge^2\mc{T}_X)$ so that $\pi(\mathrm{d}f,\mathrm{d}g)=\{f,g\}.$ 
Conversely, a bivector $\sigma$ corresponds to a Poisson structure if and only if the Schouten bracket $[\sigma,\sigma]\in H^0(X,\wedge^3\mc{T}_X)$ vanishes.

Given a Poisson bivector $\pi$, we can associate to any holomorphic function $f$ a \emph{Hamiltonian vector field} $H_f:=\iota_{\mathrm{d}f}\pi$. At any point $x$, the flows of the Hamiltonian vector fields $H_f$ for all functions $f$ defined near $x$ generate an immersed complex subvariety $S$ so that $\pi|_S$ is invertible when viewed as a map from $\mc{T}^*_X$ to $\mc{T}_X$. 
We call $S$ a \emph{symplectic leaf} of $\pi$, since the inverse of $\pi|_S$ is a holomorphic symplectic form on $S$. 
These symplectic leaves give a foliation of $X$, called the \emph{symplectic foliation} associated to $\pi$.

\subsection{Co-Higgs bundles}
Let $X$ be a compact complex manifold. 
A \emph{co-Higgs bundle} on $X$ consists of a pair $(E,\Phi)$, where $E$ is a holomorphic vector bundle on $X$, and $\Phi$ is a holomorphic section of $\sEnd(E)\otimes \mc{T}_X$ called the \emph{Higgs field} so that $\Phi\wedge \Phi=0$ where the wedge product acts as the commutator on the $\sEnd(E)$ factor and as the vector field wedge product on the $\mc{T}_X$ factor. 
Using the decomposition $\sEnd(E)=\mc{O}_X\oplus\sEnd_0(E)$ into the identity and trace-free components, we can write any Higgs field $\Phi$ as $\mathrm{Id}_E\otimes \xi+\Phi_0$, where $\xi$ is a holomorphic vector field and $\Phi_0 \in H^0(X,\sEnd_0(E)\otimes \mc{T}_X)$. 
If $\xi=0$, we say that the co-Higgs bundle is \emph{trace-free}. 
As shown in \cite{Polishchuk97} and \cite{RayanThesis}, to any rank-2 trace-free Higgs bundle $(E,\Phi)$ we can associate a Poisson manifold $(\mb{P}(E),\sigma_\Phi)$, where the Poisson structure $\sigma_\Phi$ is constructed from the bivector $\sigma_\Phi:=\Phi\in H^0(X,\sEnd_0(E)\otimes \mc{T}_X)\simeq H^0(\mb{P}(E),\mc{T}_p\otimes p^*\mc{T}_X)\subset H^0(\mb{P}(E),\wedge^2\mc{T}_{\mb{P}(E)})$, where $p:\mb{P}(E)\to X$ is the bundle map and $\mc{T}_p$ is the relative tangent sheaf. 
This comes by considering the natural exact sequence 
$$\begin{tikzcd}
    0  \arrow[r] & \mc{T}_p \arrow[r] &\mc{T}_{\mb{P}(E)} \arrow[r] & p^*\mc{T}_X \arrow[r] & 0,
\end{tikzcd}$$
and noting that $p_*\mc{T}_p\simeq \sEnd_0(E)$ since fibres of $\sEnd_0(E)$ parameterize deformations of the projective line fibres of $\mb{P}(E)$. 
The condition that the Poisson structure satisfies the Jacobi identity is equivalent to the condition that $\Phi\wedge\Phi=0$ \cite[p.1425, Ex 1]{Polishchuk97}, \cite[Exercise 2.12]{Pym18}.

The Poisson structures on $\mb{P}(E)$ arising in this way are precisely those for which the fibres of the projective bundle structure are co-isotropic, meaning that for any fibre $F=p^{-1}(x), x \in X$, we have that $\pi(I_F/I_F^2)\subseteq \mc{T}_F$ when we interpret $\pi$ as a bundle map from $\mc{T}_{\mb{P}(E)}^*$ to $\mc{T}_{\mb{P}(E)}$.

In the rank-2 trace-free case specifically, we can describe the co-Higgs bundles concretely in terms of line bundle-twisted Higgs bundles. 
\begin{lemma}\label{factor-Higgs}
    Let $X$ be a compact complex manifold, and let $(E,\Phi)$ be a rank-2 trace-free co-Higgs bundle on $X$. Then there is a line bundle $\lambda\in \Pic(X)$, a $\lambda$-twisted Higgs field $\phi_0 \in H^0(X,\sEnd_0(E)\otimes \lambda)$, and a meromorphic vector field $\xi \in H^0(X,\mc{T}_X\otimes \lambda^{-1})$ such that $\Phi=\phi_0\otimes \xi$. If $\phi_0$ is assumed to vanish only on a set of codimension 2, then $\lambda$ is unique, and $\Phi$ is determined by $\phi_0$ and $\xi$ up to the $\C^*$-action $t\cdot (\phi_0\otimes \xi)=(t\phi)\otimes (t^{-1}\xi)$.
\end{lemma}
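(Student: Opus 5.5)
The plan is to use the integrability condition $\Phi\wedge\Phi=0$ to show that $\Phi$ has rank at most one in its $\sEnd_0(E)$ factor, and then to take $\lambda^{-1}$ to be the saturation of the image of $\Phi$. If $\Phi=0$ we take $\lambda=\mc{O}_X$, $\phi_0=0$ and $\xi=0$, so assume $\Phi\neq0$. Work in a local chart with coordinates $z_1,\dots,z_n$ and write $\Phi=\sum_i A_i\otimes\partial_{z_i}$ with $A_i$ local sections of $\sEnd_0(E)$. Then
$$\Phi\wedge\Phi=\sum_{i<j}2[A_i,A_j]\,\partial_{z_i}\wedge\partial_{z_j},$$
so $\Phi\wedge\Phi=0$ says exactly that the $A_i$ pairwise commute. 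In $\mathfrak{sl}_2(\C)$ the centralizer of a nonzero element is one-dimensional. Hence at every point where some $A_i$ is nonzero, all the $A_j$ are proportional to it. Viewing $\Phi$ as a sheaf map $\mc{T}^*_X\to\sEnd_0(E)$, this shows its image $\mc{I}$ is a subsheaf of generic rank one.

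Next let $L\subseteq\sEnd_0(E)$ be the saturation of $\mc{I}$, meaning the kernel of $\sEnd_0(E)\to(\sEnd_0(E)/\mc{I})/\mathrm{torsion}$. As a saturated subsheaf of a locally free sheaf, $L$ is reflexive of rank one. On a complex manifold a rank-one reflexive sheaf is a line bundle, so we write $L=\lambda^{-1}$ with $\lambda\in\Pic(X)$. The inclusion $\lambda^{-1}\hookrightarrow\sEnd_0(E)$ is a section $\phi_0\in H^0(X,\sEnd_0(E)\otimes\lambda)$. Because the quotient $\sEnd_0(E)/\lambda^{-1}$ is torsion-free, $\phi_0$ vanishes only in codimension $\geq2$. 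Tensoring with the locally free sheaf $\mc{T}_X$ preserves saturation, so $\lambda^{-1}\otimes\mc{T}_X$ is saturated in $\sEnd_0(E)\otimes\mc{T}_X$. Since $\Phi$ lies in $\lambda^{-1}\otimes\mc{T}_X$ generically, it is a section of that subsheaf everywhere. This gives $\xi\in H^0(X,\mc{T}_X\otimes\lambda^{-1})$ with $\Phi=\phi_0\otimes\xi$.

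For uniqueness, suppose $\Phi=\phi_0'\otimes\xi'$, where $\phi_0'\in H^0(X,\sEnd_0(E)\otimes\lambda')$ vanishes only in codimension $2$ and $\xi'\in H^0(X,\mc{T}_X\otimes\lambda'^{-1})$. Since $\Phi\neq0$, both $\xi'$ and $\phi_0'$ are nonzero. Away from the zero set of $\xi'$, the image of $\Phi$ equals $\phi_0'(\lambda'^{-1})$, so this line subsheaf agrees with $\lambda^{-1}$ on a dense open set. The codimension-2 hypothesis makes $\phi_0'(\lambda'^{-1})$ saturated. Two saturated subsheaves that agree generically are equal, so $\lambda'\cong\lambda$ and $\phi_0'=u\phi_0$ for a nowhere-vanishing $u\in H^0(X,\mc{O}_X^*)$. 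By compactness, $u=t\in\C^*$. Then $\phi_0\otimes(\xi-t\xi')=0$, and since $\phi_0$ is nonzero off codimension $2$, $\xi'=t^{-1}\xi$ there, hence everywhere by continuity (Hartogs). This gives exactly the stated $\C^*$-ambiguity.

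The main obstacle is the sheaf-theoretic step. One must check that saturation commutes with tensoring by $\mc{T}_X$, and that a section lying generically in a saturated subsheaf lies in it globally. Both follow from torsion-freeness of the quotient. The pointwise $\mathfrak{sl}_2$ centralizer fact is elementary.
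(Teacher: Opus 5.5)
Your proof is correct, but it takes a different route from the paper's.

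\textbf{The paper's route.} The paper works in local frames. It writes $\Phi|_U=\sum_i\psi_i\otimes\xi_i$ and uses $[\psi_i,\psi_j]=0$ to obtain a local pure-tensor form $\psi_0\otimes(\sum_i f_i\xi_i)$ with $\psi_0$ only meromorphic. It then glues these local factorizations, asserting that on overlaps they differ by a nowhere-vanishing holomorphic function. Uniqueness is argued through a meromorphic function $f$ with $\phi_0'=f\phi_0$, which can have neither zeros nor poles and is therefore constant.

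\textbf{Your route.} You replace the local construction and the gluing with one canonical global object: the saturation of the image of $\Phi\colon\mc{T}_X^*\to\sEnd_0(E)$. The key input is that a rank-one reflexive sheaf on a manifold is a line bundle.

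\textbf{What each approach buys.} Local factorizations are determined only up to meromorphic multiples. So before the paper's transition functions become holomorphic units, one must clear common divisors of the entries of $\psi_0$, using that $\mc{O}_{X,x}$ is a UFD. The paper's gluing needs this normalization tacitly; in your argument it is built into saturation. Your $\phi_0$ automatically vanishes only in codimension $\geq 2$, which is exactly the normalized form the uniqueness clause refers to. Uniqueness then reduces to the fact that two saturated subsheaves agreeing on a dense open set coincide. The paper's version is more hands-on and coordinate-explicit, which suits its later concrete computations on Hopf surfaces.

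\textbf{Further remarks.} You correctly observe that uniqueness requires $\Phi\neq 0$, which the statement leaves implicit. One small simplification is available: since $\mc{I}\subseteq\lambda^{-1}$ by construction, $\Phi$ already factors as $\mc{T}_X^*\to\lambda^{-1}\to\sEnd_0(E)$ everywhere. The detour through saturation of $\lambda^{-1}\otimes\mc{T}_X$ is therefore unnecessary.
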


\begin{proof}
    Let $U$ be an open subset trivializing both $E$ and $\mc{T}_X$, and let $\xi_1, \ldots \xi_n$ be a local frame for $\mc{T}_X$ defined on $U$. We can write $\Phi|_U= \sum\limits_{i=1}^n \psi_i\otimes\xi_i$, where the $\psi_i$ are trace-free matrices with coefficients in $\mc{M}_X(U)$. Since $\Phi$ is a co-Higgs bundle, we must have $\Phi\wedge\Phi|_U=\sum\limits_{i=1}^{n-1}\sum\limits_{j=i+1}^n[\psi_i,\psi_j]\otimes \xi_i\wedge \xi_j=0.$ Since $\xi_i\wedge \xi_j$ is nowhere-vanishing on $U$, this implies that $[\psi_i,\psi_j]=0$, which can only occur if there are functions $f_i,f_j\in \mc{O}_X(U)$ such that $f_j\psi_1=f_i\psi_j$. From this we conclude that $\Phi|_U=\psi_0\otimes(\sum\limits_{i=1}^n f_i\xi_i)$ for some $\mc{M}_X(U)$-valued matrix $\psi_0$.

    Now let $U$ and $U'$ be two open sets which trivialize $E$ and $\mc{T}_X$. Then $\Phi|_U=\psi\otimes \xi$ and $\Phi|_{U'}=\psi'\otimes \xi'$ with $(\psi\otimes \xi)|_{U'}=(\psi'\otimes \xi')|_{U}$. This is only possible if there is a non-vanishing function $f\in \mc{O}_X^*(U\cap U')$ so that $\psi=f\psi'$ and $f\xi=\xi'$, so the pure tensor description of $\Phi$ extends to $U\cup U'$ up to twisting factors by a line bundle. Applying this argument on a suitable open cover gives the result.

    Suppose that there is another collection $(\lambda', \phi_0',\xi')$ with $\lambda'\in \Pic(X)$, $\phi_0'\in H^0(X,\sEnd_0(E)\otimes \lambda')$, and $\xi'\in H^0(X,\mc{T}_X\otimes (\lambda')^{-1})$ so that $\Phi=\phi_0'\otimes \xi'.$ Then there is a meromorphic function $f$ so that $\phi_0'=f\phi_0$ and $\xi=f\xi'.$ If we assume $\phi_0$ does not vanish on any codimension-1 subset, then either $\phi_0'$ vanishes on the zeros of $f$, or $f$ is a constant and $\lambda'\simeq\lambda$.
\end{proof}

We will make use of this restricted structure to describe symplectic foliations in terms of the meromorphic vector field $\xi$ associated to a co-Higgs bundle. In the sequel, all twisted Higgs bundles $(E,\Phi)$ will be assumed to have $\rank E=2$ and $\tr\Phi=0$ except where otherwise stated.

\section{Hopf surfaces}\label{Hopf-classification}
A (primary) Hopf surface is a compact complex surface homeomorphic to $S^1\times S^3$. A general Hopf surface is described by four parameters $a,b,\lambda \in \C, r\in \Z_{> 0}$ with $0<|a|\leq |b|<1$ and $(a-b^r)\lambda=0$ as the quotient of $W=\mathbb{C}^2\setminus\{0\}$ by the group of automorphisms generated by $\varphi:(x,y)\mapsto (ax+\lambda y^r,by)$. We can separate Hopf surfaces into five groups, each with different characteristic behaviour \cite{Mall91}:
\begin{enumerate}
    \item (Classical) $\lambda=0$ and $a=b$. 
    \item (Resonant) $\lambda=0$, $r>1$, and $a=b^r$. 
    \item (Hyper-resonant) $\lambda=0$, and there are integers $r_1>1,r_2>1$ so that $a^{r_1}=b^{r_2}$ but $a^{k_1}\neq b^{k_2}$ for integers $k_1,k_2$ when $1\leq k_1<r_1$, $1\leq k_2<r_2$.
    \item (Generic) $\lambda=0$, $a^{k_1}\neq b^{k_2}$ for any positive integers $k_1,k_2$.
    \item (Exceptional) $\lambda\neq 0$ and $a=b^r$.
\end{enumerate}
We say that a Hopf surface is diagonal if it has $\lambda=0$, in which case it belongs to one of the families (1)-(4).
\begin{remark}
    If $X_{(a,b,\lambda,r)},X_{(a',b',\lambda',r')}$ are two Hopf surfaces defined by parameters $(a,b,\lambda,r)$ and $(a',b',\lambda',r')$, respectively, then $X\simeq X'$ if and only if there is an isomorphism $\sigma:W\to W$ so that $\varphi\circ\sigma=\sigma\circ\varphi'$, where $\varphi=(x,y)\mapsto (ax+\lambda y^r, by)$ and $\varphi'=(x,y)\mapsto (a'x+\lambda'y^{r'},b'y)$.
    
    Using this fact, we can easily check that two diagonal Hopf surfaces $X_{(a,b,0,r)}$ and $X_{(a',b',0,r')}$ are isomorphic if and only if $\{a,b\}=\{a',b'\}$. We also get that the Hopf surface $X_{(b^r,b,\lambda,r)}$ is isomorphic to $X_{(b^r,b,1,r)}$ whenever $\lambda\neq 0$ using the map $\sigma:(x,y)\mapsto (x,ty)$ for some $t$ satisfying $t^r=\lambda$.
\end{remark}

\begin{definition}
    Let $X$ be a complex manifold with fundamental group $\Gamma$ and universal cover $\gamma:Y\to X$. An \emph{$r$-dimensional factor of automorphy} for $X$ is a holomorphic map $A:\Gamma\times Y \to \mathrm{GL}(r,\mc{O}_Y)$ satisfying $A(\lambda\mu, y)=A(\lambda, \mu\cdot y)A(\mu,y)$ for any $\lambda,\mu \in \Gamma$ and $y \in Y$, where $\mu\cdot y$ is the action of $\mu$ by the corresponding deck transformation. The vector bundle $E_A$ represented by a factor of automorphy $A$ is the quotient of the trivial vector bundle $Y\times \C^r$ by the equivalence relation $(y,v) \sim (\lambda\cdot y, A(\lambda,y)v)$ for all $\lambda \in \Gamma$.
\end{definition}
\begin{remark}
    For a Hopf surface $X$, which has universal cover $W=\C^2\setminus\{0\}$ and the fundamental group $\Z$, any factor of automorphy $A:\Z\times W\to \mathrm{GL}(2,\mc{O}_W)$ is completely determined by $A|_{\{1\}\times W}$. In the sequel we will write factors of automorphy for $X$ as sections of $\mathrm{GL}(r,\mc{O}_W).$
\end{remark}
On any Hopf surface $X$, $\Pic(X)\simeq \C^*$ as every line bundle is isomorphic to one of the form $L_t$, where $L_t$ is the line bundle represented by a constant factor of automorphy $t$ for some $t\in \C^*\subset \mathrm{GL}(1,\mc{O}_W)$. Holomorphic sections of a bundle $L_t$ are in one-to-one correspondence with holomorphic sections $s \in \mc{O}_W$ with $s(\varphi(x,y))=ts(x,y)$. In particular, for a diagonal Hopf surface $H^0(X,L_t)=\Span\{x^ky^\ell:k,\ell\in \Z_{\geq 0}, a^kb^\ell=t\}$ and for an exceptional Hopf surface $H^0(X,L_t)=\Span\{y^k:k \in \Z_{\geq 0}, b^k=t\}$. Every primary Hopf surface $X$ has $K_X\simeq L_{ab}$ and every line bundle $L_t \in \Pic(X)$ has $\chi(L_t)=0$, so all cohomology groups of line bundles can be calculated using the correspondence for global sections. If we let $\Div$ be the subgroup of $\Pic(X)$ generated by line bundles with a non-zero section, we can describe the bundles in $\Div$ as in Table \ref{line-bundles-Hopf} \cite{Mall91}.
\begin{table}[ht]
    \centering
    \begin{tabular}{|c|c|c|c|}
    \hline
 Surface type & $\Div$ & Generators & Cohomology\\
 \hline
 Classical & $\Z$ & $L_b$ & $h^0(X,(L_b)^k)=\max\{0,k+1\}$\\
 & & & $h^1(X,(L_b)^k)=|1+k|$\\
 & & & $h^2(X,(L_b)^k)=\max\{0,-k-1\}$\\
 \hline
 Resonant & $\Z$ & $L_b$ & $h^0(X,(L_b)^k)=\max\left\{0,\left\lfloor\frac{k}{r}\right\rfloor+1\right\}$\\
 & & & $h^1(X,(L_b)^k)=\left|1+\left\lfloor\frac{k}{r}\right\rfloor\right|$\\
 & & & $h^2(X,(L_b)^k)=\max\left\{0,-\left\lfloor\frac{k}{r}\right\rfloor-1\right\}$\\
 \hline
 Hyper-resonant & $\Z\oplus \Z/d\Z$ & $L_a$, $L_b$ & $h^0(X,(L_a)^k\otimes (L_b)^\ell)=\max\left\{0,\left\lfloor \frac{k}{r_1}\right\rfloor+\left\lfloor \frac{\ell}{r_2}\right\rfloor+1\right\}$\\
 & $d=\gcd(r_1,r_2)$ & & $h^1(X,(L_a)^k\otimes (L_b)^\ell)=\left|1+\left\lfloor \frac{k}{r_1}\right\rfloor+\left\lfloor \frac{\ell}{r_2}\right\rfloor\right|$\\
 & & & $h^2(X,(L_a)^k\otimes (L_b)^\ell)=\max\left\{0,-\left\lfloor \frac{k}{r_1}\right\rfloor-\left\lfloor \frac{\ell}{r_2}\right\rfloor-1\right\}$\\
 \hline
 Generic & $\Z\oplus \Z$ & $L_a$,$L_b$ & $h^0(X,(L_a)^k\otimes (L_b)^\ell)=\begin{cases}
     1 & k\geq 0, \ell\geq 0,\\
     0 & \text{otherwise}.
 \end{cases}$\\
 & & & $h^1(X,(L_a)^k\otimes (L_b)^\ell)=\begin{cases}
     1 & k\geq 0, \ell \geq 0 \text{ or } k\leq -1, \ell\leq -1\\
     0 & \text{otherwise}.
 \end{cases}$\\
 & & & $h^2(X,(L_a)^k\otimes (L_b)^\ell)=\begin{cases}
     1 & k\leq -1, \ell\leq -1\\
     0 & \text{otherwise}.
 \end{cases}$\\
 \hline
 Exceptional & $\Z$ & $L_b$ & $h^0(X,(L_b)^k)=\begin{cases}
     1 & k\geq 0,\\
     0 & \text{otherwise.}
 \end{cases}$\\
 & & & $h^1(X,(L_b)^k)=\begin{cases}
     1 & k\geq 0 \text{ or } k\leq -r-1,\\
     0 & \text{otherwise.}
 \end{cases}$\\
 & & & $h^2(X,(L_b)^k)=\begin{cases}
     1 & k\leq -r-1,\\
     0 & \text{otherwise.}
 \end{cases}$\\
 \hline
\end{tabular}
    \caption{The subgroup $\Div$ for all primary Hopf surfaces}
    \label{line-bundles-Hopf}
\end{table}

For any Hopf surface, there is a degree function $\deg:\Pic(X)\to \R$ given by $\deg(L_t):=\frac{\log{|t|}}{\log{|b|}}.$ Since Hopf surfaces have a connected Picard group, this degree function is unique up to multiplication by a positive constant.
\begin{definition}
    Let $\mc{E}$ be a torsion-free sheaf of rank $r>0$. The \emph{slope} of $\mc{E}$ is given by $\mu(\mc{E}):=\frac{\deg(\det(\mc{E}))}{r}.$ We say that $\mc{E}$ is $\mu$-(semi-)stable if for any subsheaf $\mc{F}\subset \mc{E}$ with $\rank(\mc{F})<\rank(\mc{E})$, $\mu(\mc{F})< (\leq) \mu(E)$.
\end{definition}

For diagonal Hopf surfaces, the tangent bundle is isomorphic to the direct sum $T_X\simeq L_a\oplus L_b$, but the tangent bundle of an exceptional Hopf surface is indecomposable \cite{Beauville00}. In the exceptional case the tangent bundle can be described as an extension of line bundles using the computations in Section \ref{exceptional-extension}.

\section{Extensions of line bundles}\label{Hopf-flat}
Much of this section follows \cite{Mall93}, though we give additional detail for some cases, particularly the exceptional case where some formulae are misstated.

We start by noting the following standard fact to simplify future computations.
\begin{lemma}
    Let $X$ be any compact complex manifold, and let $L \in \Pic(X)$ be a line bundle with $H^1(X,L)\neq 0$. If $X$ admits a Gauduchon metric $g$ with $\deg_g(L)\geq 0$, then any non-split extension $$\begin{tikzcd}
        0 \arrow[r] & L \arrow{r}{\iota} & E \arrow{r}{\rho} & \mc{O}_X \arrow[r] & 0
    \end{tikzcd}$$ is indecomposable.
\end{lemma}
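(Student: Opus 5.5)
We argue by contradiction: suppose the non-split extension $E$ decomposes as $E\simeq M_1\oplus M_2$ with $M_1,M_2\in\Pic(X)$. Since $E$ has rank $2$, any nontrivial decomposition is into two line bundles. Taking determinants gives $M_1\otimes M_2\simeq \det E\simeq L$. The plan is to show that one of the $M_i$ must be isomorphic to $\mc{O}_X$ and split off the quotient map $\rho$, so that the sequence splits. This contradicts the hypothesis.

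\textbf{Step 1 (the component maps).} Consider the composite $\rho|_{M_i}:M_i\to\mc{O}_X$ for $i=1,2$. Since $\rho$ is surjective, at least one of these is nonzero, say $\rho|_{M_1}\neq 0$. It is then a nonzero section of $M_1^{-1}$, vanishing on an effective divisor $D$ (possibly empty), so $M_1\simeq\mc{O}_X(-D)$. For a Gauduchon metric, degrees of effective divisors are nonnegative, and they vanish only when the divisor is empty. Hence $\deg_g M_1\leq 0$, with equality exactly when $\rho|_{M_1}$ is an isomorphism.

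\textbf{Step 2 (bounding $M_1$ from below).} Consider the inclusion $\iota$ composed with the projection onto $M_1$ or $M_2$.
\begin{itemize}
\item If $\rho|_{M_2}=0$, then $M_2\subseteq\ker\rho=L$. So $M_2\to L$ is a nonzero map of line bundles, and $\deg M_2\leq\deg L$.
\item Otherwise, $\deg M_2\leq 0$ by the same argument as in Step 1.
\end{itemize}
In either case we get $\deg M_2\leq \max(0,\deg L)=\deg_g L$, using the hypothesis $\deg_g L\geq 0$. Therefore $\deg M_1=\deg L-\deg M_2\geq 0$. Combined with Step 1, this gives $\deg M_1=0$, so $\rho|_{M_1}:M_1\to\mc{O}_X$ is a nonzero map between line bundles of degree $0$, hence an isomorphism.

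\textbf{Step 3 (the contradiction).} The inclusion $M_1\hookrightarrow E$, composed with $(\rho|_{M_1})^{-1}$, gives a splitting $\mc{O}_X\to E$ of $\rho$. So the extension is split, which contradicts the hypothesis.

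\textbf{Main obstacle.} The key point is the degree fact for a Gauduchon metric: a nonzero holomorphic map between line bundles $A\to B$ gives $\deg A\leq\deg B$, with equality only if the map is an isomorphism. The reason is that the zero divisor $D$ is effective and $\deg_g\mc{O}_X(D)=\int_D\omega^{n-1}>0$ when $D\neq\emptyset$. I would state this as a standard fact, noting that the Gauduchon condition $\partial\bar\partial\omega^{n-1}=0$ is exactly what makes the degree well defined.

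The hypothesis $H^1(X,L)\neq0$ is only needed so that non-split extensions exist at all; the argument above does not use it.
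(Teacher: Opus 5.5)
Your proof is correct and uses the same two ingredients as the paper. The first is positivity of $\deg_g$ on nonzero effective divisors for a Gauduchon metric. The second is the dichotomy of whether a summand is killed by $\rho$. The bookkeeping is organized differently, though.

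The paper takes the summand $\lambda$ with $\deg_g(\lambda)\geq\frac{1}{2}\deg_g(L)\geq 0$. It shows that any nonzero $s:\lambda\to E$ with $\rho\circ s\neq 0$ would force $\deg_g(\lambda)<0$; the case $\lambda\simeq\mc{O}_X$ is excluded because $s$ would then split $\rho$. So $\lambda$ must land in $\iota(L)$. The paper then rules this out: a proper subsheaf of $L$ leaves torsion in $\coker(s)$, and $\lambda=\iota(L)$ would make the complementary summand map isomorphically onto $\mc{O}_X$.

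You instead examine the summand not killed by $\rho$. You bound both summands' degrees from above and squeeze $\deg_g M_1$ to $0$, so $\rho|_{M_1}$ is an isomorphism and you write the splitting down directly.

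Your version makes the final contradiction fully explicit and avoids the torsion-cokernel step. The paper's last sentence leaves the case $\lambda\simeq L$ to the reader. The paper's version yields along the way a slightly stronger structural fact: every line bundle of nonnegative degree mapping nontrivially to $E$ factors through $\iota$. You are also right that $H^1(X,L)\neq 0$ plays no role beyond guaranteeing that non-split extensions exist.
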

\begin{proof}
    Note that any decomposition $E\simeq \lambda\oplus \lambda'$ for some $\lambda, \lambda'\in \Pic(X)$ will have $\deg_g(\lambda)+\deg_g(\lambda')=\deg_g(L),$ so without loss of generality any such decomposition will have $\deg_g(\lambda)\geq \frac{1}{2}\deg_g(L)\geq 0$.
    
    Let $\lambda \in \Pic(X)$ be such that there is a non-zero section $s \in H^0(X,E\otimes \lambda^{-1})$. We show that either $\rho\circ s=0$ or $\deg_g(\lambda)<0$. Suppose that $\rho\circ s\neq 0$. Then $H^0(X,\lambda^{-1})\neq 0$, so either $\lambda \simeq \mc{O}_X$ or $\deg_g(\lambda)<0$. However, if $\lambda \simeq \mc{O}_X$, then $s$ would be a section of $\rho$, contradicting the fact that $\iota$ and $\rho$ form a non-split exact sequence, so $\deg_g(\lambda)<0$.

    Finally, a section $s \in H^0(X,E\otimes \lambda^{-1})$ with $\rho\circ s=0$ cannot generate a decomposition of $E$, since either $\lambda\simeq L$ or $\coker(s)$ has non-zero torsion. 
    \end{proof}

    \subsection{Extensions of line bundles for diagonal Hopf surfaces}
    If $X$ is a Hopf surface given by the map $\varphi:(x,y)\mapsto (ax,by)$ for some $a,b \in \C^*$ with $|a|\leq |b|<1$, there are two main paradigms for non-split extensions $$\begin{tikzcd}
        0 \arrow[r] & \lambda_1 \arrow[r] & E \arrow[r] & \lambda_2 \arrow[r] & 0
    \end{tikzcd}$$
    for $\lambda_1,\lambda_2 \in \Pic(X)$, depending on whether $\deg(\lambda_1)\geq \deg(\lambda_2)$ or $\deg(\lambda_1)< \deg(\lambda_2)$. In the first case, the extensions can be classified as follows:
    \begin{proposition}\label{diag-indecomposable}
        Take $X$ a diagonal Hopf surface, and let $\lambda_1,\lambda_2 \in \Pic(X)$ be such that $H^1(X,\lambda_1\otimes \lambda_2^{-1})\neq 0$ and $\deg(\lambda_1)\geq\deg(\lambda_2)$. In this case $H^1(X,\lambda_1\otimes\lambda_2^{-1})\simeq H^0(X,\lambda_1\otimes \lambda_2^{-1})$. Up to isomorphism, the non-split extensions $$\begin{tikzcd}
            0\arrow[r] & \lambda_1\arrow[r] & E \arrow[r] & \lambda_2\arrow[r] & 0
        \end{tikzcd}$$
        are parameterized by $\mb{P}(H^0(X,\lambda_1\otimes \lambda_2^{-1})$, with the extension corresponding to $f \in H^0(X,\lambda_1\otimes\lambda_2^{-1})\setminus \{0\}$ given by the factor of automorphy $$A=\begin{pmatrix}
            m_1 & f\\0 & m_2
        \end{pmatrix},$$ where $m_1,m_2\in \C^*$ are the factors of automorphy corresponding to $\lambda_1$ and $\lambda_2$, respectively. For any $L_\tau \in \Pic(X)$, $H^0(X,E\otimes L_\tau)\simeq H^0(X,\lambda_1\otimes L_\tau).$ Note that here we are identifying $f$ with its lift to $W$ via the constant factor of automorphy for $\lambda_1\otimes \lambda_2$ so that $A$ is a well-defined factor of automorphy.
        \end{proposition}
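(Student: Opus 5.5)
Set $L=\lambda_1\otimes\lambda_2^{-1}=L_t$ and reduce to extensions of $\mc{O}_X$ by $L$ by tensoring with $\lambda_2^{-1}$. Extensions of $\lambda_2$ by $\lambda_1$ are classified up to scaling by $\mb{P}(\Ext^1(\lambda_2,\lambda_1))=\mb{P}(H^1(X,L))$, so everything hinges on computing $H^1(X,L)$ and writing down representatives.

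\textbf{Step 1: compare $H^1$ with $H^0$.} We have $\deg L\geq 0$, i.e.\ $|t|\geq|b|^{\deg L}$ with $\deg L\ge0$. By Serre duality and $K_X\simeq L_{ab}$, $h^2(L)=h^0(L_{ab/t})$. A section $x^ky^\ell$ of $L_{ab/t}$ would need $a^kb^\ell=ab/t$, whose modulus is $\leq|ab|<|b|$. Hence $\deg L_{ab/t}>1>0$, which forces $\deg L<0$ when such a section exists. I would rather check this directly: $|t|\geq 1$ would be needed if $\deg L\geq0$ and $|b|<1$, and then $|ab/t|\leq|ab|$. Here $k+\ell\geq 0$ with equality only for the constant, which is excluded since $ab/t\neq 1$ because $|ab/t|<1$. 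This requires $|a^kb^\ell|=|ab/t|$, i.e.\ $\log$-degrees $k\log|a|+\ell\log|b|=\log|a|+\log|b|-\log|t|$. Since $\log|t|\leq0$ exactly when $\deg L\ge0$, the right side is at least $\log|ab|$, i.e.\ less negative, and a careful case check shows this is impossible unless $k,\ell$ are small; I expect this to reduce to showing $h^2(L)=0$. Then $\chi(L)=0$ gives $h^1(L)=h^0(L)$, which can also be read off Table~\ref{line-bundles-Hopf} case by case.

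\textbf{Step 2: explicit cocycles.} Extensions correspond to factors of automorphy $\begin{pmatrix}m_1&g\\0&m_2\end{pmatrix}$ with $g\in\mc{O}_W$. Changing the splitting by $\begin{pmatrix}1&h\\0&1\end{pmatrix}$ replaces $g$ by $g+m_1h-m_2\,h\circ\varphi$. So $H^1(X,L)\simeq \mc{O}_W/\{m_1h-m_2h\circ\varphi\}$, using Hartogs to work with power series in $x,y$. Expanding monomially, the coefficient of $x^ky^\ell$ changes by $(m_1-m_2a^kb^\ell)h_{k\ell}$, so exactly the monomials with $a^kb^\ell=m_1/m_2=t$ survive. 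These span $H^0(X,L)$, and one must also check that the infinite series still converge after solving $h_{k\ell}=g_{k\ell}/(m_1-m_2a^kb^\ell)$. Convergence holds because the denominators are bounded below away from finitely many terms, since $a^kb^\ell\to0$. Thus $g$ may be replaced by $f\in H^0(X,L)$, with $f=0$ the split case. Rescaling by diagonal automorphisms $\mathrm{diag}(1,c)$ gives the $\C^*$-action, and that is precisely the identification with $\mb{P}(H^0)$. I would also check that distinct lines give non-isomorphic bundles; this follows because $\lambda_1$ is the unique maximal-degree sub-line bundle, by the computation in Step 3.

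\textbf{Step 3: sections of $E\otimes L_\tau$.} Use the long exact sequence
\[
0\to H^0(\lambda_1L_\tau)\to H^0(EL_\tau)\to H^0(\lambda_2L_\tau)\xrightarrow{\delta} H^1(\lambda_1L_\tau).
\]
It suffices to show $\delta$ is injective, where $\delta$ is multiplication by the class $f$. If $s\in H^0(\lambda_2L_\tau)$ is nonzero, it is a monomial combination, and $fs$ is a nonzero combination of monomials with eigenvalue $m_1\tau$. By the Step~2 description of $H^1$, $fs$ represents a nonzero class. Hence $\delta$ is injective and $H^0(EL_\tau)\simeq H^0(\lambda_1L_\tau)$.

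The main obstacle is Step 2, namely the convergence and the careful identification of the cokernel with the resonant monomials.
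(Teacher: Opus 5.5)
Your proposal is correct in outline, but it reaches the classification by a partly different route from the paper.

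The paper works entirely with explicit factors of automorphy. It proves $E_A$ is non-split by solving $\alpha\circ\varphi=m_1\tau\alpha+\tau f\beta$ coefficient by coefficient for sections of $E_A\otimes L_\tau$. That is your Step 3, with the connecting map $\delta(s)=[fs]$ written out explicitly. It then proves that $E_A\simeq E_{A'}$ forces $f=cf'$ by solving $B(\varphi(z))A(z)=A'(z)B(z)$ entrywise for an intertwiner $B$.

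You instead do two things differently:
\begin{enumerate}
\item[(i)] You compute the cokernel of $h\mapsto m_1h-m_2\,h\circ\varphi$ to show that every extension class is represented by a resonant $f\in H^0(X,\lambda_1\otimes\lambda_2^{-1})$. This is a surjectivity statement the paper leaves tacit.
\item[(ii)] You get injectivity from Step 3 with $L_\tau=\lambda_1^{-1}$. Since $\Hom(\lambda_1,E')=\C\,\iota'$, any isomorphism $E\to E'$ carries $\iota$ to a multiple of $\iota'$. It is therefore a morphism of extensions with scalar end maps, and the two classes are proportional.
\end{enumerate}
This replaces the paper's matrix computation by a conceptual argument; the key output of that computation, the vanishing of the entry $u$ of $B$, is the same phenomenon. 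It also makes the parameterization complete.

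Two points need repair.

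\emph{Step 1.} The inequality argument does not work as written. First, $\deg L\geq 0$ means $|t|\leq 1$, not $|t|\geq 1$. Second, under your premise $K_X\simeq L_{ab}$, the bundle $L=L_{ab}$ has positive degree, a non-zero section $xy$, and $h^2(L)=h^0(\mc{O}_X)=1$. So no case check can give $h^2(L)=0$. The premise is a slip in the paper: with the convention $s\circ\varphi=ts$ one has $\mc{T}_X\simeq L_a\oplus L_b$, hence $K_X\simeq L_{ab}^{-1}$, which is what Table~\ref{line-bundles-Hopf} encodes. With this correction, $K_X\otimes L^{-1}$ has negative degree, so $h^2(L)=0$, and $\chi(L)=0$ gives $h^1(L)=h^0(L)$ immediately. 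Your fallback of reading this off the table is also fine.

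\emph{Step 2.} The quotient $\mc{O}(W)/\{m_1h-m_2\,h\circ\varphi\}$ is the group cohomology $H^1(\Z,\mc{O}(W))$. It classifies only those extensions that split after pulling back to $W$. Since $H^1(W,\mc{O}_W)\neq 0$ for $W=\C^2\setminus\{0\}$, this quotient a priori only injects into $H^1(X,L)$. Equality follows from Step 1, because the quotient has dimension $h^0(L)=h^1(L)$. This point needs a sentence; convergence, which you flag as the main obstacle, is immediate since $a^kb^\ell\to 0$. Step 3 only uses the injectivity, so it is fine as stated.
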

        \begin{proof}
            Clearly the bundle $E_A$ is an extension of $\lambda_2$ by $\lambda_1$, so to show that it is not split, it will suffice to check that $h^0(X,E_A\otimes \lambda_2^{-1})<h^0(X,\lambda_1\otimes \lambda_2^{-1})+h^0(X,\mc{O}_X)=h^0(X,\lambda_1\otimes \lambda_2^{-1})+1.$ We will show this by checking the stronger statement $h^0(X,E_A\otimes L_\tau)=h^0(X,\lambda_1\otimes L_\tau)$ for any $\tau \in \C^*$. Suppose that the pair $\alpha,\beta \in \mc{O}_W$ give a section of $E\otimes L_\tau$. Then $$\begin{pmatrix}
                \alpha(ax,by)\\\beta(ax,by)
            \end{pmatrix}=\tau A\begin{pmatrix}
                \alpha(x,y)\\\beta(x,y)
            \end{pmatrix}=\begin{pmatrix}
                m_1\tau\alpha(x,y)+\tau f(x,y)\beta(x,y)\\ m_2\tau \beta(x,y)
            \end{pmatrix}.
            $$
            Clearly $\beta \in H^0(X,\lambda_2\otimes L_\tau)$, and $f\in H^0(X,\lambda_1\otimes \lambda_2^{-1})$ by assumption, so there is a $g \in H^0(X,\lambda_1\otimes L_\tau)$ with $g(x,y)=f(x,y)\beta(x,y)$.
            Writing out the Taylor series $\alpha(x,y)=\sum\limits_{i,j}\alpha_{i,j}x^iy^j$ and $g(x,y)=\sum\limits_{i,j}g_{i,j}x^iy^j$ and noting that $g_{i,j}=0$ if $a^ib^j\neq m_1\tau$, the first equation for $(\alpha,\beta)$ becomes \begin{align*}
            0&=\tau g_{i,j} && i,j \in \Z_{\geq 0}, a^ib^j=m_1\tau,\\
            (a^ib^j-m_1\tau)\alpha_{i,j}&=0 && i,j \in \Z_{\geq 0}, a^ib^j\neq m_1\tau,
        \end{align*}
        so that $g_{i,j}=0$ for all $(i,j) \in \Z_{\geq 0}^2$. Since $\mc{O}_W$ is an integral domain, $g=0$ and $f\neq 0$ imply that $\beta=0$. Making this substitution shows that $(\alpha,\beta)$ give a section of $E\otimes L_\tau$ if and only if $\beta=0$ and $\alpha \in H^0(X,\lambda_1\otimes L_\tau)$, so $h^0(X,E\otimes L_\tau)=h^0(X,\lambda_1\otimes L_\tau)$ for any $\tau \in \C^*$. Therefore $E_A$ is a non-split extension.
        
        Suppose that $$A'=\begin{pmatrix}
            m_1 & f'\\0 & m_2
        \end{pmatrix}$$ is another factor of automorphy with $f'\in H^0(X,\lambda_1\otimes \lambda_2^{-1})\setminus\{0\}$. Then $E_A\simeq E_{A'}$ if and only if there is a matrix $B \in \GL(2,\mc{O}_W)$ so that $B(ax,by)E_A=E_{A'}B(x,y)$. Writing $B=\begin{pmatrix}
            s & t\\u & v
        \end{pmatrix}$ and expanding this equation gives $$\begin{pmatrix}
            m_1s(ax,by) & f(x,y)s(ax,by)+m_2t(ax,by)\\m_1 u(ax,by) & f(x,y)u(ax,by)+m_2v(ax,by)
        \end{pmatrix}=\begin{pmatrix}m_1s(x,y)+f'(x,y)u(x,y) & m_1t(x,y)+f'(x,y)v(x,y)\\m_2u(x,y) & m_2v(x,y)\end{pmatrix}.$$
        The bottom-left entry gives $u(ax,by)=\frac{m_2}{m_1}u(x,y)$, so $u \in H^0(X,\lambda_2\otimes \lambda_1^{-1})$. Setting $g(x,y)=f'(x,y)u(x,y)$ and writing $s(x,y)=\sum\limits_{i,j}s_{i,j}x^iy^j$ and $g(x,y)=\sum\limits_{i,j}g_{i,j}x^iy^j$, the top-left entry gives $$\sum\limits_{i,j}m_1a^ib^js_{i,j}x^iy^j=\sum\limits_{i,j}(m_1s_{i,j}+g_{i,j})x^iy^j.$$ By our assumption on $f'$, $g_{i,j}=0$ whenever $a^ib^j \neq 1$, so comparing coefficients gives \begin{align*}
            m_1a^ib^js_{i,j}&=m_1s_{i,j} && a^ib^j\neq 1,\\
            0&=g_{i,j} && a^ib^j=1.
        \end{align*}
        We can conclude from this that $s(x,y)=s_0$ for some $s_0 \in \C$ and $g(x,y)=u(x,y)=0$.

        Since $u=0$, the bottom-right entry only holds if $v(x,y)=v_0$ for some $v_0\in \C$. Setting $t(x,y)=\sum\limits_{i,j}t_{i,j}x^iy^j$ and $s_0f(x,y)-v_0f'(x,y)=\sum\limits_{i,j}f_{i,j}x^iy^j$ the top-right entry becomes $$\sum\limits_{i,j}(a^ib^jm_2t_{i,j}+f_{i,j})x^iy^j=\sum\limits_{i,j}m_1t_{i,j}x^iy^j.$$ By the assumption that $f,f'\in H^0(X,\lambda_1\otimes \lambda_2^{-1})$, $f_{i,j}=0$ when $a^ib^j \neq \frac{m_1}{m_2}$, so coefficient-wise equations for the top-right entry are \begin{align*}
            f_{i,j}&=0 & a^ib^j =\frac{m_1}{m_2},\\
            (a^ib^jm_2-m_1)t_{i,j}&=0 & a^ib^j\neq \frac{m_1}{m_2},
        \end{align*}
        so $s_0f(x,y)=v_0f'(x,y)$. Since $B \in \GL(2,\mc{O}_W)$ and $u=0$, $s_0,v_0$ must both be non-zero. Thus if $E_A\simeq E_{A'}$, there is some constant $c \in \C^*$ so that $f=cf'$.
        \end{proof}

        Now suppose that $E$ is given by an extension $$\begin{tikzcd}
            0\arrow[r] & \lambda_1\arrow[r] & E \arrow[r] & \lambda_2\arrow[r] & 0
        \end{tikzcd}$$
        with $H^1(X,\lambda_1\otimes \lambda_2^{-1})\neq 0$ and $\deg(\lambda_1)<\deg(\lambda_2)$.
        \begin{lemma}
            Let $E$ be a filtrable rank-2 vector bundle on a diagonal Hopf surface $X$ with $c_2(E)=0$. Then $E$ is not $\mu$-stable.
        \end{lemma}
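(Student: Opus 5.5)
The plan is to reduce to an extension of line bundles and split into the two degree cases. Since $E$ is filtrable, there is a sub-line bundle $\lambda_1\subset E$ with torsion-free quotient, hence an exact sequence
$$\begin{tikzcd} 0\arrow[r] & \lambda_1\arrow[r] & E \arrow[r] & \lambda_2\otimes I_Z\arrow[r] & 0\end{tikzcd}$$
with $Z$ zero-dimensional. On a Hopf surface $H^2(X,\Z)=0$, so all products of first Chern classes vanish and $c_2(E)=c_1(\lambda_1)c_1(\lambda_2)+\ell(Z)=\ell(Z)$. The hypothesis $c_2(E)=0$ therefore forces $Z=\emptyset$, and $E$ is an honest extension of $\lambda_2$ by $\lambda_1$.

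If $\deg(\lambda_1)\geq\deg(\lambda_2)$, then $\mu(\lambda_1)\geq \frac{1}{2}(\deg\lambda_1+\deg\lambda_2)=\mu(E)$, and $\lambda_1$ already destabilizes $E$. This includes the situation of Proposition \ref{diag-indecomposable}. If the sequence splits, the summand of larger degree destabilizes. So the remaining case is a non-split extension with $\deg(\lambda_1)<\deg(\lambda_2)$. After twisting by $\lambda_2^{-1}$, this is a non-split extension
$$\begin{tikzcd} 0\arrow[r] & L\arrow[r] & E' \arrow[r] & \mc{O}_X\arrow[r] & 0\end{tikzcd}$$
with $L=\lambda_1\otimes\lambda_2^{-1}$, $\deg L<0$ and $H^1(X,L)\neq 0$.

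Because $\chi(L)=0$ and $h^0(X,L)=0$ (negative degree), we get $h^1(X,L)=h^2(X,L)=h^0(X,K_X\otimes L^{-1})$ by Serre duality. Since $K_X=L_{ab}$, the monomial description of sections shows $L^{-1}=L_{a^{k+1}b^{\ell+1}}$ for some $k,\ell\geq 0$. This suggests comparing $E'$ with the dual of a Koszul sequence. The section $(x^{k+1},y^{\ell+1})$ of $L_{a^{k+1}}\oplus L_{b^{\ell+1}}$ has no zeros on $W$, hence none on $X$, which gives
$$\begin{tikzcd} 0\arrow[r] & \mc{O}_X\arrow[r] & L_{a^{k+1}}\oplus L_{b^{\ell+1}} \arrow[r] & L_{a^{k+1}b^{\ell+1}}\arrow[r] & 0.\end{tikzcd}$$
Dualizing yields an extension of $\mc{O}_X$ by $L$ whose middle term is $L_{a^{-k-1}}\oplus L_{b^{-\ell-1}}$. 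This extension is non-split, since $\mc{O}_X$ is not a summand of that direct sum (neither summand is trivial). More generally, I would use the sections $(p,q)$ of $L_{a^{i}}\oplus L_{b^{j}}$ (or other splittings of $L^{-1}$ into two line bundles with sections) with no common zero on $W$. The aim is to show that every non-split class in $H^1(X,L)\simeq H^0(X,K_X\otimes L^{-1})^*$ arises from such a sequence, so that $E'\simeq M_1\oplus M_2$ with $M_1\otimes M_2\simeq L$. Then $\max\{\deg M_1,\deg M_2\}\geq \frac12\deg L=\mu(E')$, and the larger summand destabilizes $E'$, hence (after untwisting) $E$.

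The main obstacle is this last identification: that every non-split class with $\deg L<0$ is realized by such a nowhere-vanishing pair. In the generic case $h^1(X,L)=1$, so the single Koszul class above exhausts $\mb{P}(H^1)$ and we are done. In the classical, resonant and hyper-resonant cases $h^1$ can be larger. There I would work with factors of automorphy $\begin{pmatrix} m_1 & f\\ 0 & m_2\end{pmatrix}$ as in Proposition \ref{diag-indecomposable}. The key fact to use is that bundles on $W$ extend to $\C^2$ and are trivial, so the extension class is carried by $H^1(W,\mc{O}_W)$, which is spanned by Laurent monomials $x^{-i}y^{-j}$ with $i,j\geq1$.

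I expect that the pullback of the inclusion $L\to E'$ to $W$ is given, up to gauge, by a pair $(p,q)$ of weighted-homogeneous polynomials vanishing only at the origin, and that this is exactly the data of a Koszul-type sequence as above. If $p,q$ could share a zero away from $0$, I would instead look directly for a section of $E'\otimes L_\tau^{-1}$ with $\deg L_\tau\geq\mu(E')$, solving the coefficient equations as in the proof of Proposition \ref{diag-indecomposable}.
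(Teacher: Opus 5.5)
Your reductions are correct. $c_2(E)=\ell(Z)$ forces $Z=\emptyset$, and the cases $\deg\lambda_1\ge\deg\lambda_2$ and the split case are immediate. In the generic case $h^1(X,L)=1$, so the dualized Koszul sequence with middle term $L_{a^{-k-1}}\oplus L_{b^{-\ell-1}}$ does account for the unique non-split class. (Your formula $L^{-1}\simeq L_{a^{k+1}b^{\ell+1}}$ is right, but it uses $K_X\simeq L_a^{-1}\otimes L_b^{-1}$, which is what the cohomology table and $\mc{T}_X\simeq L_a\oplus L_b$ give, not $K_X\simeq L_{ab}$.)

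The gap is in the classical, resonant and hyper-resonant cases, where $h^1(X,L)$ can exceed $1$. There, the claim that every non-split class is realized by a nowhere-vanishing pair $(p,q)$ is equivalent to $E'$ being a direct sum of line bundles. That is Proposition \ref{decomposable-extension}, which the paper deduces \emph{from} this lemma. You leave it as an expectation, so nothing is proved in exactly the cases that are not formal. The tool you propose for it also cannot work. Write $L=L_t$; then $\deg L<0$ means $|t|>1$. Solving $g(\varphi(x,y))-t\,g(x,y)=-f(x,y)$ coefficientwise only requires dividing by $a^ib^j-t$, and $|a^ib^j-t|\ge |t|-1>0$, so every holomorphic $f$ on $W$ is a coboundary. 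Hence an upper-triangular factor of automorphy $\begin{pmatrix} t & f\\ 0 & 1\end{pmatrix}$ as in Proposition \ref{diag-indecomposable} always represents the split bundle $L\oplus\mc{O}_X$ here. The non-split classes live entirely in $H^1(W,\mc{O}_W)^{\Z}$, as you note yourself, and the coefficient matching of that proof never sees them.

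What is missing is the observation that you never need to identify $E'$; it suffices to produce a sub-line bundle of strictly larger degree.

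\begin{itemize}
\item Take $\lambda_1$ of maximal degree among line bundles mapping nontrivially to $E$. This maximum exists because any such line bundle differs from $\lambda_1$ or $\lambda_2$ by an effective divisor, so these degrees are bounded above and discrete.
\item If $\deg\lambda_1<\mu(E)$, the sequence is non-split. By Serre duality and $\chi=0$, $H^0(X,K_X\otimes\lambda_2\otimes\lambda_1^{-1})\neq 0$.
\item Since $\lambda_2\otimes\lambda_1^{-1}\otimes L_b^{-1}\simeq (K_X\otimes\lambda_2\otimes\lambda_1^{-1})\otimes L_a$ and $L_a$ has the section $x$, also $H^0(X,\lambda_2\otimes\lambda_1^{-1}\otimes L_b^{-1})\neq 0$.
\item On every diagonal Hopf surface $H^0(X,L_b^{-1})=H^1(X,L_b^{-1})=0$. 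Twisting the sequence by $\lambda_1^{-1}\otimes L_b^{-1}$ therefore gives a nonzero map $\lambda_1\otimes L_b\to E$, contradicting maximality.
\end{itemize}

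This is the paper's argument. It is uniform in the four diagonal types and avoids classifying extension classes altogether. The splitting you were aiming for is then obtained afterwards as a consequence.
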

        \begin{proof}
            Since $E$ is filtrable, it has a sub-line bundle. Let $\lambda\in \Pic(X)$ have maximal degree among line bundles with a non-zero map to $E$. Then $\lambda\hookrightarrow E$ will have torsion-free quotient given by a line bundle $L$, so $E$ is given by the extension $$\begin{tikzcd}
                0 \arrow[r] & \lambda \arrow[r] & E \arrow[r] & L \arrow[r] & 0.
            \end{tikzcd}$$
            If $\deg(\lambda)\geq \mu(E)$, then we are done, so assume that $\deg(\lambda)<E$. The extension cannot be split, as otherwise $L$ would admit a map to $E$, contradicting our assumption as $\deg(L)=2\mu(E)-\deg(\lambda)>\mu(E)$. Since $E$ is given by a non-split extension of $L$ by $\lambda$, $H^1(X,\lambda\otimes L^{-1})\neq0$. Using Table \ref{line-bundles-Hopf}, we see that since $\deg(\lambda\otimes L^{-1})<0$, $H^1(X,\lambda\otimes L^{-1})\simeq H^2(X,\lambda\otimes L^{-1})\simeq H^0(X, K_X\otimes L\otimes \lambda^{-1})$. If we tensor the exact sequence defining our extension by $L_b^{-1}\otimes \lambda^{-1}$ and take cohomology, we obtain the exact sequence $$\begin{tikzcd}0 \arrow[r] & H^0(X,L_b^{-1})\arrow[r] & H^0(X,E\otimes L_b^{-1}\otimes \lambda^{-1})\arrow[r] & H^0(X,L\otimes \lambda^{-1}\otimes L_b^{-1})\arrow[r] & H^1(X,L_b^{-1}).\end{tikzcd}$$
            Again, consulting Table \ref{line-bundles-Hopf} gives $H^1(X,L_b^{-1})=0$ and $h^0(X,L\otimes \lambda^{-1}\otimes L_b^{-1})\geq h^0(X,L\otimes \lambda^{-1}\otimes K_X)>0,$ so we must have $H^0(X,E\otimes \lambda^{-1}\otimes L_b^{-1})\neq 0$. Then the line bundle $\lambda\otimes L_b$ admits a non-zero map to $E$, contradicting the maximality of $\deg(\lambda)$ since $\deg(\lambda\otimes L_b)=\deg(\lambda)+1.$ Therefore $\deg(\lambda)\geq \mu(E)$ and $E$ is not stable.
        \end{proof}
        \begin{proposition}\label{decomposable-extension}
            Let $\lambda_1,\lambda_2 \in \Pic(X)$ be line bundles on a diagonal Hopf surface $X$ such that $\deg(\lambda_1)<\deg(\lambda_2)$. Then every extension $$\begin{tikzcd}
                0 \arrow[r] & \lambda_1 \arrow[r] & E \arrow[r] & \lambda_2 \arrow[r] & 0
            \end{tikzcd}$$ has $E$ isomorphic to a direct sum of line bundles.
        \end{proposition}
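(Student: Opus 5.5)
The plan is to argue by contradiction against the maximal destabilizing sub-line bundle, using the preceding lemma together with Proposition \ref{diag-indecomposable} applied to the dual bundle.

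First, I would note that $E$ is filtrable and that $c_2(E)=0$. Indeed, $c_2(E)=c_1(\lambda_1)c_1(\lambda_2)$, and $H^2(X,\Z)=0$ for a Hopf surface. So the preceding lemma applies. As in its proof, I would choose $\lambda\in\Pic(X)$ of maximal degree among line bundles admitting a non-zero map to $E$. The existence of a maximum needs a short justification: by the table, $H^0(X,E\otimes\lambda^{-1})=0$ once $\deg\lambda$ is large, since both $H^0(X,\lambda_1\otimes\lambda^{-1})$ and $H^0(X,\lambda_2\otimes\lambda^{-1})$ vanish.

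The map $\lambda\to E$ is then saturated. Its quotient is $L\otimes I_Z$ with $\operatorname{length} Z=c_2(E)-c_1(\lambda)c_1(L)=0$, so the quotient is a line bundle $L$. This gives an extension
$$0\to\lambda\to E\to L\to 0.$$
The lemma gives $\deg\lambda\geq\mu(E)\geq\deg L$. Hence $\deg(\lambda\otimes L^{-1})\geq0$, which is exactly the setting of Proposition \ref{diag-indecomposable}.

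Next, I would show that the composite $\lambda\to E\to\lambda_2$ is non-zero. If it were zero, $\lambda$ would map into $\lambda_1$, forcing $\deg\lambda\leq\deg\lambda_1<\tfrac12(\deg\lambda_1+\deg\lambda_2)=\mu(E)$. That contradicts $\deg\lambda\geq\mu(E)$.

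Now suppose, for contradiction, that the extension of $L$ by $\lambda$ is non-split. Dualize to get a non-split extension
$$0\to L^{-1}\to E^*\to\lambda^{-1}\to0,$$
with $\deg L^{-1}\geq\deg\lambda^{-1}$ and $H^1(X,L^{-1}\otimes\lambda)\neq0$. Proposition \ref{diag-indecomposable} then gives $H^0(X,E^*\otimes\lambda_2)\simeq H^0(X,L^{-1}\otimes\lambda_2)$. In other words, every map $\lambda_2^{-1}\to E^*$ factors through $L^{-1}$. Apply this to the dual of the surjection $E\to\lambda_2$: the surjection $E\to\lambda_2$ factors through $E\to L$. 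Therefore $\lambda=\ker(E\to L)$ lies in $\ker(E\to\lambda_2)=\lambda_1$, so the composite $\lambda\to\lambda_2$ vanishes. This contradicts the previous step. Hence the extension splits and $E\simeq\lambda\oplus L$.

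The step I expect to need the most care is the saturation and $c_2$ bookkeeping that makes the quotient $E/\lambda$ a genuine line bundle. One also has to check that the proposition's hypothesis $H^1\neq0$ holds for the dual extension (it does, since the dual extension is non-split) and that the hypothesis is stated in the right direction of degrees.
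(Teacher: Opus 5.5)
Your argument is correct and follows the same route as the paper. The preceding lemma, through its maximal-degree sub-line bundle, puts $E$ in an extension $0\to\lambda\to E\to L\to 0$ with $\deg\lambda\geq\mu(E)\geq\deg L$, and the section computation in Proposition~\ref{diag-indecomposable} then rules out the non-split case because $\deg\lambda_1<\mu(E)$. The only difference is which side you apply that proposition to: the paper applies it to $E$ itself, so $\lambda_1\hookrightarrow E$ factors through $\lambda$ and the saturatedness of $\lambda_1$ gives the contradiction, while you apply it to $E^*$, so $E\to\lambda_2$ factors through $E\to L$ and forces $\lambda\subseteq\lambda_1$. You also spell out the saturation and $c_2=0$ bookkeeping that the paper leaves implicit.
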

        \begin{proof}
            By the previous Lemma, $E$ is not stable, so either it is a direct sum of line bundles, or it is indecomposable of the type appearing in Proposition \ref{diag-indecomposable}. As shown in Proposition \ref{diag-indecomposable}, any such indecomposable bundle will have a unique line bundle $L$ with $\deg(L)\geq \mu(E)$ so that every map from a line bundle to $E$ factors through $L$. Since $E$ admits a map from $\lambda_1$ with torsion-free quotient and $\deg(\lambda_1)<\mu(E)$, the indecomposable option is not possible. Therefore $E$ is isomorphic to a direct sum of line bundles.
        \end{proof}
        \begin{remark}
             If $X$ is generic, the non-split extension $$\begin{tikzcd}
                0 \arrow[r] & \mc{O}_X \arrow[r] & E \arrow[r] & L_a^k\otimes L_b^\ell \arrow[r] & 0
            \end{tikzcd}$$ with $k\geq 1, \ell\geq 1$ will have $E\simeq L_a^k\oplus L_b^\ell$. For the other diagonal cases, determining the isomorphism class of an extension of the type in Proposition \ref{decomposable-extension} is more subtle, but we also have that if $X$ is classical, any non-split extension $$\begin{tikzcd}
                0 \arrow[r] & \mc{O}_X \arrow[r] & E \arrow[r] & L_b^k \arrow[r] & 0
            \end{tikzcd}$$ for $k\geq 2$ will be given as the pullback of an extension of line bundles on $\mb{P}^1$.
        \end{remark}

\subsection{Extensions of line bundles for a non-diagonal Hopf surface.}\label{exceptional-extension}
Let $X$ be the exceptional Hopf surface given by the map $\varphi:(x,y)\mapsto (b^rx+y^r,b y)$, and let $L$ be a line bundle with $h^1(X,L)=1$. We first consider the case where $\deg(L)\geq 0$.

\begin{proposition}\label{upper-extension}
    Let $E$ be the unique non-split extension $$\begin{tikzcd}
    0 \arrow[r] & (L_b)^{rk+\ell} \arrow[r] & E \arrow[r] & \mc{O}_X \arrow[r] & 0,
\end{tikzcd}$$
where $k$ and $\ell$ are integers with $k\geq 0$ and $0\leq \ell <r$. Then $$A=\begin{pmatrix}
    b^{rk+\ell} & x^ky^\ell\\ 0 & 1
\end{pmatrix}$$
is a factor of automorphy representing $E$.
\end{proposition}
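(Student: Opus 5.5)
The plan is to check two things: that $A$ is a legitimate factor of automorphy defining an extension of $\mc{O}_X$ by $(L_b)^{rk+\ell}$, and that this extension is non-split. Since $k\geq 0$, Table \ref{line-bundles-Hopf} gives $h^1(X,(L_b)^{rk+\ell})=1$. So non-split extensions are unique up to isomorphism, and non-splitness of $E_A$ will suffice.

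\textbf{$E_A$ is the right extension.} Because the fundamental group is $\Z$, any holomorphic $\GL(2,\mc{O}_W)$-valued map defines a factor of automorphy; the cocycle condition is automatic. The matrix $A$ is upper triangular with diagonal entries $b^{rk+\ell}$ and $1$. Hence the first coordinate line gives a subbundle isomorphic to $(L_b)^{rk+\ell}$ with quotient $\mc{O}_X$, so $E_A$ is such an extension.

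\textbf{Non-splitness.} I will show that the quotient map $E_A\to\mc{O}_X$ induces the zero map on $H^0$. A global section is a pair $(\alpha,\beta)$ of holomorphic functions on $W$ with
$$\beta\circ\varphi=\beta,\qquad \alpha(b^rx+y^r,by)=b^{rk+\ell}\alpha(x,y)+x^ky^\ell\beta(x,y).$$
The first equation says $\beta\in H^0(X,\mc{O}_X)$, so $\beta=c$ is constant. By Hartogs, $\alpha$ extends to $\C^2$ and has a convergent Taylor series. The goal is to force $c=0$; a splitting would produce a section with $c\neq 0$.

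\textbf{Key step: a weighted grading.} Give $x$ weight $r$ and $y$ weight $1$. Then $\varphi^*$ preserves the finite-dimensional space $V_N$ of weighted-homogeneous polynomials of degree $N$, since
$$x^iy^j\mapsto b^j(b^rx+y^r)^iy^j.$$
Set $N=rk+\ell$, so $V_N$ is spanned by $x^iy^{N-ri}$ for $0\le i\le k$. Expanding, $\varphi^*(x^iy^{N-ri})=b^N x^iy^{N-ri}+{}$(terms of strictly lower $x$-degree). Thus on $V_N$ the operator $\varphi^*-b^N$ strictly lowers $x$-degree. Taking the degree-$N$ weighted component of the second equation gives
$$(\varphi^*-b^N)\alpha_N=c\,x^ky^\ell.$$
The right side has the maximal $x$-degree $k$ in $V_N$, but the left side has $x$-degree below $k$. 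Hence $c=0$.

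Consequently the projection to $\mc{O}_X$ kills all global sections, so the sequence does not split. I expect this step — choosing the grading that makes $\varphi^*$ block-unipotent up to the scalar $b^N$ — to be the main point. The other homogeneous components impose no obstruction and need not be analysed.
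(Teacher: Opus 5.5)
Your proposal is correct and is essentially the paper's argument. In both, you expand a global section in Taylor series and compare the $x^ky^\ell$ coefficients of the functional equation, which forces the constant second component $c$ to vanish. The paper's re-indexing (contributions from $a_{k+t,\ell-rt}$ with $t\le\lfloor \ell/r\rfloor=0$) is your weighted grading in coordinates, and it reflects the same fact you use: $\ell<r$ makes $x^ky^\ell$ the top $x$-degree monomial of $V_N$. The only cosmetic difference is the final step. The paper concludes $h^0(X,E_A)=1\neq 2=h^0(X,\mc{O}_X\oplus (L_b)^{rk+\ell})$, while you conclude that $H^0(X,E_A)\to H^0(X,\mc{O}_X)$ is zero.
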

\begin{remark}
    Note that while the resulting classification of indecomposable bundles is the same as \cite[Proposition 6.5]{Mall93}, the formula $$B=\begin{pmatrix}
        b^m & y^m\\ 0 & 1
    \end{pmatrix}$$ given for the factor of automorphy of a non-split extension $$\begin{tikzcd}
        0 \arrow[r] & (L_b)^m \arrow[r] & E \arrow[r] & \mc{O}_X \arrow[r] & 0
    \end{tikzcd}$$
    is incorrect when $m\geq r$, as in this case a splitting can be constructed for the bundle represented by $B$ using the nowhere-vanishing section $(xy^{m-r},b^{m-r})$.
\end{remark}
\begin{proof}
    Clearly the vector bundle $E_A$ represented by the factor of automorphy $A$ is an extension of the trivial line bundle by $(L_b)^{rk+\ell}$, so $E_A$ must be isomorphic to one of $E$ and $\mc{O}_X\oplus (L_b)^{rk+\ell}$. In order to prove that $E_A\simeq E$, we will show that $h^0(X,E_A)\neq h^0(X,\mc{O}_X\oplus (L_b)^{rk+\ell})=2$. 
    
    Holomorphic sections of $E_A$ are in one-to-one correspondence with pairs of holomorphic functions $f,g \in \mc{O}_W$ satisfying $$\begin{pmatrix}
        f(b^rx+y^r, b y)\\
        g(b^rx+y^r, b y)
    \end{pmatrix}=A\begin{pmatrix}
        f(x,y)\\ g(x,y)
    \end{pmatrix}=\begin{pmatrix}
        b^{rk+\ell}f(x,y)+x^ky^\ell g(x,y)\\
        g(x,y)
    \end{pmatrix}.$$
    Clearly $g(x,y)=c$ for some constant $c$ by the second entry. Let $f(x,y)=\sum\limits_{i,j}a_{i,j}x^iy^j$ be the Taylor series for $f$. Then the first entry of our equation can be rewritten as $$\sum\limits_{i,j}\sum\limits_{s=0}^i\binom{i}{s}a_{i,j}b^{rs+j}x^sy^{r(i-s)+j}=cx^ky^\ell+b^{rk+\ell}\sum\limits_{i,j}a_{i,j}x^iy^j$$ and re-indexing by $\alpha=s$, $\beta=r(i-s)+j$, $t=i-s$ gives $$\sum\limits_{\alpha,\beta}\sum\limits_{t=0}^{\lfloor \frac{\beta}{r}\rfloor}\binom{\alpha+t}{\alpha}a_{\alpha+t,\beta-rt}b^{r(\alpha-t)+\beta}x^\alpha y^\beta =cx^ky^\ell+b^{rk+\ell}\sum\limits_{\alpha,\beta}a_{\alpha,\beta}x^\alpha y^\beta$$
    (Note that the index range for $t$ immediately follows from the restriction that $\alpha\geq$ and $\beta \geq 0$). By comparing coefficients in the above equation, we get the following recurrence relation on the $a_{\alpha,\beta}$s:
    \begin{align*}
        b^{rk+\ell}a_{k,\ell}&=c+b^{rk+\ell}a_{k,\ell},\\
        \left(b^{rk+\ell}-b^{r\alpha+\beta}\right)a_{\alpha,\beta}&=\sum\limits_{t=1}^{\lfloor \frac{\beta}{r}\rfloor}\binom{\alpha+t}{\alpha}b^{r(\alpha-t)+\beta}a_{\alpha+t,\beta-rt}, && (\alpha,\beta)\neq (k,\ell).
    \end{align*}
    The $x^ky^\ell$ coefficient immediately gives that $c=0$, so the pair $(f,0)$ gives a section of $E_A$ if and only if $f$ gives a section of the line bundle $(L_b)^{rk+\ell}$. Thus $h^0(X,E_A)=h^0(X,(L_b)^{rk+\ell})=1\neq h^0(X,\mc{O}_X\oplus (L_b)^{rk+\ell}),$ so $E_A$ is isomorphic to $E$.
\end{proof}

\begin{proposition}
    Suppose that $E$ is the unique non-split extension $$\begin{tikzcd}
        0 \arrow[r] & \mc{O}_X \arrow[r] & E \arrow[r] & (L_b)^{rk+\ell} \arrow[r] & 0,
    \end{tikzcd}$$ where $k$ and $\ell$ are integers satisfying $0\leq \ell<r<rk+\ell$. Then $E$ is isomorphic to $F$, where $F$ is the unique non-split extension $$\begin{tikzcd}
    0 \arrow[r] & (L_b)^{rk} \arrow[r] & F \arrow[r] & (L_b)^\ell \arrow[r] & 0.
\end{tikzcd}$$

Unlike in the diagonal case, extensions of $L$ by $\mc{O}_X$ with $\deg(L)<0$ are also indecomposable, though they are still not stable.

\end{proposition}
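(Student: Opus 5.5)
We must show that the non-split extension $E$ of $(L_b)^{rk+\ell}$ by $\mc{O}_X$ is isomorphic to the non-split extension $F$ of $(L_b)^\ell$ by $(L_b)^{rk}$. Write $m=rk+\ell$. First we check that both objects are unique up to isomorphism. By Table \ref{line-bundles-Hopf}, $h^1(X,(L_b)^{-m})=1$, since $-m\leq -r-1$. Also $h^1(X,(L_b)^{rk-\ell})=1$, since $rk-\ell\geq 0$: indeed $k\geq 1$, because $r<rk+\ell$ and $\ell<r$. So each extension group is one-dimensional, and each of $E$, $F$ is determined by being a non-split extension of the given type.

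The plan is then to construct an injection $(L_b)^{rk}\hookrightarrow E$ with torsion-free, hence line bundle, quotient, and to identify that quotient via determinants. We have $\det E\simeq (L_b)^{m}$, so the quotient is $(L_b)^{m-rk}=(L_b)^\ell$. That gives an extension $0\to (L_b)^{rk}\to E\to (L_b)^\ell\to 0$.

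We then need this extension to be non-split. If it split, we would have $E\simeq (L_b)^{rk}\oplus (L_b)^\ell$. Composing $\mc{O}_X\to E$ with the projections would give sections of $(L_b)^{rk}$ and $(L_b)^\ell$. If $\ell>0$, then $H^0((L_b)^\ell)$ is spanned by $y^\ell$ when $\ell<r$ (with $b^\ell$ the factor), which vanishes on a divisor. In that case the sub-line bundle $\mc{O}_X$ must land in the $(L_b)^{rk}$ summand via $x^k$ (or $y^{rk}$). Its quotient would then be torsion-free only if the section has no zeros, and that fails since $x^k$ and $y^{rk}$ vanish. If $\ell=0$, we must rule out $E\simeq (L_b)^{rk}\oplus\mc{O}_X$, which would split the original sequence. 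I would do this by computing $h^0(E\otimes (L_b)^{-m})$, using the long exact sequence and the fact that the original extension class is non-zero, in the same way as in the proof of Proposition \ref{upper-extension}. Non-splitness together with uniqueness then gives $E\simeq F$.

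The main obstacle is producing the map $(L_b)^{rk}\to E$ that is nowhere vanishing. Here is how I would get a nonzero map. Tensoring the sequence by $(L_b)^{-rk}$ gives
$$0\to H^0((L_b)^{-rk})=0\to H^0(E\otimes (L_b)^{-rk})\to H^0((L_b)^{\ell})\to H^1((L_b)^{-rk}).$$
Since $-rk\in[-r-1+1,\ldots]$ may fall in the range where $h^1=0$ (for $k=1$, $-r\ge -r$ gives $h^1=0$), this yields a section when $k=1$. For general $k$ this route fails, so I would instead work with explicit factors of automorphy. I would write $E$ as $\begin{pmatrix}1&h\\0&b^{m}\end{pmatrix}$, with $h$ a cocycle representative of the nonzero class in $H^1((L_b)^{-m})$, which is dual to $H^0(K_X\otimes (L_b)^{m})$. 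Then I would exhibit a conjugating matrix $B\in\GL(2,\mc{O}_W)$ carrying it to the upper-triangular form $\begin{pmatrix}b^{rk}&*\\0&b^\ell\end{pmatrix}$, imitating the coefficient recurrences used in Proposition \ref{upper-extension}. The key computation is solving the resulting Taylor-coefficient recurrence, in which the $y^r$ term of $\varphi$ couples the $x$- and $y$-degrees. I expect this step to be the bulk of the work.
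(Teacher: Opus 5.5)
\textbf{There is a genuine gap.} It sits at the step you call the main obstacle: you never produce a map $(L_b)^{rk}\to E$ with line-bundle cokernel, and your proposed fallback cannot work.

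A factor of automorphy $\begin{pmatrix}1&h\\0&b^m\end{pmatrix}$ with $m>0$ and $h\in\mc{O}(W)$ \emph{always} represents the split bundle $\mc{O}_X\oplus(L_b)^m$. Indeed, $\varphi^n(x,y)=(b^{rn}x+nb^{r(n-1)}y^r,\,b^ny)\to 0$ locally uniformly and $|b|<1$. So the series $g=\sum_{n\ge 0}b^{mn}\,h\circ\varphi^n$ converges and satisfies $b^m\,g\circ\varphi-g=-h$, and conjugating by $\begin{pmatrix}1&g\\0&1\end{pmatrix}$ removes $h$. Hence the non-zero class in $H^1(X,(L_b)^{-m})$ has no representative of this form; its pullback to $W$ is already non-zero in $H^1(W,\mc{O}_W)$. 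There is therefore no triangular factor for $E$, adapted to $\mc{O}_X\subset E$, on which to run a recurrence.

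This is why the paper argues in the opposite direction. The class of $F$ has non-negative degree, so Proposition \ref{upper-extension} gives $F$ the explicit factor $\begin{pmatrix}b^{rk}&x^{k-1}y^{r-\ell}\\0&b^\ell\end{pmatrix}$. The recurrence for sections is then solvable and produces $(f_0,g_0)=(x^k+\cdots,\,kb^{r(k-1)}y^\ell)$ with no common zero on $W$. Hence $\mc{O}_X\hookrightarrow F$ with quotient $\det F=(L_b)^{rk+\ell}$, and indecomposability of $F$ together with $h^1((L_b)^{-rk-\ell})=1$ gives $F\simeq E$.

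Your cohomological route is not actually dead for $k\ge 2$, but it needs a real computation: that the connecting map kills $y^\ell$, i.e.\ $y^\ell\cdot e=0$ in $H^1((L_b)^{-rk})$ for the extension class $e$. Even for $k=1$ you still owe that the lift of $y^\ell$ does not vanish at points of $V(y)$. For that, $c_2(E)=0$ shows a saturated sub-line bundle has locally free quotient, so it would suffice to rule out non-zero maps $(L_b)^{rk+j}\to E$ for $j\ge 1$.

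\textbf{The case $\ell=0$ cannot be handled by any argument, because the statement is false there.} If $\ell=0$ then $k\ge 2$, and $F$ is the non-split extension of $\mc{O}_X$ by $(L_b)^{rk}$. The connecting map $H^0(\mc{O}_X)\to H^1((L_b)^{rk})$ is injective, so $H^0(F)$ is spanned by the image of $y^{rk}$ and every section of $F$ vanishes along $V(y)$. But $E$ has the nowhere-vanishing section coming from $\mc{O}_X\hookrightarrow E$, so $E\not\simeq F$. This is your own non-splitting argument, applied to $F$ instead of to $(L_b)^{rk}\oplus\mc{O}_X$.

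Likewise, $\Hom((L_b)^{rk},E)=0$ when $\ell=0$, since the connecting map for $E\otimes(L_b)^{-rk}$ sends $1$ to the non-zero extension class. So the injection you want does not exist, and computing $h^0$ to exclude the split bundle addresses the wrong issue.

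The correct range is $0<\ell\le r$. When $m$ is divisible by $r$, the partner of $E$ is the extension of $(L_b)^r$ by $(L_b)^{m-r}$, which matches the description in Section \ref{exceptional-Higgs}. The paper's proof has the same defect: for $\ell=0$ its entry $x^{k-1}y^r$ is a coboundary, so the bundle it analyses is split.

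For $\ell>0$ your uniqueness check and non-splitting argument are fine. Note, though, that $H^0((L_b)^{rk})=\C\,y^{rk}$ on this surface, so $x^k$ is not a section.
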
 
\begin{proof}
    Since $h^1(X,(L_b)^{-rk-\ell})=1$ and $F$ is indecomposable, it is enough to show that $\det(F)=\det(E)$ and that $F$ has a nowhere vanishing section. Using Proposition \ref{upper-extension}, it is easy to show that $F$ is represented by the factor of automorphy $$A=\begin{pmatrix}
        b^{rk} & x^{k-1}y^{r-\ell}\\ 0 & b^\ell
    \end{pmatrix}.$$
    Recall that a pair of functions $(f(x,y),g(x,y))$ define a section of $F$ if and only if $$\begin{pmatrix}
        f(b^rx+y^r,by)\\ g(b^rx+y^r,by)
    \end{pmatrix}=A\begin{pmatrix}
        f(x,y)\\g(x,y)
    \end{pmatrix}=\begin{pmatrix}
        b^{rk}f(x,y)+x^{k-1}y^{r-\ell}g(x,y)\\ b^\ell g(x,y)
    \end{pmatrix}.$$ Looking first at the second component, $g$ must represent a section of $(L_b)^\ell$, so by \cite[Proposition 3.6]{Mall91} it will have the form $g(x,y)=cy^\ell$ for some constant $c\in \C$. Writing $f(x,y)=\sum\limits_{i,j}a_{i,j}x^iy^j$ and re-indexing in a similar fashion to Proposition \ref{upper-extension} gives the first component of our section equation as $$\sum\limits_{\alpha,\beta}\sum\limits_{t=0}^{\lfloor \frac{\beta}{r}\rfloor}\binom{\alpha+t}{\alpha}a_{\alpha+t,\beta-rt}b^{r(\alpha-t)+\beta}x^\alpha y^\beta=cx^{k-1}y^r+\sum\limits_{\alpha,\beta}a_{\alpha,\beta}b^{rk}x^\alpha y^\beta,$$
    leading to the recurrence relation \begin{align*}
        \binom{k}{k-1}a_{k,0}b^{r(k-1)}&=c,\\
        (b^{rk}-b^{r\alpha+\beta})a_{\alpha,\beta}&=\sum\limits_{t=1}^{\lfloor\frac{\beta}{r}\rfloor}\binom{\alpha+t}{\alpha}b^{r(\alpha-t)+\beta}a_{\alpha+t,\beta-rt}, && (\alpha,\beta)\neq (k-1,r).
    \end{align*}
    Suppose $(\alpha,\beta)$ are such that $r\alpha+\beta\neq rk$. Checking the recurrence relation for $(\alpha',\beta')\in \{(u,r(\alpha-u)+\beta), u=0,1,\ldots, \lfloor\frac{\beta}{r}\rfloor\}$, an induction argument gives $a_{\alpha,\beta}=0$, so we can assume $\beta=r(k-\alpha)$. Under this assumption, the recurrence relation simplifies to a linear system \begin{align*}
        c&=kb^{r(k-1)}a_{k,0},\\
        0&=\sum\limits_{t=1}^{k-\alpha}\binom{\alpha+t}{\alpha}b^{r(k-t)}a_{\alpha+t, r(k-\alpha-t)}, && \alpha=0,\ldots, k-2,
    \end{align*}
    which we can write in matrix form as $$\begin{pmatrix}
        0 & \binom{1}{0}b^{r(k-1)} & \binom{2}{0}b^{r(k-2)}& \ldots & \binom{k}{0}\\
        0 & 0 & \binom{2}{1}b^{r(k-1)}& \ldots & \binom{k}{1}b^r\\
        \vdots & \vdots & \vdots & \ddots & \vdots\\
        0 & 0 & 0 & \ldots & \binom{k}{k-1}b^{r(k-1)}
    \end{pmatrix}\begin{pmatrix}
        a_{0,rk}\\
        a_{1,r(k-1)}\\
        \vdots\\
        a_{k,0}
    \end{pmatrix}=\begin{pmatrix}
        0\\0\\ \vdots \\ c
    \end{pmatrix}.$$
    Based on the form of this matrix, there is a pair $$(f_0(x,y),g_0(x,y))=\left(x^k+y^r\sum\limits_{\alpha=0}^{k-1} a_{\alpha,r(k-\alpha)}x^\alpha y^{r(k-\alpha-1)}, kb^{r(k-1)}y^\ell\right)$$ representing a section of $F$. Note that $V(g_0)=\{(x,0)\in W\}$ and $f_0|_{V(g_0)}=x^k$, so $V(f_0,g_0)=\emptyset$.
\end{proof}

If $X$ is an exceptional Hopf surface, its tangent bundle is isomorphic to the unique non-split extension $$\begin{tikzcd}
    0 \arrow[r] & (L_b)^r \arrow[r] & \mc{T}_X \arrow[r] & L_b \arrow[r] & 0.
\end{tikzcd}$$

\section{Co-Higgs bundles with $c_2=0$}\label{flat-co-Higgs}
We begin by showing that for any rank-2 trace-free co-Higgs bundle $(E,\Phi)$, the underlying vector bundle $E$ is filtrable.
\begin{proposition}
    Let $X$ be a primary Hopf surface, and let $\lambda \in \Pic(X)$ be such that $\Hom(\lambda,\mc{T}_X)\neq 0$. If $\eta:Y\to X$ is a double cover with $Y$ integral and embedded in $\mathrm{Tot}(\lambda)$, then $Y$ is birational to a Hopf surface.
\end{proposition}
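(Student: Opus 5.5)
The plan is to pass to the universal cover $W=\C^2\setminus\{0\}$ and show that the preimage of $Y$ is, birationally and equivariantly, a punctured $\C^2$ with a contracting automorphism. Since $Y\subset \mathrm{Tot}(\lambda)$ is a double cover of $X$, completing the square shows that $Y$ is cut out by an equation $z^2=s$ with $s\in H^0(X,\lambda^2)$. In the co-Higgs setting this is the spectral cover $z^2+\det\phi_0=0$ of Lemma \ref{factor-Higgs}. Integrality of $Y$ means exactly that $s$ is not a square of a section of $\lambda$.

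\textbf{Lifting to $W$.} Write $\lambda=L_t$. Pulling back along $W\to X$ gives the surface
$$\widetilde{Y}=\{(x,y,z): z^2=\tilde s(x,y)\}\subset W\times\C,$$
where $\tilde s\in\mc{O}_W$ satisfies $\tilde s\circ\varphi=t^2\tilde s$. Then $Y=\widetilde{Y}/\langle\tilde\varphi\rangle$ with $\tilde\varphi(x,y,z)=(\varphi(x,y),tz)$. By Hartogs' theorem $\tilde s$ extends to $\C^2$, and the description of sections of line bundles recalled before Table \ref{line-bundles-Hopf} shows that $\tilde s$ is a polynomial, spanned by monomials $x^iy^j$ with $a^ib^j=t^2$ (or by powers $y^k$ in the exceptional case).

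\textbf{Case analysis.} The hypothesis $\Hom(\lambda,\mc{T}_X)\neq 0$ bounds $\lambda$ from above. In the diagonal case $\mc{T}_X\simeq L_a\oplus L_b$, so $H^0(X,L_a\otimes\lambda^{-1})$ or $H^0(X,L_b\otimes\lambda^{-1})$ is non-zero. In the exceptional case the extension of $L_b$ by $(L_b)^r$ gives the analogous bound. Either way $t^2$ is a product of at most two of the multipliers $a,b$, divided by further non-negative powers of them. Going through the five types with Table \ref{line-bundles-Hopf}, I expect only a short list of non-square possibilities for $\tilde s$, up to linear (or, in the resonant case, $x\mapsto x+cy^r$) coordinate changes commuting with $\varphi$:
\begin{itemize}
\item $\tilde s=xy$, in several of the types;
\item $\tilde s=x^2-cy^{2r}$ or $xy^r$, in the resonant and exceptional types;
\item a binomial $x^{i}-cy^{j}$, in the hyper-resonant type;
\item $\tilde s$ constant, which is excluded by integrality or gives two disjoint copies.
\end{itemize}

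\textbf{Explicit birational maps.} For each normal form I will write down a birational map from $\C^2$ to the cone $\{z^2=\tilde s\}$ that intertwines a linear contraction with $\tilde\varphi$.
\begin{itemize}
\item For $z^2=xy$, use $(u,v)\mapsto(u^2,v^2,uv)$. This identifies $\widetilde Y$ with $(\C^2\setminus\{0\})/\{\pm1\}$, and $\tilde\varphi$ lifts to $(u,v)\mapsto(\alpha u,\beta v)$ with $\alpha^2=a$, $\beta^2=b$, $\alpha\beta=t$. Since $-1$ acts freely on $\C^2\setminus\{0\}$, $Y$ is the quotient of $\C^2\setminus\{0\}$ by a group generated by a contraction and an involution. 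That is a (possibly secondary) Hopf surface, which in particular is birational to a primary one via its finite cover.
\item For $z^2=(x-\sqrt c\,y^r)(x+\sqrt c\,y^r)$, substitute $p=x-\sqrt c\,y^r$, $q=x+\sqrt c\,y^r$. The cone becomes $z^2=pq$, rational over $(y,w)$ with $w=z/p$, since $q=w^2p$ and $p-q=-2\sqrt c\,y^r$ solves for $p$. The induced action on $(y,w)$ is diagonal-contracting up to a Möbius factor, which should again give a Hopf surface birationally.
\item The cases $xy^r$ and the hyper-resonant binomials are handled by analogous weighted substitutions.
\end{itemize}

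\textbf{Main obstacle.} I expect the hard step to be the last one: showing that the birational model of $\widetilde Y$ is equivariantly $\C^2\setminus\{0\}$ with a holomorphic contraction on a Zariski-open, $\tilde\varphi$-stable set, so that the quotient really is birational to a Hopf surface and not merely bimeromorphic on a non-compact piece. My first approach is to normalize $\widetilde Y\cup\{0\}$. It is a quasi-homogeneous double point, hence a quotient singularity $\C^2/G$ for these low weights, and the weighted $\C^*$-action containing $\tilde\varphi$ lifts to $\C^2$. Then $Y$ is birational to $(\C^2\setminus\{0\})/\langle G,\tilde\varphi\rangle$, which Kato's characterization identifies as a Hopf surface.
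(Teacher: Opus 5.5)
\textbf{There is a genuine gap.} The step that actually uses $\Hom(\lambda,\mc{T}_X)\neq 0$ is your list of normal forms for $\tilde s$, and you only conjecture it. As written, the list is wrong in ways that break your uniform final step.

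You discard constant $\tilde s$, but by your own (correct) criterion $Y$ is integral whenever $s\neq 0$ is not the square of a section of $\lambda$. If $a=-b$, then $\lambda=L_{-1}$ has $\Hom(\lambda,\mc{T}_X)\simeq H^0(X,L_{-a})\oplus H^0(X,L_{-b})=H^0(X,L_b)\oplus H^0(X,L_a)\neq 0$ and $H^0(X,\lambda)=0$. So $s=1$ gives an integral $Y$, namely the unramified cover $W/\langle\varphi^2\rangle$: the two sheets of $\widetilde Y$ are exchanged by $\tilde\varphi(x,y,z)=(\varphi(x,y),-z)$. The same happens for $s=x^2$, $\lambda=L_b$ when $a=-b$. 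It also happens for $s=y^{r_2}$, $\lambda=L_a$ on hyper-resonant surfaces with $r_1=2$ and $r_2$ even, where $b^{r_2/2}=-a$. In these cases the normalization of the cone is two disjoint planes, not a quotient singularity $\C^2/G$, so they need their own (easy) argument.

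The list also omits two families:
\begin{itemize}
\item on resonant surfaces, $z^2=xy^e$ with $0\le e<r$ and $e\equiv r \bmod 2$, coming from $\lambda=(L_b)^m$ with $r/2\le m<r$;
\item on hyper-resonant surfaces with $r_1=2$, $z^2=y^e$, coming from $\lambda=L_a\otimes (L_b)^{-j}$.
\end{itemize}
Conversely, its exceptional entries cannot occur. There $x$ is not a section of any $L_t$, $\lambda\simeq(L_b)^m$ with $m\le r$, and every nonzero $s\in H^0(X,\lambda^2)$ equals $c\,y^{2m}=(\sqrt{c}\,y^m)^2$. Carrying out the enumeration with Table~\ref{line-bundles-Hopf} shows that integral $Y$ exist only on classical, resonant, and hyper-resonant surfaces with $r_1=2$. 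Until that is done, ``for these low weights'' is unsupported. This is exactly where the hypothesis is needed: for a general quasi-homogeneous $\tilde s$ (e.g.\ $x^3+y^6$), the double point $z^2=\tilde s$ is not a quotient singularity.

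Separately, your claim that a secondary Hopf surface is ``birational to a primary one via its finite cover'' is false, because a cover of degree $>1$ is not bimeromorphic. In your $xy$ case $\pi_1(Y)\simeq\Z\times\Z/2$. Since Hopf surfaces are minimal and not ruled, bimeromorphic Hopf surfaces are isomorphic, so $Y$ is bimeromorphic to no primary Hopf surface. The proposition therefore holds only with ``Hopf surface'' in Kodaira's sense (universal cover $\C^2\setminus\{0\}$), which is also what the paper's appeal to Bogomolov gives.

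With that reading you need neither a characterization theorem nor a $\C^*$-action; indeed no $\C^*$-action contains $\varphi$ when $a=-b$. The map $\tilde\varphi$ lifts simply because $\C^2\setminus\{0\}\to(\C^2\setminus\{0\})/G$ is a universal cover. The normalization of $Y$ is then a compact quotient of $\C^2\setminus\{0\}$ by a group acting freely and properly discontinuously.

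For comparison, the paper never looks at $\widetilde Y$. After normalizing, it uses $K_Y\simeq\eta^*(K_X\otimes\lambda)$ and $\deg\lambda<\deg K_X^{-1}$ to kill all plurigenera. It then computes $h^{01}(Y)=1$ and $c_1^2(Y)=c_2(Y)=0$ from $b_2(X)=0$, argues minimality, and applies Bogomolov's theorem to a class VII surface with $b_2=0$ containing the ramification curve. That argument is uniform over all five types but rests on a deep classification result. Its last step also tacitly assumes the normalized cover is ramified, which fails precisely in the cases you discarded. Your route, once the enumeration is done correctly, is elementary and identifies the normalization of $Y$ explicitly as $(\C^2\setminus\{0\})/\Gamma$, including whether it is primary or secondary.
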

\begin{proof}
    We can assume without loss of generality that $Y$ is normal, as the normalization of $Y$ will also be an integral double cover of $X$ embedded in $\mathrm{Tot}(\lambda')$ for some $\lambda'\in \Pic(X)$ with $\Hom(\lambda',\mc{T}_X)\neq 0$.
    
    For a Hopf surface $X$, the fact that $\Hom(\lambda, \mc{T}_X)\neq 0$ implies that $\deg(\lambda)<\deg(K_X^{-1})$, so $H^0(X,K_X^k\otimes \lambda^\ell)=0$ whenever $k\geq \ell$. Since $K_Y\simeq \eta^*(K_X\otimes \lambda)$, the plurigenera of $Y$ are given by $$P_k(Y)=h^0(X,(K_X\otimes \lambda)^k)+h^0(X,K_X^k\otimes \lambda^{k-1})=0.$$ Combining this with the fact that $b_2(X)=0$ allows us to calculate the invariants \begin{align*}
        h^{00}(Y)=1, && h^{01}(Y)=1, && h^{02}(Y)=0, && c_1^2(Y)=0, && c_2(Y)=0
    \end{align*}
    \cite[Section V.22]{Barth-Hulek-Peters-VandeVen03}. We also know that $Y$ is not projective as the algebraic dimension of $Y$ is equal to that of $X$. Note that every non-projective surface has non-negative second Chern number, so $Y$ is necessarily minimal. Therefore $Y$ is a minimal surface of class VII, and $h^{11}(Y)=c_2(Y)=0$. Finally, the ramification divisor of $\eta$ is a curve on $Y$, so $Y$ is a Hopf surface \cite{Bogomolov76}.
\end{proof}

\begin{proposition}\label{non-filtrable}
    Let $\eta:Y\to X$ be a double covering with $X$ a primary Hopf surface and $Y$ a Hopf surface. For any line bundle $\lambda \in \Pic(Y)$, $\eta_*\lambda$ is filtrable.
\end{proposition}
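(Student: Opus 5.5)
The strategy is to exhibit a sub-line bundle of $\eta_*\lambda$ directly. Since $\eta$ is finite, adjunction gives
\[
\Hom(\mu,\eta_*\lambda)\simeq \Hom(\eta^*\mu,\lambda)=H^0(Y,\lambda\otimes\eta^*\mu^{-1})
\]
for every $\mu\in\Pic(X)$. So it suffices to find $\mu\in\Pic(X)$ with $H^0(Y,\lambda\otimes \eta^*\mu^{-1})\neq 0$. A nonzero map $\mu\to\eta_*\lambda$ has rank-one image, and its saturation is a reflexive rank-one subsheaf, hence a line bundle on the smooth surface $X$. Thus a nonzero element of this $H^0$ makes $\eta_*\lambda$ filtrable.

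First I will describe the covering concretely. A double cover $Y\to X$ of Hopf surfaces corresponds to an index-two subgroup of $\pi_1(X)=\Z$ (for primary $X$; if $Y$ is secondary I would pass to its primary cover). Both surfaces are then quotients of $W$, with $Y=W/\langle\varphi^2\rangle$ and $X=W/\langle\varphi\rangle$. On $Y$, line bundles are $L^Y_s$, given by constant factors of automorphy $s$ for $\varphi^2$, and $\eta^*L_t=L^Y_{t^2}$. Since $\C^*$ is $2$-divisible, $\eta^*:\Pic(X)\to\Pic(Y)$ is surjective. Hence for any $\lambda=L^Y_s$ I can choose $t$ with $t^2=s$ and $\mu=L_t$, which gives $\lambda\otimes\eta^*\mu^{-1}\simeq\mc{O}_Y$ with $H^0=\C\neq 0$. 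The same argument phrased with factors of automorphy works: take a square root $t$ of $s$, so the constant function $1$ defines a section of $\lambda\otimes\eta^*L_t^{-1}$.

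If $Y$ need not arise from the same universal cover (for instance, if the problem intends a branched double cover with a ramification curve, as in the previous proposition), I will instead look for a $\mu$ of very negative degree. The map $\eta^*:\Pic(X)\to\Pic(Y)$ is a homomorphism $\C^*\to\C^*$ that multiplies degree by a positive factor, and $H^0(Y,L^Y_s)\neq0$ whenever $s$ equals a monomial weight of $Y$. I would pick $\mu$ so that $\lambda\otimes\eta^*\mu^{-1}$ equals the bundle $L^Y_{b_Y^m}$ for some $m\geq0$, which has the section $y^m$ (this exists on every Hopf surface type by Table \ref{line-bundles-Hopf}). Since $\Pic^0$-type bundles on $Y$ are all of the form $L^Y_s$ with $s\in\C^*$, solving $\eta^*\mu=\lambda\otimes (L^Y_{b_Y^m})^{-1}$ requires only that the image of $\eta^*$ be all of $\C^*$.

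The main obstacle is justifying that $\eta^*$ is surjective on $\Pic$. If this holds, $\eta^*$ is a nonconstant holomorphic (in fact algebraic) endomorphism $t\mapsto t^{d}$ of $\C^*$ up to rescaling, hence surjective. I would verify this by computing $\eta^*$ on factors of automorphy after lifting $\eta$ to the universal covers $W\to W$, which is possible since both spaces are simply connected away from the branch locus modulo codimension-two issues.
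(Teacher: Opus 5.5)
\paragraph{Where the proposal stands.}
Your reduction is correct. Adjunction gives $\Hom(\mu,\eta_*\lambda)\simeq H^0(Y,\lambda\otimes\eta^*\mu^{-1})$, and a nonzero such map saturates to a sub-line bundle. So it suffices to find one $\mu\in\Pic(X)$ with $H^0(Y,\lambda\otimes\eta^*\mu^{-1})\neq 0$.

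When $Y$ is \emph{primary}, your argument does finish the proof, and by a route different from and more elementary than the paper's. Let $\psi$ and $\varphi$ generate the deck groups of $Y$ and $X$.
\begin{itemize}
\item A lift $\tilde\eta:W\to W$ exists simply because $W$ is simply connected. The branch locus plays no role in this.
\item The lift satisfies $\tilde\eta\circ\psi=\varphi^{d}\circ\tilde\eta$ with $d\neq 0$. Otherwise $\eta$ would factor through a holomorphic map from the compact surface $Y$ to $W\subset\C^2$, hence be constant.
\item Hence $\eta^*L_t=L^Y_{t^d}$ and $\eta^*$ is onto.
\end{itemize}
You can then take $\lambda\simeq\eta^*\mu$ directly; the detour through $L^Y_{b_Y^m}$ is unnecessary. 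The projection formula even gives $\eta_*\lambda\simeq\mu\otimes\eta_*\mc{O}_Y$, a direct sum of two line bundles.

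\paragraph{The gap: secondary $Y$.}
The statement only asks that $Y$ be a Hopf surface, not a primary one; this is why the paper's proof speaks of a \emph{torsion} N\'eron--Severi group. Secondary Hopf surfaces genuinely occur as branched double covers of primary ones, even as spectral surfaces in this paper. Your remark about passing to the primary cover is made only in the unramified case, where $Y$ is automatically primary.

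Here is an example. Take the classical surface $X=W/\langle b\,\id\rangle$. The map $(p,q)\mapsto(p^2,q^2)$ descends to a double cover $\eta:Y=W/\langle \sqrt{b}\,\id,-\id\rangle\to X$ branched along $V(x)\cup V(y)$. This $Y$ is the spectral surface $z^2=xy$ of the Higgs field $\left(\begin{smallmatrix}0&x\\y&0\end{smallmatrix}\right)$ from Proposition \ref{Classical-extension}(1), with $t=u=0$ and $s=1$.
\begin{itemize}
\item Here $\pi_1(Y)\simeq\Z\times\Z/2$, and $\Pic(Y)\simeq\Hom(\pi_1(Y),\C^*)$ has two components.
\item Since $\eta_*$ kills $-\id$, the image $\eta^*\Pic(X)$ is only $\Pic^0(Y)$.
\item For instance, $\mc{O}_Y(V(p))$, on which $-\id$ acts by $-1$, is not a pullback.
\end{itemize}
So your assumptions fail here: not every line bundle on $Y$ is some $L^Y_s$, and $\eta^*$ is not onto. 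What your method actually needs is that each class in
$\Pic(Y)/\eta^*\Pic(X)\simeq\Pic(Y)/\Pic^0(Y)\simeq H^2(Y,\Z)$
contains a line bundle with a nonzero section. This holds in the example, via the section $p$. In general it is not addressed by your proposal, and it would require an argument about semi-invariant functions for the finite part of $\pi_1(Y)$.

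\paragraph{How the paper avoids this.}
The paper never looks at $\Pic(Y)$ and works on $X$ instead. Since $NS(Y)$ is torsion, Grothendieck--Riemann--Roch gives $\chern(\eta_*\lambda)=2$. A non-filtrable rank-2 bundle is stable. By the Kobayashi--Hitchin correspondence, a stable bundle with these Chern classes would come from an irreducible unitary representation of $\pi_1(X)\simeq\Z$. No such representation exists because $\Z$ is abelian.
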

\begin{remark}
    An analogous result to this proposition appears in \cite[Proposition 3.14]{Moraru04} for the case of generic-type Hopf surfaces.
\end{remark}
\begin{proof}
    Any Hopf surface $Y$ has torsion Néron-Severi group, so $\chern(\eta_*\lambda)=2$ by Grothendieck-Riemann-Roch. If $\eta_*\lambda$ were stable (as is always the case for non-filtrable bundles of rank 2), then it would induce an irreducible $U(2)$ representation of $\pi_1(X)\simeq \Z$ via a Riemann-Hilbert correspondence. However, there is no such representation as $\Z$ is abelian, so $\eta_*\lambda$ is filtrable.
\end{proof}

Using Lemma \ref{factor-Higgs} and Proposition \ref{non-filtrable}, an important step in classifying co-Higgs bundles on a Hopf surface is classifying nowhere-vanishing Higgs fields $\phi\in H^0(X,\sEnd_0(E)\otimes \lambda)$, where $E$ is an extension of line bundles and $\lambda \in \Div(X)$ is such that $H^0(X,\mc{T}_X\otimes \lambda^{-1})\neq 0$. In the following, we will assume that $E$ is normalized so that $H^0(X,E)\neq 0$ but $H^0(X,E\otimes H)=0$ for any $H\in \Pic(X)$ with $\deg(H)<0$.

Recall that any extension of line bundles on a Hopf surface falls into one of the following three categories:
\begin{enumerate}
    \item $E\simeq \mc{O}_X\oplus H$ with $H \in \Div$ and $\deg(H)\leq 0$. In this case $\sEnd_0(E)\simeq \mc{O}_X\oplus H\oplus H^{-1}$;
    \item $E\simeq \mc{O}_X\oplus H$ with $H \in \Pic(X)\setminus \Div$ and $\deg(H)\leq 0$;
    \item $E$ is the non-split extension of $H$ by $\mc{O}_X$ determined by a non-zero class $s \in H^1(X,H^{-1})$ for $H\in \Div$ with $\deg(H)\leq 0$.
\end{enumerate}
The $\lambda$-twisted Higgs fields which do not vanish on any divisor can be described uniformly when the underlying bundle is of type (2), and when the underlying bundle is of type (1) for $\lambda \in \Div$ with $\deg(\lambda)\leq 0$. Note that in the following we will present $\phi$ so that it is determined by its parameters up to conjugation with an automorphism of $E$ unless otherwise indicated.

For bundles $E$ of type (2), $E$ admits a $\lambda$-twisted Higgs field which does not vanish on any divisor if and only if $\lambda\simeq \mc{O}_X$. In this case $$\phi=\begin{pmatrix}
    a & 0\\0 & -a
\end{pmatrix}$$ for some $a \in \C^*,$ and the direct summands are both $\phi$-invariant sub-line bundles.

If $E$ is of type (1) and $X$ is diagonal, in order for $\mc{O}_X\oplus H$ to admit a $\lambda$-twisted Higgs field that does not vanish on any divisor, a necessary condition is that either one of $\lambda, \lambda\otimes H, \lambda\otimes H^{-1}$ must be isomorphic to $\mc{O}_X$ or there are non-negative integers $k,\ell$ and some $\tau \in \C^*$ so that $$\lambda\oplus \lambda\otimes H\oplus \lambda\otimes H^{-1}\simeq (L_a)^k\oplus (L_b)^\ell\oplus L_\tau.$$
For an exceptional Hopf surface, the pair $(\lambda,H)$ must satisfy the first case.

If  $E$ is of type (1) and $\deg(\lambda)\leq 0$, then either $\phi$ is the unique nilpotent Higgs field $$\begin{pmatrix}
    0 & 1\\0 & 0
\end{pmatrix}$$ on $\mc{O}_X\oplus \lambda$ with $\mc{O}_X$ as an invariant sub-line bundle or $\lambda\simeq \mc{O}_X$ and $$\phi=\begin{pmatrix}
    a & 0\\0 & -a
\end{pmatrix}$$ for some $a\in \C^*$ with the two direct summands as $\phi$-invariant sub-line bundles.

For bundles of type (3), the behaviour differs depending on whether the Hopf surface is diagonal. For diagonal Hopf surfaces, we have the following result:
\begin{lemma}
    Let $X$ be a diagonal Hopf surface, and let $E$ be a non-split extension of $H$ by $\mc{O}_X$, where $H \in \Div$ has $\deg(H)\leq 0$. If $\phi\in H^0(X,\sEnd_0(E)\otimes \lambda)$ is a $\lambda$-twisted Higgs field for some $\lambda \in \Div$, then $\phi=s\phi_0$ for some $s \in H^0(X,\lambda\otimes H^{-1})$, where $\phi_0:=(\iota\otimes \id_{H})\circ \rho$ with $$\begin{tikzcd}
    0 \arrow[r] & \mc{O}_X \arrow{r}{\iota} & E \arrow{r}{\rho} & H \arrow[r] & 0
\end{tikzcd}$$ being the short exact sequence corresponding to the extension class of $E$. The inclusion $\iota$ gives the unique $\phi_0$-invariant sub-line bundle.
\end{lemma}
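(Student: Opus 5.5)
Since $\deg(H)\leq 0$ and $H^1(X,H^{-1})\neq 0$ with $\deg(H^{-1})\geq 0$, Proposition \ref{diag-indecomposable} applies with $\lambda_1=\mc{O}_X$, $\lambda_2=H$. So $E=E_A$ with factor of automorphy
$$A=\begin{pmatrix} 1 & f\\ 0 & m\end{pmatrix},$$
where $m$ is the constant factor for $H$ and $f\in H^0(X,H^{-1})\setminus\{0\}$. I will work directly with this factor of automorphy. Write $\lambda=L_\tau$. A section of $\sEnd_0(E)\otimes\lambda$ is a trace-free matrix of holomorphic functions
$$\psi=\begin{pmatrix} p & q\\ u & -p\end{pmatrix}$$
on $W$ satisfying $\psi(\varphi(x,y))=\tau A\psi(x,y)A^{-1}$. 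I will expand this equation entrywise and solve it by the Taylor-coefficient comparison already used in Proposition \ref{diag-indecomposable}. This is valid because $X$ is diagonal, so $\varphi$ acts diagonally on monomials.

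Computing $A\psi A^{-1}$ gives three equations. The bottom-left entry gives $u\circ\varphi=\tau m\, u$, so $u\in H^0(X,\lambda\otimes H)$. The diagonal entry gives $p\circ\varphi=\tau(p+fu/m)$. Here $fu$ is a section of $\lambda$, so it has only monomials $x^iy^j$ with $a^ib^j=\tau$. Comparing those coefficients forces $fu=0$, exactly as in the proof of Proposition \ref{diag-indecomposable}. Since $\mc{O}_W$ is a domain and $f\neq 0$, this gives $u=0$. The diagonal equation then reads $p\in H^0(X,\lambda)$.

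With $u=0$, the top-right entry reads $q\circ\varphi=\tau m^{-1}(q-2fp)$, up to sign conventions. The term $fp$ is a section of $\lambda\otimes H^{-1}$, so it only involves monomials with $a^ib^j=\tau m^{-1}$. On exactly those monomials the left side and the $\tau m^{-1}q$ term cancel, so comparing coefficients forces $fp=0$, hence $p=0$. What remains is $q\circ\varphi=\tau m^{-1}q$, that is, $q=s\in H^0(X,\lambda\otimes H^{-1})$. The surviving matrix is $\begin{pmatrix}0&s\\0&0\end{pmatrix}$. In the frame given by $A$, the map $\phi_0=(\iota\otimes\id_H)\circ\rho$ is $\begin{pmatrix}0&1\\0&0\end{pmatrix}$: it projects to the second coordinate and includes into the first, twisted by $H^{-1}$. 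So I need to check that this is a well-defined section of $\sEnd_0(E)\otimes H^{-1}$, which holds because $A\begin{pmatrix}0&1\\0&0\end{pmatrix}A^{-1}=m^{-1}\begin{pmatrix}0&1\\0&0\end{pmatrix}$. Hence $\phi=s\phi_0$.

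For the invariant sub-line bundle, $\phi_0$ kills $\iota(\mc{O}_X)$, so this sub-line bundle is invariant. Conversely, an invariant sub-line bundle $L\subset E$ must lie in $\ker\phi_0$ or be an eigenline of the nilpotent $\phi_0$. Away from the zero set of $\phi_0$, which is empty since its entry is the constant $1$, the kernel is exactly $\iota(\mc{O}_X)$. So $L=\iota(\mc{O}_X)$ generically, and therefore as saturated subsheaves.

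The main obstacle is the Taylor-coefficient step forcing $fu=0$ and then $fp=0$. I must check that the relevant eigenvalue bookkeeping really produces only the resonant monomials on the inhomogeneous side and a cancellation there. This uses $\deg(\lambda_1)\geq\deg(\lambda_2)$, through $f\in H^0(X,H^{-1})$, in the same way as in Proposition \ref{diag-indecomposable}. The signs and $m$-factors in the top-right equation need care, but they do not affect the argument.
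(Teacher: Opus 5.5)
Your proposal is correct, but it argues differently from the paper. The paper never writes down the conjugation equation. It tensors $0\to\mc{O}_X\to E\to H\to 0$ with $\lambda\otimes E^\vee\simeq \lambda\otimes H^{-1}\otimes E$, which gives $0\to H^0(X,\lambda\otimes H^{-1}\otimes E)\to H^0(X,\sEnd(E)\otimes\lambda)\to H^0(X,\lambda\otimes E)$. It then uses the conclusion $H^0(X,E\otimes L_\tau)\simeq H^0(X,L_\tau)$ of Proposition \ref{diag-indecomposable} twice, as a black box:
first, the right-hand group is already exhausted by the image of the identity component $H^0(X,\lambda)\cdot\id_E$;
second, the left-hand group is $H^0(X,\lambda\otimes H^{-1})$, realized by the maps $s\,(\iota\otimes\id_H)\circ\rho$.

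You instead solve $\psi\circ\varphi=\tau A\psi A^{-1}$ entry by entry. This re-runs the Taylor-coefficient resonance argument from that proposition's proof, now for the adjoint action. You only need the proposition's factor-of-automorphy normal form, and $\deg(H)\leq 0$ enters through $f$ being a genuine section of $H^{-1}$.

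The paper's route is shorter and more portable. Yours is self-contained and yields the explicit normal form $\begin{pmatrix}0&s\\0&0\end{pmatrix}$ in the $A$-frame. It also makes the lemma's final claim immediate: $\phi_0$ is nowhere vanishing with kernel $\iota(\mc{O}_X)$, so it has a unique invariant sub-line bundle. The paper's proof does not spell this out.

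There are two bookkeeping slips, neither affecting the argument.
First, the diagonal entry of $A\psi A^{-1}$ is $p+fu$, not $p+fu/m$.
Second, $ANA^{-1}=m^{-1}N$ together with $N\circ\varphi=N$ forces $\tau=m$. So $\phi_0$ is a section of $\sEnd_0(E)\otimes H$, not of $\sEnd_0(E)\otimes H^{-1}$. This is exactly what makes $s\phi_0$, with $s\in H^0(X,\lambda\otimes H^{-1})$, a section of $\sEnd_0(E)\otimes\lambda$.
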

\begin{proof}

    Consider the defining exact sequence for $E$ and tensor with $\lambda\otimes E^\vee$ for some $\lambda \in \Div$ to get $$\begin{tikzcd}
        0 \arrow[r] & \lambda\otimes E^\vee \arrow[r] & \sEnd(E)\otimes \lambda \arrow[r] & H\otimes \lambda \otimes E^\vee \arrow[r] & 0.
    \end{tikzcd}$$
    As $E$ is a rank-2 vector bundle, $E^\vee\simeq E\otimes \det(E)^{-1}\simeq E\otimes H^{-1}$. Applying this identification and taking the left exact sequence in cohomology gives $$\begin{tikzcd}
        0 \arrow[r] & H^0(X,\lambda\otimes H^{-1}\otimes E) \arrow{r}{H^0(\iota)} & H^0(\sEnd(E)\otimes,\lambda) \arrow{r}{H^0\rho} & H^0(X,\lambda\otimes E).
    \end{tikzcd}$$
    Note that $H^0(X,\lambda)$ injects into the image of $H^0(\rho)$ since $(\rho\otimes \id_\lambda)\circ (s\otimes \id_E)=\rho\otimes s$ for any $s \in H^0(X,\lambda)$. By Proposition \ref{diag-indecomposable}, $H^0(X,\lambda \otimes E)\simeq H^0(X,\lambda)$, so $H^0(\rho)$ is surjective and $$H^0(X,\sEnd_0(E)\otimes \lambda)\simeq H^0(X,\lambda\otimes H^{-1}\otimes E)\simeq H^0(X,\lambda\otimes H^{-1}).$$ 
    We can construct this isomorphism directly by mapping $s \in H^0(X,\lambda\otimes H^{-1})$ to $s \phi_0$.
\end{proof}

From this proposition it is clear that the Higgs field $s\phi_0$ will be non-vanishing away from a codimension-2 subset if and only if $s$ is a non-zero constant, so we must have $\lambda\simeq H$. For non-diagonal Hopf surfaces, the bundles of type (3) admit a wider variety of Higgs fields, so we will leave discussion of this case to section \ref{exceptional-Higgs}.

For $E$ of type (1) and $\deg(\lambda)>0$, we will look at each class of Hopf surfaces separately.

\subsection{Classical case}
    When $X$ is classical, $H^0(X,\mc{T}_X\otimes \lambda^{-1})\neq 0$ and $\deg(\lambda)>0$ imply $\lambda\simeq L_b$.  
    
    \begin{proposition}\label{Classical-extension}
        Let $X$ be a classical Hopf surface. The $L_b$-twisted Higgs bundles of the form $(\mc{O}_X\oplus H,\phi)$, up to conjugation by an automorphism of $\mc{O}_X\oplus H$, are as follows:
    \begin{enumerate}
        \item $H \simeq\mc{O}_X$ and  $$\phi=\begin{pmatrix}
        tx & sx+uy\\ y & -tx
    \end{pmatrix},$$ where $s,t,u \in \C$ and $s^2-4t^2u\neq0$, up to changing $t$ to $-t$. This Higgs field always has irreducible spectral curve.
    \item $H\simeq  L_b^{-1}$ and $$\phi=\begin{pmatrix}
        0 & \beta\\ 1 & 0
    \end{pmatrix}$$ with $\beta \in H^0(X,(L_b)^2)$. This Higgs field will have irreducible spectral curve when $\beta$ is not a square. When $\beta=t^2$ for some $t \in H^0(X,L_b)$, the $\phi$-invariant sub-line bundles are $L_b^{-1}$ under the inclusions $(\pm t, 1).$
    \item $H\simeq  (L_b)^{-k}$ for some $k\geq 0$ and $$\phi=\begin{pmatrix}
        ux+vy & (\overline{v}x-\overline{u}y)^{1+k}\\ 0 & -ux-vy
    \end{pmatrix}$$ for $u,v \in \C$ which are not both zero. The $\phi$-invariant sub-line bundles are the first direct summand $\mc{O}_X$ and $(L_b)^{-1-k}$ under the inclusion $((\overline{v}x-\overline{u}y)^{1+k}, -2ux-2vy).$
    \end{enumerate} 
    \end{proposition}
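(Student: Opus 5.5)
Write $E=\mc{O}_X\oplus H$ with $H=L_\tau$ and $\deg(H)\leq 0$. A trace-free $L_b$-twisted Higgs field then has the form
$$\phi=\begin{pmatrix} A & B\\ C & -A\end{pmatrix},\qquad A\in H^0(X,L_b),\quad B\in H^0(X,L_b\otimes H^{-1}),\quad C\in H^0(X,L_b\otimes H),$$
since $\sEnd_0(E)\otimes L_b\simeq L_b\oplus (L_b\otimes H^{-1})\oplus (L_b\otimes H)$. On a classical Hopf surface $H^0(X,L_t)$ is spanned by the monomials $x^iy^j$ with $b^{i+j}=t$. Hence $H^0(L_b)=\Span\{x,y\}$, and $L_b\otimes H$ has sections only when $H\simeq L_b^{m}$ with $m\geq -1$. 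Combined with $\deg(H)\leq 0$, this forces $C\neq 0$ only for $H\simeq\mc{O}_X$ (where $C$ is linear) or $H\simeq L_b^{-1}$ (where $C$ is constant).

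The plan is to split into three cases: $H\simeq\mc{O}_X$, $H\simeq L_b^{-1}$, and $C=0$ (which covers every $H\simeq L_b^{-k}$ with $k\geq 0$, and also $H\notin\Div$, where $A$ alone cannot avoid vanishing on a divisor). In each case I normalise using $\mathrm{Aut}(E)$. I also impose the standing requirement that $\phi$ not vanish on a divisor; for $2\times 2$ matrices of sections on $W$ this means the entries have no common factor. The integrability condition $\phi\wedge\phi=0$ is automatic, because $\phi$ is a single twisted field.

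\textbf{Case $H\simeq L_b^{-1}$ with $C\neq 0$.} Rescale the summands to get $C=1$. Then conjugate by the unipotent automorphism $\begin{pmatrix}1&0\\ g&1\end{pmatrix}$ with $g\in H^0(L_b)$; this shifts $A$ by $\pm g$ and so lets us set $A=0$. What remains is $B=\beta\in H^0(L_b^2)$, a binary quadratic. The spectral curve $\eta^2=\beta$ is reducible exactly when $\beta=t^2$. In that case the invariant line subbundles are the eigenlines $(\pm t,1)$, and these are images of $L_b^{-1}$.

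\textbf{Case $C=0$.} Write $A=ux+vy$ and $B\in H^0(L_b^{1+k})$, which is a degree $1+k$ form. The automorphisms $\begin{pmatrix}1&h\\0&c\end{pmatrix}$ with $h\in H^0(H^{-1})$ change $B$ to $cB+2hA$ (up to sign conventions). So $B$ is defined only modulo multiples of $A$. I would pick the canonical representative of the quotient $H^0(L_b^{1+k})/A\cdot H^0(L_b^{k})$, which is one-dimensional. The natural choice is the power $(\overline{v}x-\overline{u}y)^{1+k}$ of the linear form Hermitian-orthogonal to $A$. It spans a complement because it is not divisible by $A$: $\overline{v}x-\overline{u}y$ is not proportional to $ux+vy$ when $(u,v)\neq 0$. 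Nonvanishing on a divisor needs $A\neq 0$ and $B\notin (A)$, which gives the listed form. The invariant subbundles are then computed directly. The first summand $\mc{O}_X$ is invariant. The other eigenvector for eigenvalue $-A$ is $(B,-2A)$, which gives the inclusion of $L_b^{-1-k}$.

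\textbf{Case $H\simeq\mc{O}_X$.} This is the main obstacle. Here $A,B,C$ are all linear forms, and $\mathrm{Aut}(E)=\GL_2(\C)$ acts by conjugation. So I need to classify pencils, or equivalently $\GL_2$-orbits of trace-free matrices of linear forms in $x,y$. The approach is to view $\phi=x\phi_1+y\phi_2$ with $\phi_i\in\mathfrak{sl}_2$. The condition of not vanishing on a divisor, i.e. having no common linear factor, forces $\phi_1,\phi_2$ to be linearly independent. Next I would change coordinates in $(x,y)$; this is legitimate since $\GL_2$ acts on the classical Hopf surface. First choose $y$ so that $C=y$: conjugating by $\GL_2$ brings some entry of the pencil to a generic position. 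After that, the residual stabiliser lets us put $A=tx$ and $B=sx+uy$. The discriminant $\det\phi=-(A^2+BC)$ equals $-(t^2x^2+sxy+uy^2)$. Irreducibility of the spectral curve $\eta^2=t^2x^2+sxy+uy^2$ is then exactly the condition that this quadratic is not a square, i.e. $s^2-4t^2u\neq 0$. The sign ambiguity in $t$ comes from the remaining conjugation by $\mathrm{diag}(1,-1)$ combined with $x\mapsto -x$. I expect the delicate points to be getting exactly this residual normal form and verifying that the reducible-spectral-curve subcases are either absorbed into case (3) with $k=0$ or else violate the divisor condition.
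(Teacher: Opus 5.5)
Your cases $C=0$ and $H\simeq L_b^{-1}$ follow the paper's argument, with one slip. An automorphism of $\mc{O}_X\oplus L_b^{-1}$ has lower-left entry in $H^0(X,L_b^{-1})=0$. So the unipotent that shifts $A$ by $g$ is $\begin{pmatrix}1&g\\0&1\end{pmatrix}$ with $g\in H^0(X,L_b)$, not the lower-triangular one you wrote.

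The genuine gap is in the case $H\simeq\mc{O}_X$. There you reach $C=y$, and explain the ambiguity $t\mapsto -t$, by changing the coordinates $(x,y)$ on $X$. The proposition classifies $\phi$ only up to conjugation by $\mathrm{Aut}(E)=\GL_2(\C)$. Pulling back along an automorphism of $X$ is a different operation and is not allowed here.

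It is also not a harmless enlargement of the group. For $\phi=x\phi_1+y\phi_2$, constant conjugation preserves the Gram matrix $\begin{pmatrix}\tr\phi_1^2&\tr\phi_1\phi_2\\ \tr\phi_1\phi_2&\tr\phi_2^2\end{pmatrix}=\begin{pmatrix}2t^2&s\\ s&2u\end{pmatrix}$. A linear change of $(x,y)$ changes this matrix by congruence. So once base changes are allowed, every field of type (1) becomes equivalent to $\begin{pmatrix}x&y\\y&-x\end{pmatrix}$, and the parameters $s,t,u$ carry no information. Similarly, $\mathrm{diag}(1,-1)$ combined with $x\mapsto -x$ is not a bundle automorphism, and it sends $C=y$ to $-y$ anyway. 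Finally, you leave unproved that the square-discriminant subcase lands in case (3).

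The missing step needs no base change; this is essentially the paper's route.
\begin{itemize}
\item Conjugate by a constant matrix that puts $\phi_1$ in upper-triangular form, which is possible for any single matrix. Then $C=cy$.
\item If $c=0$, $\phi$ is upper triangular and falls under case (3) with $k=0$.
\item If $c\neq 0$, write $A=a_1x+a_2y$ and conjugate by $\begin{pmatrix}c&-a_2\\0&1\end{pmatrix}$. This gives $C=y$ and $A=a_1x$, which is the stated form with $t=a_1$.
\item If then $s^2=4t^2u$, one has $t\neq 0$, since otherwise $s=0$ and $\phi$ vanishes on $V(y)$. In that case $(-\tfrac{s}{2t},1)$ is a common eigenvector of $\phi_1$ and $\phi_2$. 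So $\phi$ is simultaneously triangularisable and again lies in case (3).
\item The residual symmetry $t\mapsto -t$ comes from conjugating by the constant matrix $\begin{pmatrix}s&2tu\\2t&s\end{pmatrix}$. This matrix commutes with $\phi_2$, sends $\phi_1$ to its $t\mapsto -t$ counterpart, and is invertible exactly because $s^2-4t^2u\neq 0$.
\end{itemize}
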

    \begin{remark}
        Computations for cases (1) and (2) of this proposition appear in \cite[Examples 3-4]{Boulter-Moraru25}
    \end{remark}
    \begin{proof}
        First consider the case where $H\simeq L_b^{-k}$ with $k>1$. Then $\sEnd_0(E)\otimes L_b\simeq L_b^{1-k}\oplus L_b\oplus L_b^{1+k}$. The first component $L_b^{1-k}$ has no sections, so $\phi$ is of the form $$\phi=\begin{pmatrix}
            f & g\\ 0 & -f
        \end{pmatrix}.$$
        Automorphisms of $E$ are of the form $$\begin{pmatrix}s & \beta\\ 0 & t\end{pmatrix},$$
        where $s,t \in \C^*$ and $\beta \in H^0(X,L_b^{k})$. Conjugating $\phi$ by such an automorphism gives $$\begin{pmatrix}
            f & \frac{s}{t}g-\frac{2}{t}\beta f\\0 & -f
        \end{pmatrix}.$$
        Note that $g$ and $f\cdot H^0(X,L_b)$ together span $H^0(X,L_b^2)$ by the assumption that $V(f,g)=\emptyset$, so $s,\beta,t$ can be chosen to make the upper-right component any polynomial whose zero set does not contain the zero set of $f$. Thus, if $f=ux+vy$, we can choose a unique representative $\overline{v}x-\overline{u}y$ for $g$ to get $\phi$ of type (3).

        If $H\simeq L_b^{-1}$, $\sEnd_0(E)\otimes L_b\simeq \mc{O}_X\oplus L_b\oplus L_b^2$. If the $\mc{O}_X$-component is zero, the same argument as in the previous case shows that $\phi$ is of type (3) with $k=1$. Now assume $$\phi=\begin{pmatrix}
            \alpha & \beta\\ c & -\alpha
        \end{pmatrix}$$
        with $c \in \C^*$, $\alpha\in H^0(X,L_b)$, $\beta \in H^0(X,L_b^2$. Conjugating with an automorphism $$\begin{pmatrix}
            c & -\alpha\\0 & 1
        \end{pmatrix}$$
        gives $\phi$ of type (2).

        Finally, if $k=0$, $\sEnd_0(E)\otimes L_b$ is just $\mathfrak{sl}_2\otimes L_b$. If $\phi$ can be made upper-triangular, a similar argument to when $k>1$ shows that $\phi$ is of type (3).

        If the $x$- and $y$-components cannot be made simultaneously upper-triangular, then $\phi$ is conjugate to $\begin{pmatrix}
            f & g\\cy & -f
        \end{pmatrix}$ for some $f,g \in H^0(X,L_b)$ and $c \in \C^*$. By conjugating with $$\begin{pmatrix}
            c & -f(1,0)\\0 & 1
        \end{pmatrix},$$
        $\phi$ transforms to $$\begin{pmatrix}
            tx & sx+uy\\y & -tx
        \end{pmatrix}$$
        for $s,t,u \in \C$ where $s$ and $t$ are not both zero by the fact that $\phi$ is nowhere-vanishing. If $s^2-4t^2u=0$, then $t\neq 0$ and $u=\frac{s^2}{4t^2}$. In this case, one can show that the $x$- and $y$-components of $\phi$ can be made simultaneously upper triangular, so if this is not the case $\phi$ is of type (1). Finally, note that conjugating $\phi$ of type (1) with $\begin{pmatrix}
            s & 2tu\\2t & s
        \end{pmatrix}$ has the effect of multiplying $t$ by $-1$.

        In all cases with reducible spectral cover (i.e. cases with $\det(\phi)$ a perfect square), the invariant sub-line bundles can be computed via standard eigenvector computations.
    \end{proof}

    \subsection{The Resonant case}
    When $X$ is resonant $H^0(X,\mc{T}_X\otimes \lambda^{-1})\neq 0$ and $\deg(\lambda)>0$ imply $\lambda\simeq (L_b)^\ell$ for some $\ell\in \Z$ with $0<\ell\leq r$. 

    Recall that for $s \in H^0(X,(L_b)^k)$ for some $k\geq 0$, the only way for $s$ to be non-zero on points of the form $(x,0)$ is if $k$ is a multiple of $r.$ 

    \begin{proposition}
        Let $X$ be a resonant Hopf surface. The $L_b^{r}$-twisted Higgs bundles of the form $(\mc{O}_X\oplus H,\phi)$, up to conjugation by an automorphism of $\mc{O}_X\oplus H$, are as follows:
    \begin{enumerate}
        \item $H\simeq \mc{O}_X$ and $$\phi=\begin{pmatrix}
        tx & sx+uy^r\\ y^r & -tx
    \end{pmatrix},$$ where $s^2-4t^2u\neq 0$, up to replacing $t$ with $-t$. This Higgs field always has irreducible spectral curve;
    \item $H\simeq (L_b)^{-r}$ and $$\phi=\begin{pmatrix}
        0 & \beta\\1 & 0
    \end{pmatrix}$$ with $\beta\in H^0(X,(L_b)^{2r})$. This Higgs field will have irreducible spectral curve when $\beta$ is not a square. When $\beta=t^2$ for some $t \in H^0(X, (L_b)^r)$, the invariant sub-line bundles are $(L_b)^{-r}$ under the inclusions $(\pm t, 1)$.
    \item $H\simeq (L_b)^{-kr}$ for some $k\geq0$ and $$\phi=\begin{pmatrix}
        ux+vy^r & (\overline{v}x-\overline{u}y^r)^{1+k}\\ 0 & -ux-vy^r
    \end{pmatrix}$$ for $u,v \in \C$ not both zero. The $\phi$-invariant sub-line bundles are the first direct summand $\mc{O}_X$ and $(L_b)^{-r(1+k)}$ under the inclusion $((\overline{v}x-\overline{u}y^r)^{1+k}, -2ux-2vy^r).$
    \item $H\simeq (L_b)^{-k}$ with $0<k<r$ and $$\phi=\begin{pmatrix}
        s x & \beta\\ y^{r-k} & -s x
    \end{pmatrix}$$ for  $s \in \C^*$ and $\beta\in H^0(X,(L_b)^{r+k})$. This Higgs field will have irreducible spectral curve if there is no constant $t \in \C$ so that $\beta=2stxy^k+t^2y^{r+k}$. For $\beta=2stxy^k+t^2y^{r+k}$, the $\phi$-invariant sub-line bundles are $(L_b)^{-k}$ under the inclusion $(ty^k, -1)$ and $(L_b)^{-r}$ under the inclusion $(2s x+ty^r, y^{r-k}).$
    \item $H\simeq (L_b)^{-k}$ for $k>0$ with $k$ not a multiple of $r$ and $$\phi=\begin{pmatrix}
        \alpha & y^{r+k}\\0 & -\alpha
    \end{pmatrix}$$ for $\alpha \in H^0(X,(L_b)^r)$ with $\alpha(1,0)\neq 0$. The $\phi$-invariant sub-line bundles are the first direct summand $\mc{O}_X$ and $(L_b)^{-r-k}$ under the inclusion  $(y^{r+k}, -2\alpha)$.
    \end{enumerate}
    \end{proposition}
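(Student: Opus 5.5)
The proof will follow the same scheme as Proposition \ref{Classical-extension}. The only new ingredient is the shape of $H^0(X,(L_b)^m)$ on a resonant surface. Since $a=b^r$, the space $H^0(X,(L_b)^m)$ is spanned by the monomials $x^iy^j$ with $ri+j=m$. In particular $H^0(X,(L_b)^r)=\Span\{x,y^r\}$, and a section of $(L_b)^m$ is nonzero along $\{y=0\}$ only if $r\mid m$. We write $H\simeq (L_b)^{-k}$ with $k\geq 0$ (by the normalization). Then
$$\sEnd_0(E)\otimes L_b^r\simeq (L_b)^{r-k}\oplus (L_b)^{r}\oplus (L_b)^{r+k},$$
and we write $\phi=\begin{pmatrix}\alpha&\beta\\ \gamma&-\alpha\end{pmatrix}$ with $\gamma\in H^0((L_b)^{r-k})$, $\alpha\in H^0((L_b)^r)$ and $\beta\in H^0((L_b)^{r+k})$. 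The automorphisms of $E$ are upper-triangular matrices $\begin{pmatrix}s&\delta\\0&t\end{pmatrix}$ with $\delta\in H^0((L_b)^k)$, or all of $\GL_2(\C)$ when $k=0$. Throughout we impose the condition that $\phi$ does not vanish on a divisor, i.e. that $V(\alpha,\beta,\gamma)$ has codimension $2$.

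\emph{Case $\gamma=0$.} This case always applies when $k>r$, since then $(L_b)^{r-k}$ has no sections. It also applies when $k\leq r$ and the lower entry vanishes. Conjugation replaces $\beta$ by $\frac{s}{t}\beta-\frac{2}{t}\delta\alpha$, so $\beta$ is only determined modulo $\alpha\cdot H^0((L_b)^k)$, and we need $V(\alpha,\beta)$ to have codimension $2$.

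If $r\mid k$, say $k=rk'$, then $H^0((L_b)^{r(1+k')})$ is spanned by the monomials in $x$ and $y^r$ of degree $1+k'$. With $\alpha=ux+vy^r$, the class of $(\overline{v}x-\overline{u}y^r)^{1+k'}$ generates the quotient by $\alpha\cdot H^0((L_b)^{rk'})$. This is the same argument as for a binary form with a linear factor, and it gives case (3).

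If $r\nmid k$, then every section of $(L_b)^{r+k}$ is divisible by $y$. Non-vanishing along $\{y=0\}$ therefore forces $\alpha(1,0)\neq 0$. Modulo $\alpha\cdot H^0((L_b)^k)$, the quotient is spanned by $y^{r+k}$: we reduce $x$-powers using $\alpha=cx+dy^r$ with $c\neq 0$. Scaling gives case (5).

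\emph{Case $\gamma\neq 0$ with $k=r$.} Here $\gamma$ is a constant, and conjugating by $\begin{pmatrix}\gamma&-\alpha\\0&1\end{pmatrix}$ kills the diagonal entry, giving case (2). This is the same as the classical case.

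\emph{Case $\gamma\neq0$ with $0<k<r$.} Now $H^0((L_b)^{r-k})=\Span\{y^{r-k}\}$, so after scaling $\gamma=y^{r-k}$. Let $\alpha=sx+py^r$. Conjugation by $\begin{pmatrix}1&\delta\\0&1\end{pmatrix}$ with $\delta=qy^k$ shifts $\alpha$ by a multiple of $\delta\gamma=qy^r$, so we can take $p=0$. Non-vanishing along $\{y=0\}$ forces $s\neq 0$, because $\beta$ and $\gamma$ both vanish there. This gives case (4).

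\emph{Case $\gamma\neq 0$ with $k=0$.} Here $\phi\in\mathfrak{sl}_2\otimes\Span\{x,y^r\}$. If the two coefficient matrices can be made simultaneously upper triangular, we are back in the case $\gamma=0$. Otherwise we repeat the normalization from the classical proof with $y$ replaced by $y^r$: conjugate so that the $y^r$-coefficient matrix is $\begin{pmatrix}0&u\\1&0\end{pmatrix}$, arriving at case (1). The discriminant condition $s^2-4t^2u\neq0$ is precisely the condition that the two coefficient matrices do not commute, i.e. are not simultaneously triangularizable. The $t\mapsto -t$ ambiguity comes from the same conjugation as before.

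For the invariant sub-line bundles and the irreducibility claims, we check when $-\det\phi=\alpha^2+\beta\gamma$ is a square in $\bigoplus_m H^0((L_b)^m)$ and then compute eigenvectors. For case (4), $\det\phi$ is a square exactly when $\beta=2stxy^k+t^2y^{r+k}$, in which case $s^2x^2+\beta y^{r-k}=(sx+ty^r)^2$. The eigenvectors are then $(ty^k,-1)$ and $(2sx+ty^r,\,y^{r-k})$ up to sign conventions, which we verify directly.

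The main obstacle is the classification of the cases where $\gamma\neq 0$ and $0\le k<r$. There one must show that the normal forms are genuinely distinct up to conjugation, and that the irreducibility criteria are exactly right (for example, that no other $\beta$ in case (4) yields a square discriminant). I will handle this by writing $\beta$ in its monomial basis $\{x^iy^{r+k-ri}\}$ and comparing coefficients in $\alpha^2+\beta\gamma=g^2$ with $g\in\Span\{x,y^r\}$.
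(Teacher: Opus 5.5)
Your plan is essentially the paper's proof. It uses the same splitting $\sEnd_0(E)\otimes L_b^r\simeq L_b^{r-k}\oplus L_b^r\oplus L_b^{r+k}$ and the same upper-triangular automorphisms. The $r\mid k$ cases are treated as in the classical proposition with $y$ replaced by $y^r$. For $r\nmid k$, sections of $L_b^{r\pm k}$ vanish along $V(y)$, which forces $\alpha(1,0)\neq 0$, and then you split on the lower-left entry. The only organizational difference is that you split on $\gamma$ first rather than on whether $r\mid k$.

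One justification is wrong and needs fixing. In the $k=0$ case, $s^2-4t^2u\neq 0$ is \emph{not} the condition that $A=\begin{pmatrix}t&s\\0&-t\end{pmatrix}$ and $B=\begin{pmatrix}0&u\\1&0\end{pmatrix}$ fail to commute. Indeed $[A,B]=\begin{pmatrix}s&2tu\\-2t&-s\end{pmatrix}$ vanishes only when $s=t=0$. For $(t,s,u)=(1,2,1)$ the matrices do not commute, yet they share the eigenvector $(1,-1)$, and $-\det\phi=(x+y^r)^2$. So this $\phi$ belongs to case (3). If you used non-commutativity as the criterion, it would land in case (1), contradicting the claim that case (1) always has irreducible spectral curve.

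Commuting is strictly stronger than simultaneous triangularizability. The correct criterion for $2\times 2$ matrices is $\det[A,B]=0$, and here $\det[A,B]=4t^2u-s^2$, which gives exactly the stated condition. Concretely, non-vanishing along $V(y)$ forces $(s,t)\neq(0,0)$, so $(s,-2t)$ is an eigenvector of $A$. It is an eigenvector of $B$ exactly when $s^2=4t^2u$. This supplies the direction the paper asserts: when $s^2=4t^2u$, the two coefficient matrices can be made simultaneously upper triangular.
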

    \begin{proof}
        The cases where $H\simeq (L_b)^{-kr}$ for $k\geq 0$ are analogous to Proposition \ref{Classical-extension} with $y$ replaced by $y^r$. If $H\simeq L_b^{-k}$ for some $k>0$ which is not a multiple of $r$, then $\sEnd_0(E)\otimes L_b^r\simeq L_b^{r-k}\oplus L_b^r\oplus L_b^{r+k}$. In particular, since any section of $L_b^{r+k}$ or $L_b^{r-k}$ will vanish at $(1,0)$, the fact that $\phi$ is nowhere vanishing implies that the $L_b^{r}$ component does not vanish at $(1,0)$. If the $L_b^{r-k}$ component of $\phi$ is non-zero (in which case $k<r$), then $\phi$ is of the form $$\begin{pmatrix}
            \alpha & \beta\\cy^{r-k} & -\alpha
        \end{pmatrix}$$ for some $c \in \C^*$, $\alpha \in H^0(X,L_b^r)$, $\beta\in H^0(X,L_b^{r+k})$. Conjugating by $$\begin{pmatrix}
            c & -\alpha(0,1)y^k\\0 & 1
        \end{pmatrix}$$
        gives $\phi$ of type (4).

        If instead the $L_b^{r-k}$-component of $\phi$ is zero, we will have $\phi$ of the form $$\begin{pmatrix}
            \alpha & \beta\\0 & -\alpha
        \end{pmatrix}$$ for $\alpha \in H^0(X,L_b^r)$ with $\alpha(1,0)\neq 0$ and $\beta \in H^0(X,L_b^{r+k})$. Conjugating with an automorphism of the form $$\begin{pmatrix}
            s & f\\ 0 & 1
        \end{pmatrix}$$ with $s\in \C^*$ and $f \in H^0(X,L_b^k)$ gives $$\begin{pmatrix}
            \alpha & s\beta-2f\alpha\\
             0 & -\alpha
        \end{pmatrix}.$$
        Notice that the subspaces $\beta\cdot \C$ and $\alpha\cdot H^0(X,L_b^k)$ together span $H^0(X,L_b^{r+k})$, so appropriate choices of $s$ and $f$ can make the top-right component into any section that does not have $\alpha$ as a factor. Since $\alpha(1,0)\neq 0$, we can take the top-right component to be $y^{r+k}$, making $\phi$ of type (5).
    \end{proof}

    \begin{proposition}
        Let $X$ be a resonant Hopf surface, and let $\ell$ be an integer with $0<\ell<r$. The $L_b^{\ell}$-twisted Higgs bundles on $X$ of the form $(\mc{O}_X\oplus H, \phi)$, up to conjugation by an automorphism of $\mc{O}_X\oplus H$ are as follows:
        \begin{enumerate}
            \item $H\simeq (L_b)^{\ell-kr}$ for some $k>0$ and $$\phi=\begin{pmatrix}
        s y^\ell & x^k\\ 0 & -s y^\ell
        \end{pmatrix}$$
        for some $s \in \C^*$. The $\phi$-invariant sub-line bundles are the first direct summand $\mc{O}_X$ and $(L_b)^{-kr}$ under the inclusion $(x^k,-2s y^\ell)$.
        \item $H\simeq (L_b)^{-\ell}$ and $$\phi=\begin{pmatrix}
        0 & \beta\\ 1 & 0
        \end{pmatrix}$$ for $\beta \in H^0(X,(L_b)^{2\ell})$. This Higgs field will have irreducible spectral curve if $\beta$ is not a square (only possible when $2\ell=r$). When $\beta=t^2$ for some $t \in H^0(X,(L_b)^\ell)$, the $\phi$-invariant sub-line bundles are $L_b^{-\ell}$ under the inclusions $(\pm t, 1).$
        \item (only if $\frac{r}{2}<\ell<r$) $H\simeq (L_b)^{\ell-r}$ and $$\phi=\begin{pmatrix}
        0 & \beta\\ y^{2\ell-r} & 0
    \end{pmatrix}$$ for $\beta\in H^0(X,(L_b)^r)$ with $\beta(1,0)\neq 0$. This Higgs field will always have irreducible spectral curve.
        \end{enumerate}
    \end{proposition}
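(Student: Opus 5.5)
The strategy mirrors the proof of the previous proposition: split according to which summands of $\sEnd_0(E)\otimes L_b^\ell$ have sections, then use automorphisms of $E$ to normalize. Write $H\simeq L_\tau$ with $\deg(H)\leq 0$. Then
$$\sEnd_0(E)\otimes L_b^\ell\simeq L_b^\ell\oplus (L_b^\ell\otimes H)\oplus (L_b^\ell\otimes H^{-1}).$$
On a resonant surface $H^0(X,L_b^m)$ is spanned by the monomials $x^iy^j$ with $ri+j=m$. Since $0<\ell<r$, the diagonal entry $\alpha\in H^0(X,L_b^\ell)$ must be $sy^\ell$ with $s\in\C$, and it vanishes along $\{y=0\}$. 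So the whole argument comes down to when $\phi$ can avoid vanishing on the curve $\{y=0\}$ (away from codimension $2$). Along $\{y=0\}$ only the two off-diagonal entries can be non-zero. An off-diagonal entry in $H^0(X,L_b^m)$ is non-zero on $(x,0)$ only when $m$ is a non-negative multiple of $r$, in which case it contains $x^{m/r}$. Hence at least one of $\ell\pm\deg(H)$ (reading $H=L_b^{-h}$) must be a multiple of $r$. We may also assume $H\in\Div$, since type (2) bundles were already handled.

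Writing $H=L_b^{-h}$ with $h\geq 0$, the off-diagonal entries lie in $L_b^{\ell+h}$ (upper right) and $L_b^{\ell-h}$ (lower left). The cases are as follows.
\begin{itemize}
\item If $\ell+h=kr$ with $k>0$, the lower-left entry is in $L_b^{\ell-h}$ with $\ell-h<r$. If $\ell-h\geq 0$ it is a multiple of $y^{\ell-h}$.
\item If $h=\ell$, the lower-left entry is a constant $c$.
\item If $\ell-h<0$, it vanishes. In that subcase $\phi=\begin{pmatrix} sy^\ell&\beta\\0&-sy^\ell\end{pmatrix}$, and we need $s\neq0$ (otherwise $\phi$ is nilpotent but still non-vanishing in codimension $1$; I would check whether this degenerate case must be excluded by the nowhere-vanishing requirement or absorbed). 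We need $\beta(1,0)\neq0$, i.e. the coefficient of $x^k$ is non-zero. Conjugating by $\begin{pmatrix}c&f\\0&1\end{pmatrix}$ with $f\in H^0(X,L_b^{h})$ replaces $\beta$ by $c\beta-2fsy^\ell$. Since $\alpha\cdot H^0(X,L_b^h)$ together with $\C\beta$ spans $H^0(X,L_b^{kr})$ (the same spanning argument as before, now with $y^\ell$ as the factor), we can normalize $\beta=x^k$. This gives type (1), with $H\simeq L_b^{\ell-kr}$.
\end{itemize}
The invariant sub-line bundles follow from the eigenvector computation, giving $(x^k,-2sy^\ell)$.

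In the case $h=\ell$, if $c\neq0$ we conjugate by $\begin{pmatrix}c&-\alpha'\\0&1\end{pmatrix}$, where $\alpha'$ is chosen to kill the diagonal. The diagonal can be killed because $L_b^\ell\otimes H=\mc{O}_X$, so the upper-triangular automorphisms have entries in $H^0(X,L_b^{\ell})$, exactly as in the classical type (2) argument. This yields $\begin{pmatrix}0&\beta\\1&0\end{pmatrix}$ with $\beta\in H^0(X,L_b^{2\ell})$. Such a $\beta$ is a combination of $y^{2\ell}$ and possibly $xy^{2\ell-r}$, and the latter exists only when $2\ell\geq r$. For it to be a non-square we need $x$ to appear, and since $2\ell<2r$ the statement's claim that this forces $2\ell=r$ must be checked: for $2\ell>r$, $\beta=axy^{2\ell-r}+by^{2\ell}$ vanishes on $y=0$, and non-vanishing of $\phi$ there then fails, forcing $\beta$ to be a constant multiple of $y^{2\ell}$, a square, unless $2\ell=r$. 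When $c=0$ the matrix is triangular with all entries vanishing on $y=0$ unless $2\ell=kr$, which reduces to the previous analysis. Eigenvectors give $(\pm t,1)$.

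The remaining case is $0\leq h<\ell$ with $\ell-h\not\equiv0$ and $\ell+h\not\equiv 0 \pmod r$. The only survivor is $\ell-h=$ a negative multiple of $r$, impossible here, or $h-\ell=r$ with $h=r-\ell$ rather than $h<\ell$. So reparametrize: take $H\simeq L_b^{\ell-r}$, which has non-positive degree. Then the upper-right entry lies in $L_b^{r}$ and must satisfy $\beta(1,0)\neq0$, and the lower-left entry lies in $L_b^{2\ell-r}$, which must be non-zero with $2\ell-r>0$ (otherwise we fall back into type (1) with $k=1$). This gives $y^{2\ell-r}$ after scaling. I would then use conjugation by $\begin{pmatrix}1&0\\g&1\end{pmatrix}$-type or diagonal automorphisms to kill the diagonal $sy^\ell$, which is possible since $y^\ell=y^{2\ell-r}\cdot y^{r-\ell}$, and arrive at type (3). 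Irreducibility holds because $\det=-\beta y^{2\ell-r}$ has $y$ to an odd-or-unmatched power combined with $\beta$ containing $x$, so it is not a square in the graded ring.

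The main obstacle I expect is the bookkeeping of which $H$ are admissible. Concretely this means (a) proving the list of admissible $H$ is exhaustive using the vanishing-on-$\{y=0\}$ criterion together with $h\geq0$, and (b) verifying in case (3) that the diagonal term can really be conjugated away and that no further normalization of $\beta$ is possible. I would handle (a) first by a small table of residues of $\ell\pm h$ mod $r$.
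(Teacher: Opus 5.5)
Your reduction is the same as the paper's. The diagonal entry is $sy^\ell$, which vanishes on $\{y=0\}$, so some off-diagonal entry must be non-zero there. That forces $h=\ell$ or $\ell+h=kr$ with $k\geq 1$, and the normal forms then come from upper-triangular automorphisms exactly as in the paper.

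The step that fails is your argument for the parenthetical in (2). For $\phi=\begin{pmatrix}0&\beta\\1&0\end{pmatrix}$ the lower-left entry is the constant $1$, so $\phi$ is nowhere vanishing for every $\beta\in H^0(X,(L_b)^{2\ell})$. The vanishing of $\beta$ along $\{y=0\}$ therefore imposes no condition, and nothing forces $\beta\in\C y^{2\ell}$ when $2\ell>r$. In fact the parenthetical is false as stated. For $r<2\ell<2r$, take $\beta=xy^{2\ell-r}$. Since $H^0(X,(L_b)^\ell)=\C y^\ell$, this $\beta$ is not $t^2$ for any section $t$. Moreover $\eta^2=xy^{2\ell-r}$ is irreducible, because the factor $x$ occurs to the first power. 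So non-square $\beta$ exist exactly when $2\ell\geq r$, and that is the statement you can prove. The paper's proof only derives the normal forms and never addresses this claim, so it offers no argument you could substitute.

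A few smaller points also need fixing.

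\begin{itemize}
\item In type (1) you leave $s=0$ open and suggest $\phi$ might then not vanish in codimension one; it does. With $s=0$, $\phi$ reduces to $\beta\in H^0(X,(L_b)^{kr})$, which is a binary form of degree $k\geq 1$ in $x$ and $y^r$. It factors into linear forms $\mu x+\nu y^r$, so $\phi$ vanishes on the corresponding elliptic curves and $s=0$ is excluded. (The paper simply asserts $s\in\C^*$.)
\item In type (3) there are no lower-triangular automorphisms, since $H^0(X,H)=H^0(X,(L_b)^{\ell-r})=0$. Diagonal automorphisms cannot change the diagonal of a trace-free field either. What you need is $\begin{pmatrix}1&uy^{r-\ell}\\0&1\end{pmatrix}$, which is where your factorization $y^\ell=y^{2\ell-r}y^{r-\ell}$ actually enters. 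Conjugating by it shifts the diagonal by $\pm ucy^\ell$ and changes $\beta$ only by a multiple of $y^r$. So a suitable $u$ kills the diagonal while preserving $\beta(1,0)\neq 0$.
\item Type (3) is not a ``remaining case''. It is the subcase $k=1$, $h=r-\ell$ of your first bullet, with non-zero lower-left entry. When neither $h=\ell$ nor $\ell+h\equiv 0\pmod r$ holds, all three entries vanish on $\{y=0\}$ and nothing survives.
\end{itemize}
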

    \begin{proof}
        Suppose that $\mc{O}_X\oplus L_b^{-n}$ admits a nowhere-vanishing $L_b^\ell$-twisted Higgs field for some $n \in \Z_{\geq 0}$. Since we have assumed $0<\ell<r$, this implies that either $n=\ell$ so that $\sEnd_0(E)\otimes L_b^{\ell}\simeq \mc{O}_X\oplus L_b^{\ell}\oplus L_b^{2\ell}$ or $n=kr-\ell$ for some $k\geq 0$, so that $\sEnd_0(E)\otimes L_b^\ell\simeq L_b^{2\ell-kr}\oplus L_b^\ell\oplus L_b^{kr}.$

        In the case $n=\ell$, $\phi$ will be of the form $$\begin{pmatrix}
            \alpha & \beta\\c & -\alpha
        \end{pmatrix}$$ for some $c\in \C^*$, $\alpha \in H^0(X,L_b^\ell)$, and $\beta \in H^0(X,L_b^{2\ell})$. Conjugating with the automorphism $$\begin{pmatrix}
            c & -\alpha\\ 0 & 1
        \end{pmatrix}$$ gives $\phi$ of type (2).

        In the case $n=kr-\ell$ for some $k>0$, there are two possibilities depending on whether the $L_b^{2\ell-kr}$-component of $\phi$ is non-zero. If the $L_b^{2\ell-kr}$-component is zero, then $\phi$ is of the form $$\begin{pmatrix}
            sy^\ell & \beta\\0 & -sy^\ell
        \end{pmatrix}$$ for $s \in \C^*$ and $\beta \in H^0(X,L_b^{kr})$ with $\beta(1,0)\neq0$. Conjugating with an automorphism of the form $$\begin{pmatrix}
            t & fy^{r-\ell}\\ 0 & 1
        \end{pmatrix}$$ for $t \in \C^*$ and $f \in H^0(X,L_b^{(k-1)r})$ gives $$\begin{pmatrix}
            sy^\ell & t\beta-2sfy^r\\
             0 & -sy^\ell
        \end{pmatrix}.$$
        Note that the subspaces $\beta\cdot \C$ and $y^r\cdot H^0(X,L_b^{(k-1)r})$ span $H^0(X,L_b^{kr})$, so the top-right component of $\phi$ can be conjugated to any section which does not vanish on $(0,1)$. Choosing the top-right component to be $x^k$ gives $\phi$ of type (1).

        If the $L_b^{2\ell-kr}$-component is non-zero (and $n\neq -\ell)$, then we must have $2\ell-kr>0$, so that $k=1$ and $\ell>\frac{r}{2}$. In this case, $\phi$ is of the form $$\begin{pmatrix}
            \alpha & \beta\\  cy^{2\ell-r} & -\alpha
        \end{pmatrix}$$ for $c \in \C^*$, $\alpha \in H^0(X,L_b^\ell)$ and $\beta \in H^0(X,L_b^r)$ with $\beta(0,1)\neq 0$. Conjugating with the automorphism $$\begin{pmatrix}
            c & -\alpha\\ 0 & 1
        \end{pmatrix}$$ gives $\phi$ of type (3).
    \end{proof}

    \subsection{The Hyper-resonant case}
    When $X$ is a hyper-resonant Hopf surface, $H^0(X,\mc{T}_X\otimes \lambda^{-1})\neq 0$ and $\deg(\lambda)>0$ imply $\lambda\simeq L_a$, $\lambda\simeq L_b$, or $\lambda\simeq L_a\otimes (L_b)^{-\ell}$ for some integer $\ell$ with $0<\ell<\frac{r_2}{r_1}$.

    \begin{proposition}\label{hyper-extension}
        Let $X$ be a hyper-resonant Hopf surface given by the contraction $(x,y)\mapsto (ax,by)$ where $a^{r_1}=b^{r_2}$ and $r_1\leq r_2$. 
        \begin{enumerate}
            \item The nowhere-vanishing $L_a$-twisted Higgs bundles of the form $(\mc{O}_X\oplus H,\phi)$, up to conjugation by an automorphism of $\mc{O}_X\oplus H$, are as follows:
        \begin{enumerate}
            \item $H\simeq L_a^{-1}$ and $$\phi=\begin{pmatrix}
                0 & \beta\\ 1 & 0
            \end{pmatrix}$$ form some $\beta \in H^0(X,L_a^2)$. This Higgs field has irreducible spectral curve when $r_1=2$ and $\beta$ is not a square. When $\beta=t^2$ for some $t \in H^0(X,L_a)$, the $\phi$-invariant sub-line bundles are $(L_a)^{-1}$ under the inclusions $(\pm t, 1)$.
            \item $H\simeq L_a\otimes L_b^{-k}$ with $k\geq \frac{r_2}{r_1}$ and $$\phi=\begin{pmatrix}
                sx & y^k\\0 & -sx
            \end{pmatrix}$$ for some $s \in \C^*$. The $\phi$-invariant sub-line bundles are the first direct summand $\mc{O}_X$ and $(L_b)^{-k}$ under the inclusion $(y^k, -2sx).$
            \item (Only for $r_1=2$) $H\simeq L_a\otimes L_b^{-k}$ with $\frac{r_2}{2}\leq k<r_2$ and $$\phi=\begin{pmatrix}
                sx & ty^k\\ y^{r_2-k} & -sx
            \end{pmatrix}$$
            for some $s\in \C^*$ and $t\in \C$. This Higgs field has irreducible spectral curve if $t\neq 0$. For $t=0$, the $\phi$-invariant sub-line bundles are the second direct summand $H$ and $(L_a)^{-1}$ under the inclusion $(2sx, y^{r_2-k})$.
            \item (Only for $r_1>2$) $H\simeq L_b^k\otimes L_a^{-1}$ with $0<k< \frac{r_2}{r_1}$ and $$\phi=\begin{pmatrix}
                sx & 0\\y^k & -sx
            \end{pmatrix}$$ for some $s\in \C^*$. The $\phi$-invariant sub-line bundles are the second direct summand $H$ and $(L_a)^{-1}$ under the inclusion $(2sx,y^k)$.
        \end{enumerate}
        \item The nowhere-vanishing $L_b$-twisted Higgs bundles of the form $(\mc{O}_X\oplus H, \phi)$, up to conjugation by an automorphism of $\mc{O}_X\oplus H$, are as follows: 
        \begin{enumerate}
            \item $H\simeq L_b^{-1}$ and $$\phi=\begin{pmatrix}
                0 & \beta\\1 & 0
            \end{pmatrix}$$ for some $\beta \in H^0(X,L_b^2)$. This Higgs field has irreducible spectral curve when $r_2=2$ and $\beta$ is not a square. When $\beta=t^2$ for some $t \in H^0(X,L_b)$, the $\phi$-invariant sub-line bundles are $L_b^{-1}$ under the inclusions $(\pm t, 1).$
            \item $H\simeq L_b\otimes L_a^{-k}$ with $k>0$ and $$\phi=\begin{pmatrix}
                sy & x^k\\
                0 & -sy
            \end{pmatrix}$$ for some $s \in \C^*$. The $\phi$-invariant sub-line bundles are the first direct summand $\mc{O}_X$ and $(L_a)^{-k}$ under the inclusion $(x^k, -2sy).$
            \item (Only for $r_1=r_2=2$) $H\simeq L_b\otimes L_a^{-1}$ and $$\phi=\begin{pmatrix}
                sy & tx\\ x & -sy
            \end{pmatrix}$$ for some $a\in \C^*$ and $b \in \C$. This Higgs field is irreducible if $t\neq 0$. When $t=0$, the $\phi$-invariant sub-line bundles are the second direct summand $H$ and $(L_b)^{-1}$ under the inclusion $(2sy, x)$.
        \end{enumerate}
        \item If $\ell$ is an integer with $0<\ell<\frac{r_2}{r_1}$, the $\lambda=L_a\otimes L_b^{-\ell}$-twisted Higgs bundles are all of the form $$\left(\mc{O}_X\oplus (L_b^\ell\otimes L_a^{-1}), \begin{pmatrix}
            0 & \beta\\1 & 0
        \end{pmatrix}\right)$$ for some $\beta \in H^0(X,\lambda^2)$. (If $r_1=2$, then $\beta=ty^{r_2-2\ell}$ for some $t \in \C$, and $\beta=0$ otherwise.) This Higgs field has irreducible spectral curve when $r_2$ is odd and $\beta\neq 0$. When $\beta=u^2$ for some $u \in H^0(X,L_{-1}\otimes \lambda)$, the phi-invariant sub-line bundles are $\lambda^{-1}$ under the inclusions $(\pm u, 1)$.
        \end{enumerate}
    \end{proposition}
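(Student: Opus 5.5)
The plan follows the template of the classical and resonant propositions. Write $E=\mc{O}_X\oplus H$ with $H\simeq L_a^{k}\otimes L_b^{m}$ in $\Div$ and $\deg(H)\leq 0$. Then
$$\sEnd_0(E)\otimes\lambda\simeq \lambda\oplus(\lambda\otimes H)\oplus(\lambda\otimes H^{-1}),$$
and a Higgs field is
$$\phi=\begin{pmatrix}\alpha & \beta\\ \gamma & -\alpha\end{pmatrix}$$
with $\alpha\in H^0(X,\lambda)$, $\beta\in H^0(X,\lambda\otimes H^{-1})$ and $\gamma\in H^0(X,\lambda\otimes H)$. All sections are computed from the hyper-resonant row of Table \ref{line-bundles-Hopf}, i.e. they are spanned by monomials $x^iy^j$ with $a^ib^j$ equal to the relevant factor of automorphy.

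\textbf{First step: the non-vanishing condition.} Check when $\phi$ vanishes only in codimension 2. Since $W$ has dimension 2, this means $V(\alpha,\beta,\gamma)=\emptyset$ on $W$, i.e. $\phi$ must be nonzero along both axes $\{y=0\}$ and $\{x=0\}$.
\begin{itemize}
\item For $\lambda=L_a$ (with $r_1\leq r_2$), $H^0(L_a)$ is spanned by $x$, together with $y^{r_2/r_1}$ only when $r_1=1$, which is excluded by hyper-resonance. So $\alpha=sx$, and some component must be nonzero on $\{x=0\}$. That forces either $\gamma$ or $\beta$ to be a pure power of $y$ (up to scalar), or a constant.
\item Requiring $a^{-1}H$ or $aH^{-1}$ (written multiplicatively in factors of automorphy) to equal $b^j$, or $1$, for a suitable $j\geq 0$ then pins down the possible $H$: $H\simeq L_a^{-1}$; $H\simeq L_a\otimes L_b^{-k}$; or $H\simeq L_b^k\otimes L_a^{-1}$ with $\deg(H)\leq 0$.
\item The constraint $\deg\leq 0$, translated into $k$ versus $r_2/r_1$, produces the ranges in (a)--(d).
\item When both off-diagonal entries can be nonzero, the relation $a^{r_1}=b^{r_2}$ forces $r_1=2$. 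This happens because $\lambda^2\otimes H^{\mp1}$ must then have a monomial section, whence the ``only for $r_1=2$'' qualifiers.
\end{itemize}
Cases (2) and (3) go the same way with the roles of $x,y$ swapped and $\lambda=L_b$ or $\lambda=L_a\otimes L_b^{-\ell}$.

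\textbf{Second step: normal forms.} These come by conjugation with automorphisms of $E$, using the same moves as in Proposition \ref{Classical-extension}:
\begin{itemize}
\item When $\gamma$ is a nonzero constant, conjugate by $\begin{pmatrix}c&-\alpha\\0&1\end{pmatrix}$ to kill $\alpha$; this gives the companion form $\begin{pmatrix}0&\beta\\1&0\end{pmatrix}$.
\item When $\gamma=0$, use the upper unipotent automorphisms $\begin{pmatrix}s&f\\0&1\end{pmatrix}$, $f\in H^0(H^{-1})$, which shift $\beta$ by $-2f\alpha$. One must check that $\beta\cdot\C+\alpha\cdot H^0(X,H^{-1})$ spans $H^0(X,\lambda\otimes H^{-1})$. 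Since $\alpha=sx$ (or $sy$), this reduces $\beta$ to the unique monomial not divisible by $x$ (resp. $y$), namely $y^k$ (resp. $x^k$).
\item When $\gamma$ is a pure power of $y$ and nonconstant, conjugate by a diagonal automorphism to normalize it, and the remaining freedom leaves a scalar $t$ in $\beta$.
\end{itemize}
In (3), $H^0(\lambda)=0$ because $0<\ell<r_2/r_1$, so $\alpha=0$. Moreover $\lambda\otimes H\simeq\mc{O}_X$ forces $\gamma$ constant, and $H^0(\lambda^2)$ is nonzero only if $r_1=2$, spanned by $y^{r_2-2\ell}$.

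\textbf{Last step: spectral curves and invariant sub-line bundles.} Reducibility of the spectral curve is equivalent to $-\det\phi=\alpha^2+\beta\gamma$ being a square in the appropriate section ring. The invariant sub-line bundles are found as eigenvectors, e.g. $(\beta,-2\alpha)$ for the upper triangular forms, as was done in the resonant case.

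The main obstacle is the first step: the bookkeeping that enumerates all $H$ allowing a nowhere-vanishing $\phi$. This requires care with the exponent relations $a^ib^j=a^{k'}b^{m'}$ modulo $a^{r_1}=b^{r_2}$, including the torsion $\Z/d$ in $\Div$. To get through it, I would first list the monomial sections of $\lambda$ and of $\lambda^{2}$ and then test each off-diagonal possibility against the requirement of nonvanishing at $(1,0)$ and $(0,1)$.
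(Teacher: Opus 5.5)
Your outline follows the paper's proof. It uses the same splitting $\sEnd_0(E)\otimes\lambda\simeq\lambda\oplus(\lambda\otimes H)\oplus(\lambda\otimes H^{-1})$. It uses the same dichotomy: either one summand is trivial, or one summand is a positive power of $L_a$ and another a positive power of $L_b$. It uses the same conjugating automorphisms, with your monomial bookkeeping in place of the paper's floor-function test from Table~\ref{line-bundles-Hopf}.

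The one step the paper writes out that you only assert is that $H\simeq\lambda^{-1}$ in case~(3). Your approach gives it quickly. If $\lambda\otimes H\simeq L_a^m$ and $\lambda\otimes H^{-1}\simeq L_b^n$ with $m,n\geq 0$, then $\lambda^2\simeq(\lambda\otimes H)\otimes(\lambda\otimes H^{-1})$ gives $a^{m-2}b^{n+2\ell}=1$. All relations between $a$ and $b$ are multiples of $a^{r_1}=b^{r_2}$, and $n+2\ell>0$, so this forces $m\leq 2-r_1\leq 0$, i.e.\ $\lambda\otimes H\simeq\mc{O}_X$. The reversed assignment gives $H\simeq\lambda$, which is excluded by $\deg(H)\leq 0$.

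Your last step (eigenvectors and squares) will not confirm the final sentence of case~(3). Take $r_1=2$, $r_2$ even and $\beta=ty^{r_2-2\ell}\neq 0$. Minimality gives $b^{r_2/2}=-a$, so $u=\sqrt{t}\,y^{r_2/2-\ell}$ is a section of $L_{-1}\otimes\lambda$ but not of $\lambda$. Hence $(\pm u,1)$ do not define maps $\lambda^{-1}\to E$. In fact $H^0(X,\lambda)=0$, so any $\phi$-invariant sub-line bundle would have eigenvalue $0$ and force $\beta=0$. On $W$ the two eigenlines $(\pm u,1)$ are exchanged by the deck transformation, so the spectral curve is irreducible in this case too. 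That sentence of the statement is false as written and needs correcting; the paper's proof does not address it either. Everything else goes through as you describe.
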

    \begin{proof}
        Note that cases (1) and (2) are identical up to swapping $a$ with $b$ and $r_1$ with $r_2$, so we will only consider cases (1) and (3). (Note that for the $L_b$-twisted Higgs bundles there is no analogue of case (1d) since there is no integer $k$ satisfying $0<k<\frac{r_1}{r_2}\leq 1.$)

        Suppose that $\mc{O}_X\oplus H$ admits a nowhere-vanishing $\lambda$-twisted Higgs field. Then either $H\simeq \lambda^{-1}$ with the corresponding component non-zero, or one of $\lambda$, $\lambda\otimes H$, and $\lambda\otimes H^{-1}$ is of the form $L_a^k$ for some $k>0$ while another is of the form $L_b^m$ for some $m>0$.

        When $H\simeq \lambda^{-1}$ with the $\mc{O}_X$ term non-zero, $\phi$ has the form $$\begin{pmatrix}
            \alpha & \beta\\ c & -\alpha
        \end{pmatrix}$$ for some $c \in \C^*$, $\alpha \in H^0(X,\lambda)$, $\beta \in H^0(X,\lambda^2)$. conjugating with the automorphism $$\begin{pmatrix}
            c & -\alpha\\0 & 1
        \end{pmatrix}$$ gives $\phi$ of type (1a), (2a), or (3), depending on $\lambda$.

        Now suppose that $\lambda\simeq L_a$ and that either $L_a\otimes H$ or $L_a\otimes H^{-1}$ has the form $L_b^k$ for some $k>0$. If $L_a\otimes H^{-1}$ is of this form, then $H\simeq L_a\otimes L_b^{-k}$, and $\sEnd_0(\mc{O}_X\oplus H)\otimes L_a\simeq L_a\oplus (L_a^2\otimes L_b^{-k})\oplus L_b^k$. By our assumption that $\deg(H)\leq 0$, necessarily $k\geq \frac{r_2}{r_1}$. We will first assume that the $L_a^2\otimes L_b^{-k}$-component of $\phi$ is zero, so that $\phi$ is of the form $$\begin{pmatrix}
            sx & \beta\\0 & -sx
        \end{pmatrix}$$ for some $s \in \C^*$ and $\beta \in H^0(X,L_b^k)$ with $\beta(0,1)\neq 0$. If $k<r_2$, then $\beta=ty^k$ for some $t \in \C^*$, and conjugating with the automorphism $$\begin{pmatrix}
            1 & 0\\0 & t
        \end{pmatrix}$$ gives $\phi$ of type (1b). If $k\geq r_2$, conjugating with an automorphism of the form $$\begin{pmatrix}
            t & x^{r_1-1}f\\0 & 1
        \end{pmatrix}$$ for $t \in \C^*$ and $f \in H^0(X, L_b^{k-r_2})$ gives $$\begin{pmatrix}sx & t\beta-2sx^{r_1}f\\
        0 & -sx\end{pmatrix}.$$ The subspaces $\beta\cdot \C$ and $x^{r_1}\cdot H^0(X,L_b^{k-r_2})$ together span $H^0(X,L_b^k)$, so we can choose $t$ and $f$ so that $t\beta-2sx^{r_1}f=y^k$, so that $\phi$ is again of type (1b).

        If instead the $L_a^2\otimes L_b^{-k}$-component of $\phi$ does not vanish, then by Table \ref{line-bundles-Hopf} we must have $\lfloor \frac{2}{r_1}\rfloor+\lfloor\frac{-k}{r_2}\rfloor\geq 0$. Noting that $\lfloor \frac{-k}{r_2}\rfloor\leq -1$, this is only possible if $r_1=2$ and $\frac{r_2}{r_1}\leq k\leq r_2$. We can exclude the case with $r_1=2$ and $k=r_2$, as then $L_a\otimes H\simeq \mc{O}_X$, putting us in the case of type (1a). Now we have $$\phi=\begin{pmatrix}
            sx & ty^k\\uy^{r_2-k} & -sx
        \end{pmatrix}$$ for some $s,u \in \C^*$ and $t \in \C$. Conjugating with the automorphism $$\begin{pmatrix}
            u & 0\\0 & 1
        \end{pmatrix}$$ gives $\phi$ of type (1c).

        Now suppose that the $L_a$-twisted Higgs bundle $(\mc{O}_X\oplus H, \phi)$ is not of one of the above types, so that $H\simeq L_b^k\otimes L_a^{-1}$ for some $k>0$ and $H^0(X,L_a\otimes H^{-1})=0$. Since $\deg(H)\leq 0$, we must have $0<k<\frac{r_2}{r_1}$. We can disregard the case where $r_1=2$, as then $L_b^k\otimes L_a^{-1}\simeq L_a\otimes L^{k-r_2}$ and $r_2-k\geq \frac{r_2}{2}$, so that $\phi$ is of type (1b) or (1c). We can also exclude the case where $k=\frac{r_2}{r_1}$, as in that case $\mc{O}_X\oplus (L_b^{r_2/r_1}\otimes L_a^{-1})$ can be renormalized to $\mc{O}_X\oplus (L_a\otimes L_b^{-r_2/r_1}).$ Now $\phi$ is of the form $$\begin{pmatrix}
            sx & 0\\uy^k & -sx
        \end{pmatrix}$$ for some $s,u\in \C^*$, so conjugating by the automorphism $$\begin{pmatrix}
            u &0\\0 & 1
        \end{pmatrix}$$ gives $\phi$ of type (1d).

        Now all that remains is to check  that when $\ell$ is an integer with $0<\ell<\frac{r_2}{r_1}$, then any $L_a\otimes L_b^{-\ell}$-twisted Higgs bundle is of type (3). Since $H^0(X,L_a\otimes L_b^{-\ell})=0$, an $L_a\otimes L_b^{-\ell}$-twisted Higgs bundle not of type (3) will have $L_a\otimes L_b^{-\ell}\otimes H\simeq L_a^m$ and $L_a\otimes L_b^{-\ell}\otimes H^{-1}\simeq L_b^n$ for some $m,n >0$, or vice versa. If $H\simeq L_a^j\otimes L_b^k$ for some $j,k \in \Z$, then either $j\equiv 1 \mod r_1$ and $k\equiv \ell \mod r_2$, or $j\equiv r_1-1 \mod r_1$ and $k \equiv r_2-\ell \mod r_2$. In either case, we must have $t+\lfloor\frac{2}{r_1}\rfloor\geq 0$ and $\lfloor\frac{-2\ell}{r_2}\rfloor-t\geq 0$ for some integer $t$, which forces $r_1=2$ and either $H\simeq L_a^{-1}\otimes L_b^\ell$ or $H\simeq L_a\otimes L_b^{-\ell}$. The second case can be excluded since $\deg(H)>0$, and the first case gives a Higgs bundle of type (3), so every $L_a\otimes L_b^{-\ell}$-twisted Higgs bundle is of type (3).
    \end{proof}

    \subsection{The Generic Case}
    When $X$ is a generic Hopf surface, $H^0(X,\mc{T}_X\otimes \lambda^{-1})\neq 0$ and $\deg(\lambda)>0$ imply $\lambda\simeq L_a$, $\lambda\simeq L_b$, or $\lambda \simeq L_a\otimes (L_b)^{-\ell}$ for some integer $\ell$ with $0<\ell<\frac{\log|a|}{\log|b|}$.

    \begin{proposition}
        Let $X$ be a generic Hopf surface given by the contraction $(x,y)\mapsto (ax,by)$ with $|a|\leq |b|$. 
        \begin{enumerate}
            \item The nowhere-vanishing $L_a$-twisted Higgs bundles of the form $(\mc{O}_X\oplus H,\phi)$, up to conjugation by an automorphism of $\mc{O}_X\oplus H$, are as follows:
        \begin{enumerate}
            \item $H\simeq L_a^{-1}$ and $$\phi=\begin{pmatrix}
                0 & tx^2\\ 1 & 0
            \end{pmatrix}$$ for some $t \in \C$. The $\phi$-invariant sub-line bundles are $L_a^{-1}$ under the inclusions $(\pm x\sqrt{t}, 1).$
            \item $H\simeq L_a\otimes L_b^{-k}$ with $k\geq \frac{\log{|a|}}{\log{|b|}}$ and $$\phi=\begin{pmatrix}
                sx & y^k\\0 & -sx
            \end{pmatrix}$$ for some $a \in \C^*$. The $\phi$-invariant sub-line bundles are the first direct summand $\mc{O}_X$ and $(L_b)^{-k}$ under the inclusion $(y^k,-2sx).$
            \item $H\simeq L_b^k\otimes L_a^{-1}$ with $0<k< \frac{\log{|a|}}{\log{|b|}}$ and $$\phi=\begin{pmatrix}
                sx & 0\\y^k & -sx
            \end{pmatrix}$$ for some $s\in \C^*$. The $\phi$-invariant sub-line bundles are the second direct summand $H$ and $(L_a)^{-1}$ under the inclusion $(2sx,y^k)$.
        \end{enumerate}
        \item The nowhere-vanishing $L_b$-twisted Higgs bundles of the form $(\mc{O}_X\oplus H, \phi)$, up to conjugation by an automorphism of $\mc{O}_X\oplus H$, are as follows: 
        \begin{enumerate}
            \item $H\simeq L_b^{-1}$ and $$\phi=\begin{pmatrix}
                0 & ty^2\\1 & 0
            \end{pmatrix}$$ for some $t \in \C$. The $\phi$-invariant sub-line bundles are $L_b^{-1}$ under the inclusions $\pm y\sqrt{t}, 1)$.
            \item $H\simeq L_b\otimes L_a^{-k}$ with $k>0$ and $$\phi=\begin{pmatrix}
                sy & x^k\\
                0 & -sy
            \end{pmatrix}$$ for some $s \in \C^*$. The $\phi$-invariant line bundles are the first direct summand $\mc{O}_X$ and $(L_a)^{-k}$ under the inclusion $(x^k, -2sy)$.
        \end{enumerate}
        \item If $\ell$ is an integer with $0<\ell<\frac{\log{|a|}}{\log{|b|}}$, the unique $\lambda=L_a\otimes L_b^{-\ell}$-twisted Higgs bundle is $$\left(\mc{O}_X\oplus (L_b^\ell\otimes L_a^{-1}), \begin{pmatrix}
            0 & 0\\1 & 0
        \end{pmatrix}\right).$$ This Higgs bundle has $\lambda^{-1}$ as its only $\phi$-invariant sub-line bundle.
        \end{enumerate}
    \end{proposition}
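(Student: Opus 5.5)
The plan is to follow the proof of Proposition \ref{hyper-extension} essentially verbatim. The only change is that the generic case has no resonances: by Table \ref{line-bundles-Hopf}, $h^0(X,L_a^k\otimes L_b^\ell)$ is $1$ when $k,\ell\geq 0$ and $0$ otherwise, and the unique section is $x^ky^\ell$. Cases (1) and (2) are symmetric under swapping $(a,x)$ with $(b,y)$. The only asymmetry comes from the normalization $\deg(H)\leq 0$ together with $|a|\leq|b|$, so I will treat (1) and (3) in detail and note where (2) differs. Throughout, write $E=\mc{O}_X\oplus H$ with $H=L_a^j\otimes L_b^m$. Then
$$\sEnd_0(E)\otimes\lambda\simeq \lambda\oplus(\lambda\otimes H^{-1})\oplus(\lambda\otimes H),$$
and each summand has at most a one-dimensional space of sections, spanned by a monomial.

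\textbf{Step 1: constrain $H$.} A nowhere-vanishing $\phi$ needs its monomial components to have no common zero on $W$. Monomials $x^py^q$ have a common zero on $W$ unless some component is a nonzero constant, or there is one pure power of $x$ and one pure power of $y$. So either:
\begin{enumerate}
\item one of $\lambda\otimes H^{\pm1}$ is trivial, giving $H\simeq\lambda^{\mp 1}$; or
\item the three summands $\lambda,\lambda\otimes H,\lambda\otimes H^{-1}$ include some $L_a^k$ with $k>0$ and some $L_b^m$ with $m>0$, matching the necessary condition stated earlier for type (1).
\end{enumerate}
I then run through these possibilities with $\lambda=L_a$, with $\lambda=L_b$, and with $\lambda=L_a\otimes L_b^{-\ell}$.
\begin{enumerate}
\item For $\lambda=L_a$, the three cases are $H\simeq L_a^{-1}$, $L_a\otimes H^{-1}\simeq L_b^k$, and $L_a\otimes H\simeq L_b^k$. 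These give (1a), (1b) and (1c). The condition $\deg(H)\leq 0$ gives $k\geq \log|a|/\log|b|$ in (1b) and $k<\log|a|/\log|b|$ in (1c); equality is excluded by genericity.
\item For $\lambda=L_b$, the analogue of (1c) would need $0<k<\log|b|/\log|a|\leq 1$. This is impossible, exactly as remarked in the proof of Proposition \ref{hyper-extension}.
\item For $\lambda=L_a\otimes L_b^{-\ell}$, $\lambda$ itself has no sections. So both other summands must be pure powers $L_a^p$ and $L_b^q$ with $p,q>0$, and their product is $\lambda^2=L_a^2\otimes L_b^{-2\ell}$. Then $\lambda^2\simeq L_a^p\otimes L_b^q$ forces $p=2$ and $q=-2\ell$, contradicting $q>0$. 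This rules out this branch entirely, so only $H\simeq\lambda^{-1}$ remains. Here $\lambda^{2}$ has no sections, which gives $\beta=0$ in (3).
\end{enumerate}

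\textbf{Step 2: normalize by automorphisms.} This copies the earlier propositions.
\begin{enumerate}
\item When $H\simeq\lambda^{-1}$, conjugate $\begin{pmatrix}\alpha&\beta\\c&-\alpha\end{pmatrix}$ by $\begin{pmatrix}c&-\alpha\\0&1\end{pmatrix}$. This kills the diagonal, and $\beta\in H^0(X,\lambda^2)$ is $tx^2$, $ty^2$, or $0$ respectively.
\item In (1b), the $L_a^2\otimes L_b^{-k}$ component has no sections, so $\phi$ is upper triangular with entries $sx$ and $ty^k$, $t\neq0$. Rescale by $\mathrm{diag}(1,t)$.
\item In (1c), the upper-right summand $L_a^2\otimes L_b^{-k}$ again has no sections, so rescale the lower-left entry $uy^k$.
\item (2b) is handled the same way.
\end{enumerate}

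\textbf{Step 3: invariant sub-line bundles.} These follow from the eigenvector computation $\det\phi$ together with explicit kernels, e.g. $(y^k,-2sx)$ spans the kernel of $\phi+sx$. Since every $\det\phi$ here is a perfect square, no spectral curve is irreducible.

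The main obstacle I expect is Step 1's bookkeeping, in particular checking that no other combination yields coprime monomials. The key facts are that generic monomials are unique and that $\lambda\otimes H$ and $\lambda\otimes H^{-1}$ multiply to $\lambda^2$.
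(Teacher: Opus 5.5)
Your outline follows the paper's proof. It uses the same splitting $\sEnd_0(E)\otimes\lambda\simeq\lambda\oplus(\lambda\otimes H^{-1})\oplus(\lambda\otimes H)$ into summands with at most one monomial section, the same case split on which summands supply a pure power of $x$ and of $y$, and the same conjugating automorphisms. In case (3), your observation that $\lambda\otimes H$ and $\lambda\otimes H^{-1}$ multiply to $\lambda^2$ is a tidier justification of the paper's bare claim that at most one summand has sections.

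The step that fails is ``equality is excluded by genericity.'' Genericity is a condition on $a,b$, not on $|a|,|b|$. For instance, $a=b^ke^{2\pi i\theta}$ with $\theta$ irrational is generic and has $\log|a|/\log|b|=k\in\Z_{>0}$. Then $H=L_b^k\otimes L_a^{-1}$ has degree $0$ and satisfies your normalization. Moreover, $\begin{pmatrix}sx&0\\y^k&-sx\end{pmatrix}$ is a nowhere-vanishing $L_a$-twisted field on $\mc{O}_X\oplus H$ that your argument discards on false grounds, since (1c) requires $k<\log|a|/\log|b|$.

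The same problem occurs in your $L_b$-twisted analogue of (1c). There the degree condition is $k\le\log|b|/\log|a|$, so when $|a|=|b|$ (also compatible with genericity) $k=1$ is allowed and $H=L_a\otimes L_b^{-1}$ is admissible.

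These boundary cases should be handled by renormalization, not genericity. When $\deg H=0$ we have $\mc{O}_X\oplus H\simeq(\mc{O}_X\oplus H^{-1})\otimes H$, and twisting by a line bundle does not change $\sEnd_0$. Twisting by $H^{-1}$ and swapping the summands turns the field above into $\begin{pmatrix}-sx&y^k\\0&sx\end{pmatrix}$ on $\mc{O}_X\oplus(L_a\otimes L_b^{-k})$. This is type (1b) with $s\mapsto -s$, at the boundary value $k=\log|a|/\log|b|$, which (1b) does include. Likewise, the $|a|=|b|$, $k=1$ case becomes (2b) with $k=1$.

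The paper makes exactly this ``can be renormalized'' remark in the hyper-resonant proof. With it in place of the genericity claim, your argument goes through.
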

    \begin{proof}
        First suppose that $(\mc{O}_X\oplus H, \phi)$ is a nowhere-vanishing $L_a$-twisted Higgs bundle. Then either $H\simeq L_a^{-1}$ and the $\mc{O}_X$-component of $\phi$ is non-zero, or one of $L_a\otimes H$ and $L_a\otimes H^{-1}$ has the form $L_b^k$ for some $k>0$. Since we assume $\deg(H)\leq 0$, the possible choices for $H$ are then $H\simeq L_a^{-1}$, $H\simeq L_a\otimes L_b^{-k}$ for some $k\geq \frac{\log{|a|}}{\log{|b|}}$, or $H\simeq L_b^k\otimes L_a^{-1}$ for some $0<k<\frac{\log{|a|}}{\log{|b|}}.$

        If $H\simeq L_a^{-1}$, then $\sEnd_0(E)\otimes L_a\simeq \mc{O}_X\oplus L_a\oplus L_a^2$, so $$\phi=\begin{pmatrix}
            sx & tx^2\\u & -sx
        \end{pmatrix}$$ for some $a,b\in \C$ and $c \in \C^*$. Conjugating by the automorphism $$\begin{pmatrix}
            u & -tx\\0 & 1
        \end{pmatrix}$$ gives $\phi$ of type (1a).

        If $H \simeq L_a\otimes L_b^{-k}$, then $\sEnd_0(E)\otimes L_a\simeq L_a^2\otimes L_b^{-k}\oplus L_a\oplus L_b^k$, so $$\phi=\begin{pmatrix}
            sx & ty^k\\0 & -sx
        \end{pmatrix}$$ for some $s,t\in \C^*$. Conjugating by the automorphism $$\begin{pmatrix}
            1 & 0\\0 & t
        \end{pmatrix}$$ gives $\phi$ of type (1b).

        If $H\simeq L_b^k\otimes L_a^{-1}$, then $\sEnd_0(E)\otimes L_a\simeq L_b^k\oplus L_a\oplus L_a^2\otimes L_b^{-k}$, so $$\phi=\begin{pmatrix}
            sx &0\\ uy^k & -sx
        \end{pmatrix}$$ for some $s,u\in \C^*$. Conjugating by the automorphism $$\begin{pmatrix}
            u & 0\\0 & 1
        \end{pmatrix}$$ gives $\phi$ of type (1c).

        The proof for $L_b$-twisted Higgs bundles is the same.

        If $(\mc{O}_X\oplus H, \phi)$ is a nowhere-vanishing $L_a\otimes L_b^{-\ell}$-twisted Higgs bundle for some integer $\ell$ with $0<\ell<\frac{\log{|a|}}{\log{|b|}},$ then $H\simeq L_a^j\otimes L_b^k$ for some integers $j,k$. Then $\sEnd_0(E)$ will have at most one component with a non-zero section, so we must have $H\simeq L_a^{-1}\otimes L_b^\ell$ and $$\phi=\begin{pmatrix}
            0 & 0\\u & 0
        \end{pmatrix}$$ for some $u \in \C^*$. Conjugating with the automorphism $$\begin{pmatrix}
            u & 0\\0 & 1
        \end{pmatrix}$$ gives $\phi$ of type (3).
    \end{proof}

     \subsection{The Exceptional Case}\label{exceptional-Higgs}
     When $X$ is an exceptional Hopf surface, the indecomposable extensions of line bundles admit a wider variety of Higgs fields than in the diagonal case. In order to compute the Higgs fields for these bundles, we note that every indecomposable bundle on the Hopf surface given by contraction $(x,y)\mapsto (b^rx+y^r, b y)$ can be written as a non-split extension in two distinct ways. Suppose that $E$ is the unique non-split extension of $(L_b)^{-\ell-rk}$ by $\mc{O}_X$ for integers $k,\ell$ with $k\geq 0$, $0\leq \ell<r$. Then $E$ also appears as a non-split extension of $(L_b)^{r-\ell}$ by $(L_b)^{-r(k+1)}$. Writing the two corresponding exact sequences $$\begin{tikzcd}
             0 \arrow[r] & \mc{O}_X\arrow{r}{\iota_1} & E \arrow{r}{\rho_1} & (L_b)^{-rk-\ell}\arrow[r] & 0,\\
             0 \arrow[r] & (L_b)^{-r(k+1)}\arrow{r}{\iota_2} & E \arrow{r}{\rho_2} & (L_b)^{r-\ell} \arrow[r] & 0,
         \end{tikzcd}$$
         
         We can construct some nowhere-vanishing Higgs fields \begin{align*}
             \phi_{11}&:=\left(\iota_1\otimes \id_{(L_b)^{r-\ell}}\right)\circ \rho_2,\\
             \phi_{12}&:=\left(\iota_1\otimes \id_{(L_b)^{-rk-\ell}}\right)\circ \rho_1,\\
             \phi_{21}&:=\left(\iota_2\otimes \id_{(L_b)^{r(k+2)-\ell}}\right)\circ \rho_2,\\
             \phi_{22}&:=\left(\iota_2\otimes \id_{(L_b)^{r-\ell}}\right)\circ \rho_1,
         \end{align*}
         where $\phi_{11}$ and $\phi_{22}$ are twisted by $(L_b)^{r-\ell}$, $\phi_{12}$ is twisted by $(L_b)^{-rk-\ell}$, and $\phi_{21}$ is twisted by $(L_b)^{r(k+2)-\ell}$. Clearly $\phi_{12}$ and $\phi_{21}$ are both nilpotent since $\rho_j\circ\iota_j=0$. By the fact that $\iota_1$ and $\iota_2$ have non-isomorphic domains and that they both have torsion-free quotient, we can conclude that $\rho_i\circ \iota_j\neq 0$ when $i\neq j$. Thus both $\phi_{11}$ and $\phi_{22}$ have rank-1 kernel and non-zero trace, and $\phi_{11}\circ\phi_{22}=\phi_{22}\circ\phi_{11}=0$. It also implies that $(\phi_{12}\otimes \id_{(L_b)^{r(k+2)-\ell}}\circ\phi_{21}\neq 0$ and $(\phi_{21}\otimes \id_{(L_b)^{-rk-\ell}})\circ \phi_{12}\neq 0$. Without loss of generality, we can assume that $\rho_1$ and $\rho_2$ were chosen so that $\tr(\phi_{11})=\tr(\phi_{22})=y^{r-\ell}$. Then $\phi_{11}-\phi_{22}$ gives a trace-free $(L_b)^{r-\ell}$-twisted Higgs field on $E$. We can conclude that for any $\lambda \in \Div$, there is a local frame for $\sEnd_0(E)\otimes \lambda$ given by $\{s_{0}(\phi_{11}-\phi_{22}), s_{12}(\phi_{12}),s_{21}(\phi_{21})\}$, where $s_{0}$ is a meromorphic section of $\lambda\otimes (L_b)^{\ell-r}$, $s_{12}$ is a meromorphic section of $\lambda\otimes (L_b)^{rk+\ell}$, and $s_{21}$ is a meromorphic section of $\lambda\otimes (L_b)^{\ell-r(k+2)}$. 

         \begin{lemma}
             If $X$ is an exceptional Hopf surface and $E$ is the unique non-split extension of $(L_b)^{-rk-\ell}$ by $\mc{O}_X$ for non-negative integers $k,\ell$ with $\ell<r$, the meromorphic sections $\phi_{11}$, $\phi_{22}$, $\phi_{12}$, and $\phi_{21}$ of $\sEnd(E)$ satisfy the relations \begin{align*}
                 \phi_{11}+\phi_{22}&=y^{r-\ell}\id_E,\\
                 \phi_{11}\circ \phi_{22}=\phi_{22}\circ \phi_{11}=\phi_{11}\circ\phi_{21}&=\phi_{21}\circ\phi_{22}=\phi_{12}\circ\phi_{11}=\phi_{22}\circ\phi_{12}=0,\\
                 \phi_{11}\circ \phi_{12}&=\phi_{12}\circ \phi_{22}=y^{r-\ell}\phi_{12},\\
                 \phi_{22}\circ\phi_{21}&=\phi_{21}\circ\phi_{11}=y^{r-\ell}\phi_{21}.
             \end{align*}
         \end{lemma}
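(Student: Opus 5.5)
The plan is to derive every relation formally from the two short exact sequences and the definitions $\phi_{ij}=(\iota_i\otimes\id)\circ\rho_j$. Everything reduces to composites $\rho_j\circ\iota_i$, which are maps between line bundles, so no coordinate computation on $W$ is needed beyond one normalization. Write
$$c_{ij}:=\rho_j\circ\iota_i,$$
a section of $\Hom(\text{source of }\iota_i,\text{target of }\rho_j)$. Exactness gives $c_{11}=c_{22}=0$. The off-diagonal terms are:
\begin{itemize}
\item $c_{12}=\rho_2\circ\iota_1$, a section of $(L_b)^{r-\ell}$;
\item $c_{21}=\rho_1\circ\iota_2$, a section of $(L_b)^{-rk-\ell+r(k+1)}=(L_b)^{r-\ell}$.
\end{itemize}
Both are nonzero, as noted just before the lemma.

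For any $i,j,m,n$ the associativity identity
$$\phi_{ij}\circ\phi_{mn}=\iota_i\circ(\rho_j\circ\iota_m)\circ\rho_n=c_{mj}\,\iota_i\circ\rho_n=c_{mj}\,\phi_{in}$$
holds, with the twists tensored appropriately. Feeding in $c_{11}=c_{22}=0$ immediately gives the six vanishing relations:
\begin{itemize}
\item $\phi_{11}\phi_{22}$ involves $c_{21}$ paired with the wrong indices; more precisely, $\phi_{11}\circ\phi_{22}=\iota_1\rho_2\iota_2\rho_1$ contains $\rho_2\iota_2=0$.
\item The relations $\phi_{22}\phi_{11}$, $\phi_{11}\phi_{21}$, $\phi_{21}\phi_{22}$, $\phi_{12}\phi_{11}$ and $\phi_{22}\phi_{12}$ each contain either $\rho_1\iota_1$ or $\rho_2\iota_2$, so they vanish the same way.
\end{itemize}
The same identity gives the nonvanishing composites:
$$\phi_{11}\circ\phi_{12}=\iota_1\rho_2\iota_1\rho_1=c_{12}\phi_{12},\qquad \phi_{12}\circ\phi_{22}=\iota_1\rho_1\iota_2\rho_2\cdot{}$$
wait, more carefully, $\phi_{12}=\iota_1\rho_1$ and $\phi_{22}=\iota_2\rho_1$, so $\phi_{12}\phi_{22}=\iota_1(\rho_1\iota_2)\rho_1=c_{21}\phi_{12}$. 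Similarly $\phi_{22}\phi_{21}=c_{21}\phi_{21}$ and $\phi_{21}\phi_{11}=c_{12}\phi_{21}$.

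So the remaining relations all follow once we show $c_{12}=c_{21}=y^{r-\ell}$, with appropriate identifications, and $\phi_{11}+\phi_{22}=y^{r-\ell}\id_E$.

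For the normalization, use $\tr(\iota\circ\rho)=\rho\circ\iota$ for rank-one maps. This gives
$$\tr\phi_{11}=c_{12},\qquad \tr\phi_{22}=c_{21}.$$
By the chosen normalization $\tr\phi_{11}=\tr\phi_{22}=y^{r-\ell}$, so $c_{12}=c_{21}=y^{r-\ell}$. (Since $H^0((L_b)^{r-\ell})$ is spanned by $y^{r-\ell}$ when $0<r-\ell\le r$, scaling $\rho_1,\rho_2$ achieves this.)

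For the sum, consider $\psi=\phi_{11}+\phi_{22}-y^{r-\ell}\id_E$ and test it on the two sub-line bundles:
$$\psi\circ\iota_1=\iota_1c_{12}+\iota_2\rho_1\iota_1-y^{r-\ell}\iota_1=0,$$
$$\psi\circ\iota_2=\iota_1\rho_2\iota_2+\iota_2c_{21}-y^{r-\ell}\iota_2=0.$$
The images of $\iota_1$ and $\iota_2$ are distinct sub-line bundles, so they generate $E$ away from the zero locus of $\rho_2\circ\iota_1=y^{r-\ell}$, a divisor. Hence $\psi$ vanishes on a dense open set, and therefore $\psi=0$.

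The main point needing care is bookkeeping of the twisting line bundles, which I expect to be the only obstacle. The identifications of $(L_b)^{r-\ell}$-valued scalars across the different tensor factors must be made consistently, so that "$y^{r-\ell}$" means the same section in each relation. I would fix this by working on $W$ with the lifted factors of automorphy, where every $c_{ij}$ is literally the function $y^{r-\ell}$.
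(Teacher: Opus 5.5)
Your argument is correct, but it is organized differently from the paper's. The paper gets the six vanishing relations the same way you do, from $\rho_j\circ\iota_j=0$. It then proves the sum identity first. Since $\phi_{11}$ and $\phi_{22}$ have zero determinant and trace $y^{r-\ell}$, Cayley--Hamilton gives $\phi_{jj}^2=y^{r-\ell}\phi_{jj}$. Hence $A=\phi_{11}+\phi_{22}$ satisfies $A^2=y^{r-\ell}A$ with $\tr A=2y^{r-\ell}$, which forces $A=y^{r-\ell}\id_E$. The four product relations then follow from the sum identity and the vanishing relations, e.g.\ $\phi_{11}\circ\phi_{12}=(\phi_{11}+\phi_{22})\circ\phi_{12}=y^{r-\ell}\phi_{12}$.

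You instead reduce every composite to the scalars $c_{ij}=\rho_j\circ\iota_i$ and get the product relations directly by associativity. You pin down $c_{12}=c_{21}=y^{r-\ell}$ via $\tr(\iota\circ\rho)=\rho\circ\iota$, and prove the sum identity by testing on $\iota_1,\iota_2$, which span $E$ off $V(y)$.

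What your route buys:
\begin{itemize}
\item each relation is a one-line computation;
\item the coefficients are identified concretely as $\rho_2\circ\iota_1$ and $\rho_1\circ\iota_2$;
\item the normalization is justified via $h^0(X,(L_b)^{r-\ell})=1$, where the paper simply assumes it.
\end{itemize}
The paper's route is more economical: the products follow formally from the sum identity, and it uses only intrinsic invariants (trace, determinant) of $\phi_{11},\phi_{22}$. Its last step, that $A^2=y^{r-\ell}A$ and $\tr A=2y^{r-\ell}$ force $A=y^{r-\ell}\id_E$, tacitly needs a short generic-point argument (dividing by $y^{r-\ell}$ off $V(y)$), analogous to your density step.

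One bookkeeping slip to fix. With the paper's definitions, $\phi_{11}=\iota_1\circ\rho_2$, $\phi_{12}=\iota_1\circ\rho_1$, $\phi_{21}=\iota_2\circ\rho_2$ and $\phi_{22}=\iota_2\circ\rho_1$ (twists suppressed). That is, $\phi_{ij}$ uses $\rho_{3-j}$, not $\rho_j$ as in your opening sentence. Your general identity should therefore read $\phi_{ij}\circ\phi_{mn}=c_{m,3-j}\,\phi_{in}$ rather than $c_{mj}\,\phi_{in}$. Your case-by-case verifications already use the correct factorizations, so no conclusion is affected. Still, clean up the opening definition, the general identity, and the mid-computation self-correction.
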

         \begin{proof}
             The fact that $\phi_{11}\circ \phi_{22}=\phi_{22}\circ \phi_{11}=\phi_{11}\circ\phi_{21}=\phi_{21}\circ\phi_{22}=\phi_{12}\circ\phi_{11}=\phi_{22}\circ\phi_{12}=0$ follows immediately from the definitions of the Higgs fields and the fact that $\rho_j\circ\iota_j=0$ for $j=1,2$. The fact that $\phi_{11}$ and $\phi_{22}$ have trace $y^{r-\ell}$ and zero determinant implies that $\phi_{11}^2=y^{r-\ell}\phi_{11}$ and $\phi_{22}^2=y^{r-\ell}\phi_{22}$, so $(\phi_{11}+\phi_{22})^2=\phi_{11}^2+\phi_{22}^2=y^{r-\ell}(\phi_{11}+\phi_{22})$. The fact that $\tr(\phi_{11}+\phi_{22})=2y^{r-\ell}$ then forces $\phi_{11}+\phi_{22}=y^{r-\ell}\id_E$.

             Using the fact that $\phi_{22}\circ\phi_{12}=0$ and $\phi_{11}+\phi_{22}=y^{r-\ell}\id_E$ we get $\phi_{11}\circ\phi_{12}=\phi_{11}\circ\phi_{12}+\phi_{22}\circ\phi_{12}=y^{r-\ell}\phi_{12}$. The computations for $\phi_{12}\circ \phi_{22}$, $\phi_{22}\circ\phi_{21}$, and $\phi_{21}\circ\phi_{11}$ work similarly.
         \end{proof}

     We now return to $\lambda$-twisted Higgs fields where $H^0(X,\mc{T}_X\otimes \lambda^{-1})\neq 0$. This restriction forces $\lambda\simeq (L_b)^\ell$ for some $\ell\leq r$.

     \begin{proposition}
         Let $X$ be the exceptional Hopf surface given by the contraction $(x,y)\mapsto (b^rx+y^r, by)$. Set $\lambda\simeq (L_b)^\ell$ for some $\ell \in \Z$ with $\ell\leq r$.
         \begin{enumerate}
             \item When $1\leq \ell\leq r$, the $\lambda$-twisted Higgs bundles $(E,\phi)$, up to conjugation with an automorphism of $E$, are one of:
             \begin{enumerate}
                 \item $E\simeq \mc{O}_X\oplus \lambda^{-1}$ with $$\phi=\begin{pmatrix}
                 0 & by^{2\ell}\\1 & 0
                \end{pmatrix}$$
                for some $b \in \C$. The $\phi$-invariant sub-line bundles are $\lambda^{-1}$ under the inclusion $(\pm y^\ell \sqrt{b}, 1).$
                \item $E$ is the unique non-split extension of $(L_b)^{\ell-rk}$ by $\mc{O}_X$ for some integer $k\geq 1$, and $$\phi=\alpha(\phi_{11}-\phi_{22})$$ for some $\alpha \in \C^*$. The $\phi$-invariant sub-line bundles are given by the inclusions $\iota_1:\mc{O}_X\hookrightarrow E$ and $\iota_2: (L_b)^{-r(k+1)} \hookrightarrow E$.
             \end{enumerate}
             \item When $\ell=0$, the $\lambda$-twisted Higgs bundles $(E,\phi)$, up to conjugation with an automorphism of $E$, are one of:
             \begin{enumerate}
                 \item $E\simeq \mc{O}_X\oplus \mc{O}_X$ and $$\phi=\begin{pmatrix}
             0 & 1\\0 & 0
         \end{pmatrix}.$$ The second direct summand is the unique $\phi$-invariant sub-line bundle.
         \item $E \simeq \mc{O}_X\oplus H$ for some $H\in \Pic(X)$ with $$\phi=\begin{pmatrix}
             a & 0\\0 & -a
         \end{pmatrix}$$ for some $a \in \C^*$. Both direct summands are $\phi$-invariant.
         \item $E$ is the unique non-split extension of $\mc{O}_X$ by itself, and $\phi=\beta\phi_{12}$ for some $\beta \in \C^*$. The unique $\phi$-invariant sub-line bundle is given by $\iota_1:\mc{O}_X\hookrightarrow E$.
             \end{enumerate}
             \item For $\ell<0$, the $\lambda$-twisted Higgs bundles $(E,\phi)$, up to conjugation with an automorphism of $E$, are one of:
             \begin{enumerate}
                 \item $E\simeq \mc{O}_X\oplus \lambda$ and $$\phi=\begin{pmatrix}
             0 & 1\\0 & 0
         \end{pmatrix}.$$
         Here the second direct summand is the only $\phi$-invariant sub-line bundle.
         \item $E$ is the unique non-split extension of $\lambda$ by $\mc{O}_X$, and $\phi=\beta\phi_{12}$ for some $\beta\in \C^*$. The unique $\phi$-invariant sub-line bundle is given by $\iota_1:\mc{O}_X\hookrightarrow E$.
             \end{enumerate}
         \end{enumerate}
     \end{proposition}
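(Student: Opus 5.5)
We split according to the type of the normalized underlying bundle $E$, using the trichotomy (1)--(3) recalled earlier in this section: split bundles $\mc{O}_X\oplus H$ with $H\in\Div$, split bundles with $H\notin\Div$, and non-split extensions of $H\in\Div$ by $\mc{O}_X$. On an exceptional Hopf surface $\Div$ is generated by $L_b$, with $h^0(X,(L_b)^m)=1$ for $m\ge 0$ (spanned by $y^m$) and $0$ otherwise, so all component computations reduce to bookkeeping of exponents of $L_b$.

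\textbf{Split cases.} Type (2) is handled by the general remark already made: only $\lambda\simeq\mc{O}_X$ occurs, and it gives (2b). For type (1), $\sEnd_0(E)\otimes\lambda\simeq\lambda\oplus\lambda\otimes H\oplus\lambda\otimes H^{-1}$, and the earlier remark says that in the exceptional case one of these summands must be trivial. If $\ell\ge1$, the non-vanishing of $\phi$ on every divisor forces the trivial summand to be $\lambda\otimes H$. Otherwise all nonzero components are multiples of powers of $y$ and vanish on $\{y=0\}$. Hence $H\simeq\lambda^{-1}$ and
\[
\phi=\begin{pmatrix}\alpha&\beta\\ c&-\alpha\end{pmatrix},
\]
with $c\in\C^*$, $\alpha=\alpha_0y^\ell$, $\beta\in\C y^{2\ell}$. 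Conjugating by $\begin{pmatrix}c&-\alpha\\0&1\end{pmatrix}$, exactly as in the diagonal cases, gives (1a), and the eigenvector computation gives the invariant subbundles. For $\ell\le0$ we instead quote the already-stated classification for $\deg\lambda\le0$, which gives (2a), (2b) and (3a).

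\textbf{Non-split case.} Here $E$ is the extension of $(L_b)^{-rk-\ell'}$ by $\mc{O}_X$ from the two-sequence description, and we use the local frame $\{s_0(\phi_{11}-\phi_{22}),s_{12}\phi_{12},s_{21}\phi_{21}\}$ of $\sEnd_0(E)\otimes\lambda$. The first step is to show that a holomorphic section of $\sEnd_0(E)\otimes\lambda$ has holomorphic coefficients $s_0,s_{12},s_{21}$. To do this, compose with $\iota_j$ and $\rho_j$ and use the relations of the preceding Lemma to extract each coefficient as a holomorphic section, for instance $\rho_1\circ\phi\circ\iota_2$, up to the nonzero maps $\rho_i\circ\iota_j$. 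I expect this to be the main obstacle: the products $\rho_i\circ\iota_j$ vanish along $\{y=0\}$, so division may introduce poles. If the direct extraction fails, the fallback is to compute $h^0(X,\sEnd_0(E)\otimes\lambda)$ from the extension sequences and the table, and compare the result with the dimension spanned by the candidate fields.

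Granting holomorphic coefficients, the coefficients are sections of $\lambda\otimes(L_b)^{\ell'-r}$, $\lambda\otimes(L_b)^{rk+\ell'}$ and $\lambda\otimes(L_b)^{\ell'-r(k+2)}$, so each is a constant times a power of $y$ or is zero. Requiring that $\phi$ not vanish along $\{y=0\}$ forces exactly one exponent to be $0$, with the other terms either absent or divisible by $y$.
\begin{itemize}
\item For $1\le\ell\le r$, this forces $\lambda=(L_b)^{r-\ell'}$ and $\phi=\alpha(\phi_{11}-\phi_{22})$, after renaming indices. The $\phi_{12}$ term is then removed by conjugating with a unipotent automorphism $\id+t\phi_{12}$, using the relations of the preceding Lemma; this gives (1b).
\item For $\ell\le0$, only the $\phi_{12}$ term survives and $\lambda=(L_b)^{-rk-\ell'}$, which gives (2c) and (3b).
\end{itemize}
In each case, the invariant sub-line bundles are read off from the kernels and images of $\phi_{11}$, $\phi_{22}$ and $\phi_{12}$, which are exactly $\iota_1$ and $\iota_2$.
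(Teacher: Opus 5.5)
Your outline is the paper's proof almost step for step. The split cases go through the three summands of $\sEnd_0(E)\otimes\lambda$; the non-split case uses the frame $\phi_{11}-\phi_{22},\phi_{12},\phi_{21}$ and the fact that $C=\{y=0\}$ is the only curve, which forces exactly one exponent to vanish; and the $\phi_{12}$-term is removed by the unipotent conjugation $\id_E+t\phi_{12}$. (In the split case with $\ell\geq 1$, add that $\lambda\otimes H^{-1}\simeq\mc{O}_X$ is excluded by the normalization $\deg H\leq 0$.)

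The one step you leave open is the holomorphy of $s_0,s_{12},s_{21}$. The paper simply asserts it, having just before claimed only meromorphic coefficients, so as written your argument is conditional at the same place. You are right that it needs an argument. Since $\rho_1\circ\iota_2$ is a multiple of $y^{r-\ell'}$, the maps $\iota_1$ and $\iota_2$ have the same image along $C$. All three frame elements therefore restrict there to multiples of a single nilpotent endomorphism. The sandwich identity $\rho_1\circ\phi\circ\iota_2=-s_0(\rho_1\circ\iota_2)^2$ then only bounds the pole of $s_0$ by $2(r-\ell')$.

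To close the gap, use the three sandwich identities
$\rho_1\circ\phi\circ\iota_2=-s_0(\rho_1\circ\iota_2)^2$,
$\rho_2\circ\phi\circ\iota_2=s_{12}(\rho_2\circ\iota_1)(\rho_1\circ\iota_2)$, and
$\rho_1\circ\phi\circ\iota_1=s_{21}(\rho_1\circ\iota_2)(\rho_2\circ\iota_1)$.
They show that the coefficients are meromorphic sections of $(L_b)^{j_\bullet}$ with poles only along $C$, where $j_0=\ell+\ell'-r$, $j_{12}=j_0+r(k+1)$ and $j_{21}=j_0-r(k+1)$.

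Because $X$ has algebraic dimension $0$, each quotient $s_\bullet/y^{j_\bullet}$ is constant. Hence $s_\bullet=c_\bullet y^{j_\bullet}$, and the three exponents are pairwise distinct.

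Next, each frame element is nowhere zero on $C$. For $\phi_{12}=\iota_1\circ\rho_1$ and $\phi_{21}=\iota_2\circ\rho_2$, this holds because each is a fibrewise-injective map composed with a surjection. For $\phi_{11}-\phi_{22}$, note that $\phi_{11}+\phi_{22}=y^{r-\ell'}\id_E$ with $r-\ell'\geq 1$, so on $C$ it equals $2\iota_1\circ\rho_2$, which is nowhere zero for the same reason.

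Suppose some $c_\bullet\neq 0$ had $j_\bullet<0$. Let $j_*$ be the least exponent with non-zero coefficient; it is attained by a single term $c_*\psi_*$. Then $y^{-j_*}\phi=c_*\psi_*+(\text{terms divisible by } y)$ is holomorphic and vanishes on $C$. This forces $c_*\psi_*|_C=0$, a contradiction. So all coefficients are holomorphic.

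With this in place, the rest of your argument goes through exactly as in the paper. This is much lighter than your fallback of computing $h^0(X,\sEnd_0(E)\otimes\lambda)$, which would require controlling the connecting homomorphisms of the filtration of $\sEnd_0(E)$.
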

     \begin{proof}
         We begin with the decomposable bundles $E\simeq \mc{O}_X\oplus H$ for $H \in \Pic(X)$ with $\deg(H)\leq 0$. Since $X$ has a unique divisor, the only way to have $\phi\in H^0(X,\sEnd_0(E)\otimes \lambda)=H^0(X,\lambda)\oplus H^0(X,\lambda\oplus H)\oplus H^0(X,\lambda\oplus H^{-1})$ not vanish on the divisor is if $H\simeq \lambda$, $H\simeq \lambda^{-1}$, or $\lambda\simeq \mc{O}_X$. Using analogous computations to the generic case, the case $H\simeq \lambda^{-1}$ gives type (1a), the case $H\simeq \lambda$ gives type $(2a)$ or $(3a)$, and the case $\lambda\simeq \mc{O}_X$ gives type $(2a)$ or $(2b)$.

         Now consider the case where $E$ is the indecomposable extension of $(L_b)^{-rk-m}$ by $\mc{O}_X$ for integers $k,m$ with $k\geq 0$ and $0\leq m<r$. Suppose that $\phi$ is a trace-free $\lambda$-twisted Higgs field on $E$ for $\lambda\simeq (L_b)^\ell$ which does not vanish on any divisor. Then there are sections $s_{0}\in H^0(X,\lambda\otimes (L_b)^{m-r}), s_{12}\in H^0(X,\lambda\otimes (L_b)^{rk+m}), s_{21}\in H^0(X,\lambda\otimes (L_b)^{m-r(k+2)})$ so that $\phi=s_{0}\phi_{11}+s_{12}\phi_{12}+s_{21}\phi_{21}$, and such that $V(s_{0})\cap V(s_{12})\cap V(s_{21})=\emptyset$. However, since $X$ has a unique divisor, $V(s_{0})\cap V(s_{12})\cap V(s_{21})=\emptyset$ can only occur if $\lambda\in \{(L_b)^{r-m}, (L_b)^{-rk-m}, (L_b)^{r(k+2)-m}\}$. The restriction that $\ell\leq r$ rules out the third case, so we only need to consider when $\ell=r-m$ or $\ell=-rk-m$. In the case $\ell=r-m$, we get $\phi=\alpha (\phi_{11}-\phi_{22})+\beta y^{r(k+1)}\phi_{12}$ for $\alpha \in \C^*$ and $\beta \in \C$. Conjugating with the automorphism $\id_E+\frac{\beta y^{rk+m}}{2\alpha}\phi_{12}$ gives a Higgs field of type $(1b)$. When $\ell=-rk-m$, we get $\phi=\beta \phi_{12}$ for some $\beta \in \C^*$. This gives a Higgs field of type $(2c)$ or $(3b)$ depending on whether $rk+m=0$.
     \end{proof}

\section{Co-Higgs bundles with $c_2>0$}\label{co-Higgs-non-flat}

From Proposition \ref{non-filtrable} we know that every rank-2 co-Higgs bundle on a Hopf surface will be filtrable, so the underlying bundle will always admit a Serre-type exact sequence. Our strategy for studying co-Higgs bundles with positive second Chern class will vary based on the structure of this exact sequence.

\begin{proposition}\label{not-in-fibre}
    Let $E$ be a rank-2 filtrable vector bundle on a Hopf surface $X$, and let \begin{equation}\label{Serre}\begin{tikzcd}
        0 \arrow[r] & L_1 \arrow{r}{\iota} & E \arrow{r}{p} & L_2\otimes I_Z \arrow[r] & 0
    \end{tikzcd}\end{equation}
    be a Serre exact sequence for $E$ with $L_1,L_2 \in \Pic(X)$ and $Z$ a zero-dimensional subspace. If the support of $Z$ is not contained in a divisor of $X$, then for any $H \in \Div$, $H^0(X,\sEnd_0E\otimes H)\simeq H^0(X,L_1\otimes L_2^{-1} \otimes H)$. In particular, any section $\phi \in H^0(X,\sEnd_0E\otimes H)$ is nilpotent.
\end{proposition}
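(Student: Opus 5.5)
Tensoring the Serre sequence \eqref{Serre} with suitable sheaves reduces the statement to the vanishing of sections of ideal-sheaf twists, where the hypothesis on $Z$ is used. The key observation is that a section of a line bundle in $\Div$ vanishes on a divisor of $X$. Hence if $M\in\Div$ and $Z$ is not contained in any divisor, then $H^0(X,M\otimes I_Z)=0$: a nonzero section of $M$ vanishing on $Z$ would have zero locus a divisor containing $\operatorname{supp}Z$. For $M\notin\Div$ we already have $H^0(X,M)=0$.

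\emph{Step 1 (sections of the trace-free part).} Dualize the Serre sequence. Since $E^\vee\simeq E\otimes\det(E)^{-1}=E\otimes L_1^{-1}\otimes L_2^{-1}$, we have $\sEnd_0E\subset \sEnd E=E\otimes E^\vee$. Consider the filtration of $\sEnd E$ by the subsheaf of endomorphisms sending $L_1$ into $L_1$. More concretely, tensor \eqref{Serre} by $E^\vee\otimes H$ to get
$$0\to E^\vee\otimes L_1\otimes H\to \sEnd E\otimes H\to E^\vee\otimes L_2\otimes I_Z\otimes H,$$
which is left exact. Composition with $\iota$ and $p$ shows that a section $\phi$ of $\sEnd E\otimes H$ induces $p\circ\phi\circ\iota\in H^0(X,L_1^{-1}\otimes L_2\otimes H\otimes I_Z)$.

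\emph{Step 2 (invariance of $L_1$).} Apply the observation to $M=L_1^{-1}\otimes L_2\otimes H$. Here $H\in\Div$, and $M$ either lies in $\Div$ or has no sections, so $p\circ\phi\circ\iota=0$. Thus $L_1$ is $\phi$-invariant, and $\phi$ induces an endomorphism $\phi|_{L_1}\in H^0(X,H)$ of $L_1$. It also induces an endomorphism of $L_2\otimes I_Z$, which extends to a section of $H$ because $Z$ has codimension 2. Trace-freeness gives that these two are $s$ and $-s$ for some $s\in H^0(X,H)$.

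\emph{Step 3 (the diagonal part vanishes).} This is the main obstacle. My plan is to show $s=0$ by restricting to points of $Z$. Away from $Z$, $\phi$ is upper triangular with diagonal $(s,-s)$, so $\det\phi=-s^2$ everywhere. At a point $z\in Z$, the bundle $E$ near $z$ has the local form coming from the Serre construction, with $\iota$ vanishing at $z$. Then $\phi$ preserves $\iota(L_1)$, which vanishes at $z$, together with its saturation. I expect a local computation at $z$ in coordinates where $\iota=(f,g)$ with $V(f,g)=\{z\}$: the condition $\phi\iota=s\iota$ forces $\phi(z)=s(z)\,\mathrm{Id}$. Trace-freeness then gives $s(z)=0$, so $s$ vanishes on $\operatorname{supp}Z$. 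Since $Z$ is not in a divisor and $H\in\Div$, we get $s=0$. The delicate part of this step is the claim that $\phi(z)$ is scalar. This should follow because $\iota$ spans a whole neighborhood's worth of directions near $z$ (the fibers of $L_1$ sweep all directions as $(f,g)\to 0$), so $\phi$ has $(f,g)$ as an eigenvector for generic nearby points, and continuity gives the result.

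\emph{Step 4 (conclusion).} Now $\phi$ kills $L_1$ and maps into $\iota(L_1)\otimes H$, so $\phi$ factors as $E\xrightarrow{p}L_2\otimes I_Z\to L_1\otimes H$. Since $\sHom(L_2\otimes I_Z,L_1\otimes H)\simeq L_1\otimes L_2^{-1}\otimes H$ by reflexivity, we obtain
$$H^0(X,\sEnd_0E\otimes H)\simeq H^0(X,L_1\otimes L_2^{-1}\otimes H).$$
Conversely, every section of the right-hand side gives such a $\phi$, namely $\iota\circ t\circ p$. These endomorphisms are visibly nilpotent, since $p\circ\iota=0$.
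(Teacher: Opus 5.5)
Your proof is correct, and it takes a genuinely different route from the paper's.

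\textbf{The paper's route.} The paper argues entirely by dimension counts in cohomology. It tensors the Serre sequence by $H\otimes E^\vee$ and uses $H^0(X,L_1^{-1}\otimes L_2\otimes H\otimes I_Z)=0$ to get $H^0(X,E\otimes L_1^{-1}\otimes H)\simeq H^0(X,H)$. This means $p\circ\phi=s\,p$ for some $s\in H^0(X,H)$, so
\[
H^0(X,\sEnd E\otimes H)=H^0(X,H)\,\id_E\oplus\{\phi:p\circ\phi=0\}.
\]
It then uses $H^0(X,H\otimes I_Z)=0$ to identify
\[
\{\phi:p\circ\phi=0\}\simeq H^0(X,E\otimes L_2^{-1}\otimes H)\simeq H^0(X,L_1\otimes L_2^{-1}\otimes H).
\]

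\textbf{Your route.} You use the same two vanishings, but in the opposite order and directly on the Higgs field. The first gives $\phi$-invariance of $L_1$ through $p\circ\phi\circ\iota=0$, whereas in the paper it gives scalar action on the quotient. The second enters as ``$s$ vanishes on $\mathrm{supp}\,Z$, hence $s=0$.''

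\textbf{What each buys.} Your version gives a transparent reason why the diagonal part dies: at points of $Z$ the Higgs field is forced to be scalar, hence zero by trace-freeness. The cost is a local computation that the paper's global count avoids.

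\textbf{The step you flag as delicate.} It is fine and closes in one line. Since $V(f,g)=\{z\}$, $f$ and $g$ are coprime in the UFD $\mc{O}_{X,z}$. So $(\phi-s\,\id_E)\begin{pmatrix} f\\ g\end{pmatrix}=0$ forces
\[
\phi-s\,\id_E=\begin{pmatrix} ga' & -fa'\\ gc' & -fc'\end{pmatrix},
\]
whence $-2s=\tr(\phi-s\,\id_E)=ga'-fc'\in(f,g)$. This actually shows $s\in H^0(X,H\otimes I_Z)$, which is exactly the paper's second vanishing input.

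Your continuity argument also works. For each direction $[u:v]$, the curve $\{ug=vf\}$ passes through $z$, so every direction is realized by $(f,g)$ at points arbitrarily close to $z$.
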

\begin{proof}
    First we take the tensor product of the Serre exact sequence with $H\otimes E^\vee$ to get
    $$\begin{tikzcd}
        0 \arrow[r] & L_1\otimes H\otimes E^\vee \arrow[r] & \sEnd E\otimes H\arrow[r] & L_2\otimes H\otimes E^\vee \otimes I_Z \arrow[r] & 0.
    \end{tikzcd}$$
    If we rewrite this using the fact that $E^\vee\simeq E\otimes \det(E)^{-1}\simeq E\otimes L_1^{-1}\otimes L_2^{-1}$ and take cohomology, we get the left exact sequence 
    $$\begin{tikzcd}
        0 \arrow[r] & H^0(X,L_2^{-1}\otimes H\otimes E) \arrow{r}{\iota_*} & H^0(X,\sEnd E\otimes H) \arrow{r}{p_*} & H^0(X,L_1^{-1}\otimes H\otimes E\otimes I_Z).
    \end{tikzcd}$$

    Note that the restriction of $p_*$ to the identity component $\{s\id:s \in H^0(X,H)\}\subseteq H^0(X,\sEnd E\otimes H)$ is necessarily injective by surjectivity of $p$. In particular, this implies that $h^0(X,L_1^{-1}\otimes H\otimes E\otimes I_Z)\geq h^0(X,H).$

    As an ideal sheaf, $I_Z$ fits naturally into the exact sequence $$\begin{tikzcd}
        0 \arrow[r] & I_Z \arrow[r] & \mc{O}_X \arrow[r] & \mc{O}_Z \arrow[r] & 0.
    \end{tikzcd}$$
    If $F$ is any locally free sheaf, tensoring with $F$ and taking cohomology gives the left exact sequence \begin{equation}\label{ideal-exact}\begin{tikzcd}
        0 \arrow[r] & H^0(X,F\otimes I_Z) \arrow[r] & H^0(X,F) \arrow{r}{\cdot|_Z} & H^0(Z,F|_Z),
    \end{tikzcd}\end{equation}
    where the rightmost map is evaluation at $Z$, so $H^0(X,F\otimes I_Z)=\{\sigma\in H^0(X,F):Z\subseteq V(\sigma)\}.$

    Tensoring the Serre exact sequence with $L_1^{-1}\otimes H$ and taking cohomology gives $$\begin{tikzcd}
        0 \arrow[r] & H^0(X,H)\arrow[r] & H^0(X,L_1^{-1}\otimes H\otimes E)\arrow[r] & H^0(X,L_1^{-1}\otimes L_2\otimes H\otimes I_Z).
    \end{tikzcd}$$
    Since the support of $Z$ is not contained in any divisor, we must have $H^0(X,L_1^{-1}\otimes L_2\otimes H\otimes I_Z)=0$ by invoking \eqref{ideal-exact} with $F=L_1^{-1}\otimes L_2\otimes H$.

    Applying \eqref{ideal-exact} again with $F=L_1^{-1}\otimes H\otimes E$, we see that $$h^0(X,L_1^{-1}\otimes H\otimes E\otimes I_Z)\leq h^0(X,L_1^{-1}\otimes H\otimes E)=h^0(X,H),$$ meaning that the restriction of $p_*$ to the identity component is an isomorphism, and therefore there is an isomorphism between $H^0(X,L_2^{-1}\otimes H\otimes E)$ and $H^0(X,\sEnd_0E\otimes H)$.

    Finally, we can tensor the Serre exact sequence by $L_2^{-1}\otimes H$ and take cohomology to get $$\begin{tikzcd}
        0 \arrow[r] & H^0(X,L_1\otimes L_2^{-1}\otimes H) \arrow[r] & H^0(X,L_2^{-1}\otimes H\otimes E) \arrow[r] & H^0(X,H\otimes I_Z)=0,
    \end{tikzcd}$$
    so that $H^0(X,L_1\otimes L_2^{-1}\otimes H)\simeq H^0(X,\sEnd_0E\otimes H)$. Given a section $s \in H^0(X,L_1\otimes L_2^{-1}\otimes H)$ the corresponding element of $H^0(X,\sEnd_0E\otimes H)$ can be given explicitly. Let $\varphi_s:L_2\otimes I_Z\to L_1\otimes H$ be the map given by composing the inclusion of $L_2\otimes I_Z$ in $L_2$ with multiplication by $s$. Then $\phi_s=\iota\circ \varphi_s\circ p$ is naturally a section of $H^0(\sEnd_0E\otimes H)$ which is zero if and only if $s$ is.

    Note that $\phi_s\circ \phi_s=\iota\circ\varphi_s\circ (p\circ \iota)\circ \varphi_s\circ p=0$ by exactness of \eqref{Serre}, so this Higgs is always nilpotent.

\end{proof}
\begin{remark}
    If $L_1,L_2 \in \Pic(X)$ are such that $H^0(X,L_1\otimes L_2^{-1}\otimes \lambda)\neq 0$ for some $\lambda \in \Pic(X)$ with $H^0(X,\mc{T}_X\otimes \lambda^{-1})\neq 0$, then a bundle of type \eqref{Serre} is guaranteed to exist. To see this, note that $\deg(\lambda)<-\deg(K_X)$ and $\deg(L_1\otimes L_2^{-1})> \deg(K_X)$ by the fact that $H^0(X,L_1\otimes L_2^{-1}\otimes \lambda)$ and $H^0(X,\mc{T}_X\otimes \lambda^{-1})$ are both non-zero. The obstruction to the existence of locally-free sheaves satisfying \eqref{Serre} is when $H^2(X,L_1\otimes L_2^{-1})\neq0$. However in this case, $H^2(X,L_1\otimes L_2)\simeq H^0(X,K_X\otimes L_1^{-1}\otimes L_2)=0$ as $\deg(K_X\otimes L_1^{-1}\otimes L_2)<0$. Thus there are vector bundles satisfying \eqref{Serre} which admit non-trivial co-Higgs bundles.
\end{remark}
If instead $E$ is filtrable of the form $$\begin{tikzcd}
    0 \arrow[r] & L_1 \arrow[r] & E \arrow[r] & L_2\otimes I_Z \arrow[r] & 0
\end{tikzcd}$$ and there is an effective divisor $D$ so that the support of $Z$ is contained in $D$, then Higgs fields on $E$ can be computed via elementary modifications.
\begin{definition}
    Let $j:D\hookrightarrow X$ be the inclusion of a prime divisor, and take $\lambda \in \Pic(D)$. 
    An \emph{elementary modification} of a rank-2 bundle $E$ is given by $\ker(p_j)$, where $p_j:E\to j_*\lambda$ is a surjective morphism of sheaves. 
    If $E|_D$ is slope-unstable, the \emph{allowable elementary modification} of $E$ at $D$ is the elementary modification corresponding to the unique surjective map $E|_D\to L$ such that $L\in \Pic(D)$ has $\mu(L)<\mu(E|_D)$, where $\mu$ is the slope function on $D$.
    In this case we say that $E$ has a \emph{jump} at $D$. If an allowable elementary modification of $E$ at $D$ can be performed $\ell$ times before the resulting bundle $E_0$ is such that $E_0|_D$ is semi-stable, we say that the jump of $E$ at $D$ has \emph{length} $\ell$.
    
\end{definition}
Since on a Hopf surface all divisors have trivial self-intersection, the determinant and second Chern class of an elementary modification $$\begin{tikzcd}
    0\arrow[r] & E'\arrow[r] & E \arrow[r] &j_*\lambda \arrow[r] & 0
\end{tikzcd}$$
are given by \begin{align*}
    \det(E')&\simeq \det(E)\otimes \mc{O}_X(-D),\\
    c_2(E)&=c_2(E)+j_*c_1(\lambda).
\end{align*}

\begin{lemma}\label{ideal-fibre}
    Let $E$ be a vector bundle on a Hopf surface $X$ satisfying the exact sequence $$\begin{tikzcd}
    0 \arrow[r] & L_1 \arrow[r] & E \arrow[r] & L_2\otimes I_Z \arrow[r] & 0
\end{tikzcd}$$ for some $L_1,L_2 \in \Pic(X)$ and $Z$ a zero-dimensional subspace, and let $D$ be an integral divisor on $X$ with $Z'=Z\cap D\neq \emptyset$ the corresponding divisor on $D$. Then $E|_D$ is unstable.
\end{lemma}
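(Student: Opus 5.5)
We restrict the Serre sequence to $D$ and produce a sub-line bundle of $E|_D$ whose degree exceeds $\mu(E|_D)$. On a Hopf surface every integral divisor $D$ is an elliptic curve; it is the image of a coordinate axis, or of a curve $\{x^{k}=cy^{\ell}\}$ in the resonant or classical cases. Degrees on $D$ are therefore ordinary degrees of line bundles on an elliptic curve. Moreover $\deg(L|_D)=0$ for every $L\in\Pic(X)$: since $b_2(X)=0$, every line bundle on $X$ is topologically trivial, so its restriction has degree $0$ on $D$. It follows that $\mu(E|_D)=\tfrac12\deg(\det E|_D)=0$.

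The first step is to apply $-\otimes\mc{O}_D$ to the Serre sequence. Right exactness gives
$$L_1|_D\to E|_D\to (L_2\otimes I_Z)|_D\to 0 .$$
The key local computation concerns $(I_Z)|_D=I_Z/I_Z\cdot I_D$. This sheaf surjects onto $I_{Z'}\subset\mc{O}_D$, the image of $I_Z$ in $\mc{O}_D$, where $Z'=Z\cap D$ is scheme-theoretically nonempty. The kernel of this surjection is torsion on $D$. Let $Q$ be the image of $E|_D$ in $L_2|_D$. Then $Q\subseteq L_2|_D\otimes I_{Z'}$, so $Q$ is a line bundle on $D$ of degree at most $\deg(L_2|_D)-\operatorname{length}(Z')=-\operatorname{length}(Z')<0$.

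Let $K=\ker(E|_D\to Q)$. Since $E|_D$ is locally free of rank 2 on the smooth curve $D$ and $Q$ is torsion-free of rank $1$, $K$ is a line subbundle. Taking determinants,
$$\deg K=\deg\det(E|_D)-\deg Q\geq \operatorname{length}(Z')>0=\mu(E|_D),$$
so $K$ destabilizes $E|_D$. Equivalently, $Q$ is a quotient line bundle of negative degree while $\mu(E|_D)=0$.

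The main obstacle is justifying that $E|_D\to L_2|_D$ really factors through $L_2|_D\otimes I_{Z'}$ with image of degree strictly less than $\deg L_2|_D$. The map $E\to L_2$ is the composite $E\to L_2\otimes I_Z\hookrightarrow L_2$, so after restriction its image lies in the image of $(L_2\otimes I_Z)|_D\to L_2|_D$, which is $L_2|_D\otimes I_{Z'}$. The image could only be smaller than this, which only lowers the degree further. So it remains to check that $I_{Z'}\ne\mc{O}_D$. This holds exactly because $Z\cap D\neq\emptyset$: at a point of $Z\cap D$, every local generator of $I_Z$ vanishes there, so its restriction to $D$ vanishes too. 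I expect to spell this out in local coordinates at each point of $\operatorname{supp}(Z)\cap D$. I would also add a brief remark that the degree-zero claim for line bundles restricted to $D$ is standard, citing $b_2(X)=0$.
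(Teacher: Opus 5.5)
Your argument is correct and is essentially the paper's: you restrict the Serre sequence to $D$, use $\mu(E|_D)=0$ (since $H^2(X,\Z)=0$), and exhibit $L_2|_D\otimes I_{Z'}$ as a quotient line bundle of $E|_D$ of negative degree. The only difference is that you obtain this quotient as the image of $E|_D\to L_2|_D$, whereas the paper splits $I_Z|_D$ locally as a line bundle plus $\mc{O}_{Z'}$ using generators $I_{Z,x}=(f,g)$ with $f$ a local equation of $D$ --- an assumption (that the equation of $D$ lies in $I_Z$) which your image argument does not need.
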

\begin{proof}
    Pulling back the exact sequence for $E$ by the inclusion of $D$ gives a right-exact sequence $$\begin{tikzcd}
        L_1|_D \arrow[r] & E|_D \arrow[r] & (L_2\otimes I_Z)|_D\arrow[r] & 0.
    \end{tikzcd}$$ Since $Z$ is a local complete intersection, on the level of stalks there are $f,g \in \mc{O}_{X,x}$ so that $I_{Z,x}=(f,g)$ and $\mc{O}_{D,x}=\mc{O}_{X,x}/(f),$ so $I_{Z,x}\otimes \mc{O}_{D,x}\simeq (g|_D)\oplus \mc{O}_{Z,x}$. Returning to the global picture, we see $I_Z|_D\simeq L_2|_D\otimes \mc{O}_D(-Z')\oplus \mc{O}_{Z'}$. Recall that since Hopf surfaces have trivial Néron-Severi group, the restriction of a vector bundle to $D$ has trivial first Chern class. Now $\mu(E|_D)=0$ and $\mu(L_2|_D\otimes \mc{O}_D(-Z'))=-\deg(Z')<0$, so $E|_D$ must be unstable as it admits a surjection to a line bundle with negative slope.
\end{proof}
\begin{proposition}\label{elementary-modification-sequence}
    Let $X$ be a Hopf surface, and $E$ a filtrable rank-2 vector bundle on $X$. If $E$ has an exact sequence $$\begin{tikzcd}
        0 \arrow[r] & L_1 \arrow[r] & E \arrow[r] & L_2\otimes I_Z \arrow[r] &0
    \end{tikzcd}$$ for some $L_1,L_2 \in \Pic(X)$ and $Z$ a zero-dimensional subspace with support contained in some divisor $D \subset X$, then there is a sequence of allowable elementary modifications taking $E$ to an extension of line bundles.
\end{proposition}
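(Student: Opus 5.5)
Argue by induction on the length of $Z$ (equivalently on $c_2(E)=\mathrm{length}(Z)$, since $c_1$ terms vanish on a Hopf surface), showing that one allowable elementary modification at a suitable component of $D$ yields a bundle with a Serre sequence of the same shape, with $Z$ replaced by a subscheme of strictly smaller length still supported in a divisor. If $Z=\emptyset$ there is nothing to prove, since $E$ is already an extension of line bundles. Otherwise, write $D=\sum D_i$ with each $D_i$ integral; since $\mathrm{supp}(Z)\subseteq D$, some component $D_1$ has $Z'=Z\cap D_1\neq\emptyset$. Lemma \ref{ideal-fibre} then gives that $E|_{D_1}$ is unstable. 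More precisely, its proof produces the surjection
\[
E|_{D_1}\to (L_2\otimes I_Z)|_{D_1}\to L_2|_{D_1}\otimes\mc{O}_{D_1}(-Z'),
\]
onto a line bundle of negative degree on $D_1$. Since $\mu(E|_{D_1})=0$, this is the destabilizing quotient, so it defines the allowable elementary modification $E'=\ker(E\to j_*(L_2|_{D_1}(-Z')))$.

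Next, identify the Serre sequence of $E'$. The composite $L_1\to E\to j_*(L_2|_{D_1}(-Z'))$ vanishes because it factors through $p\circ\iota=0$, so $L_1\subset E'$. The quotient $E'/L_1$ is then the kernel of $L_2\otimes I_Z\to j_*(L_2|_{D_1}(-Z'))$. Locally at a point of $Z'$, write $I_Z=(f,g)$ with $D_1=V(f)$; the map sends $f\mapsto 0$ and $g\mapsto g|_{D_1}$. I expect the kernel to be $L_2(-D_1)\otimes I_{Z''}$, where $Z''$ is the residual subscheme with $I_{Z''}=(I_Z:f)$ and $\mathrm{length}(Z'')=\mathrm{length}(Z)-\mathrm{length}(Z')$. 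Away from $Z$ this is clear: the kernel is $L_2(-D_1)$. The main obstacle is checking this local claim carefully when $Z$ is non-reduced or when $D_1$ is singular at points of $Z$. There I would use the local complete intersection structure as in Lemma \ref{ideal-fibre}, possibly choosing generators of $I_{Z,x}$ adapted to $f$ (this is possible after passing to the curvilinear part). As a fallback, I would compare Chern classes: $c_2(E')=c_2(E)-\deg(Z')$, using the formula stated above with $c_1(\lambda)=-\deg Z'$. This, together with the torsion-freeness of $E'/L_1$ (it is a subsheaf of $L_2\otimes I_Z$), forces the quotient to be $L_2(-D_1)\otimes I_{Z''}$ with $\mathrm{length}(Z'')<\mathrm{length}(Z)$.

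Since $\mathrm{supp}(Z'')\subseteq\mathrm{supp}(Z)\subseteq D$, the new sequence
\[
0\to L_1\to E'\to L_2(-D_1)\otimes I_{Z''}\to 0
\]
again satisfies the hypotheses of the proposition, with strictly smaller $c_2$. Induction then terminates after finitely many allowable elementary modifications at a bundle with $Z=\emptyset$, that is, an extension of $L_2\otimes\mc{O}_X(-D')$ by $L_1$ for some effective divisor $D'$.
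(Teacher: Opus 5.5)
Your proposal is correct and follows essentially the same route as the paper: induction on $c_2(E)=\mathrm{length}(Z)$, an allowable modification along a prime component $D_1$ meeting $Z$ via Lemma \ref{ideal-fibre}, then $L_1\subset E'$ with quotient $L_2(-D_1)\otimes I_{Z''}$ of smaller colength still supported in $D$ (you are in fact more explicit than the paper about why $L_1\subset E'$ and why this quotient is the allowable one). The local step you flag needs no adapted generators: prime divisors on a Hopf surface are smooth elliptic curves, and the kernel of $I_Z\to I_Z\mathcal{O}_{D_1}$ is $I_Z\cap(f)=f\cdot(I_Z:f)$, whose colength $\mathrm{length}(Z)-\mathrm{length}(Z\cap D_1)$ comes from the exact sequence $0\to\mathcal{O}/(I_Z:f)\xrightarrow{f}\mathcal{O}/I_Z\to\mathcal{O}/(I_Z+(f))\to 0$.
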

\begin{proof}
    We prove the result via induction on the second Chern class. Since $NS(X)$ is trivial, $c_2(E)=c_2(I_Z)$, so if $c_2=0$ then $L_2\otimes I_Z$ is a line bundle. Now suppose that the result holds whenever $c_2(I_Z)<k$ for some $k \in \Z$ and that $c_2(E)=k$. Since the support of $Z$ is contained in a divisor of $X$, there is a prime divisor $j:D_0\hookrightarrow E$ so that $Z\cap D_0\neq \emptyset$. By Lemma \ref{ideal-fibre}, $E$ admits an allowable elementary modification $$\begin{tikzcd}
        0 \arrow[r] & E'\arrow[r] & E \arrow[r] & j_*\lambda \arrow[r] & 0
    \end{tikzcd}$$
    with $\lambda\in \Pic(D_0)$ the line bundle of minimal degree admitting a surjection from $E|_{D_0}$. Using the Chern class formula for an elementary modification \cite[Lemma 2.16]{Friedman98}, $$c_2(E')=c_2(E)-c_1(E)\cdot D_0 +j_*c_1(\lambda)=k+j_*c_1(\lambda)<k$$ since $c_1(E)=0, c_2(E)=k$, and $c_1(\lambda)<0$. Note that $E'$ is filtrable fitting into the exact sequence \begin{equation}\label{allowable-exact}\begin{tikzcd}
        0 \arrow[r] & L_1 \arrow[r] & E'\arrow[r] L_2\otimes \mc{O}_X(-D_0)\otimes I_{Z'} \arrow[r] &0
    \end{tikzcd}\end{equation} for some zero-dimensional subspace $Z'$. Since $E'$ is isomorphic to $E$ away from $D_0$, the support of $Z'$ must also be contained in the divisor $D$ containing the support of $Z$. By our induction hypothesis, $E'$ admits a sequence of allowable elementary modifications taking it to an extension of line bundles, so adding  the elementary modification \eqref{allowable-exact} to the sequence gives the result for $E$.
\end{proof}

Now for every filtrable bundle, we can either compute Higgs fields directly or relate it to an extension of line bundles via an allowable elementary modification. By understanding the effect of an allowable elementary modification on Higgs fields, we will be able to fully describe $\lambda$-twisted Higgs bundles for $\lambda \in \Pic(X)$ with $H^0(X,\mc{T}_X\otimes \lambda^{-1})\neq0$.

\begin{lemma}\label{restrict-phi}
    Let $E$ be a rank-2 vector bundle on a Hopf surface $X$ with an allowable elementary modification \begin{equation}\label{allowable}\begin{tikzcd}0 \arrow[r] & E' \arrow{r}{\iota} & E \arrow[r] & j_*L\arrow[r] & 0\end{tikzcd}\end{equation}
    on a divisor $j:D\to X$ with $\deg(L)=d<0$.
    Then $E|_D\simeq L\oplus L'$ for some $L' \in \Pic^{-d}(D),$ and $E'|_D$ is an extension of $L'$ by $L$.
\end{lemma}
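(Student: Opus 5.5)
We want to prove Lemma \ref{restrict-phi}. The plan has three steps: first find the summand $L'$ and split $E|_D$; then identify the kernel of $E'|_D\to E|_D$; and finally assemble the extension for $E'|_D$.

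\textbf{Step 1: the splitting of $E|_D$.} The quotient $E\to j_*L$ in \eqref{allowable} corresponds by adjunction to a surjection $q:E|_D\to L$. Its kernel $L'$ is a line bundle on the curve $D$, being saturated of rank one in the rank-2 bundle $E|_D$. As in the proof of Lemma \ref{ideal-fibre}, $c_1(E|_D)$ is trivial because $NS(X)$ is trivial, so $\deg L'=-d>0$.

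The sequence $0\to L'\to E|_D\to L\to 0$ then has extension class in $H^1(D,L'^{-1}\otimes L)$, and $\deg(L'^{-1}\otimes L)=2d<0$. We would like this group to vanish, but on a curve that vanishing is not automatic. Instead, the plan is to split the sequence using the slope data defining an allowable modification. By definition, $E|_D$ is unstable with maximal destabilizing subbundle $L'$, and $L$ is the unique minimal-slope quotient. The divisors $D$ on a Hopf surface are elliptic curves: they are the images of the coordinate axes, or more generally fibres of the elliptic fibration / torus curves. On an elliptic curve, $H^1(D,M)\simeq H^0(D,M^{-1})^\vee$ by Serre duality with $K_D$ trivial, and this vanishes once $\deg M<0$ by degree reasons. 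So $H^1(D,L'^{-1}\otimes L)=0$, the sequence splits, and $E|_D\simeq L\oplus L'$.

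The main point requiring care is this justification that $D$ has trivial canonical bundle, or at least that $H^1$ vanishes. If $D$ is not assumed smooth, I would instead use adjunction: $K_D=(K_X\otimes\mathcal{O}_X(D))|_D$, which has degree zero since $NS$ is trivial. With arithmetic genus $1$, Serre duality on the Gorenstein curve $D$ still gives the vanishing.

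\textbf{Step 2: the kernel of $E'|_D\to E|_D$.} Restrict \eqref{allowable} to $D$. The left-exact part is governed by $\mathcal{T}or_1(j_*L,\mathcal{O}_D)\simeq L\otimes\mathcal{O}_D(-D)\simeq L$, using that $\mathcal{O}_D(D)$ has degree $0$ and is in fact trivial for fibre-type divisors. If it is not trivial, we simply carry this twist along. This yields the exact sequence
$$0\to L\otimes\mathcal{O}_D(-D)\to E'|_D\to E|_D\to L\to 0.$$

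\textbf{Step 3: assembling $E'|_D$.} The image of $E'|_D\to E|_D$ is $\ker q=L'$. Hence $E'|_D$ is an extension of $L'$ by $L\otimes\mathcal{O}_D(-D)$, which is identified with $L$, as claimed.
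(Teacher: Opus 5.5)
Your route is essentially the paper's. You split $E|_D$ using that $D$ is an elliptic curve, restrict the modification sequence to $D$ to get the four-term sequence with $\mathrm{Tor}_1$, compute $\mathrm{Tor}_1$ from $0\to\mc{O}_X(-D)\to\mc{O}_X\to\mc{O}_D\to 0$, and read off the extension. The paper first writes $E|_D\simeq L\oplus L'$ because unstable bundles on an elliptic curve split, and then uses $\Hom(L',L)=0$ to identify $\ker q$. Defining $L'=\ker q$ first, as you do, is equivalent.

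\textbf{The splitting argument in Step 1 is wrong as written.} It contains two errors that happen to cancel.
The sequence $0\to L'\to E|_D\to L\to 0$ is classified by $\Ext^1_D(L,L')\simeq H^1(D,L'\otimes L^{-1})$, not by $H^1(D,L'^{-1}\otimes L)$. The group you wrote classifies extensions of $L'$ by $L$, which is the shape the lemma asserts for $E'|_D$. It has dimension $-2d>0$.
Your vanishing criterion is also backwards. On an elliptic curve, $H^1(D,M)\simeq H^0(D,M^{-1})^\vee$ vanishes when $\deg M>0$. When $\deg M<0$ it has dimension $-\deg M\neq 0$.
The correct one-line argument is: $\deg(L'\otimes L^{-1})=-2d>0$, so $H^1(D,L'\otimes L^{-1})\simeq H^0(D,L\otimes L'^{-1})^\vee=0$, and the sequence splits. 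This uses only $\deg L<0=\deg E|_D$, not the full instability hypothesis.

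\textbf{The normal-bundle twist in Steps 2--3.} Your computation $\mathrm{Tor}_1^{\mc{O}_X}(\mc{O}_D,j_*L)\simeq j_*(L\otimes\mc{O}_D(-D))$ is more accurate than the paper's, which asserts $\simeq j_*L$. However, your remark that $\mc{O}_D(D)$ is trivial for fibre-type divisors fails for multiple fibres.
For example, on a resonant surface with $r>1$ and $D=V(y)$, $\mc{O}_X(D)\simeq L_b$ restricts to the bundle on $D=\C^*/\langle b^r\rangle$ with constant factor $b$. That bundle is non-trivial $r$-torsion. On a generic surface the normal bundle of $V(x)$ is even non-torsion.
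So what your argument actually proves is that $E'|_D$ is an extension of $L'$ by the degree-$d$ line bundle $L\otimes\mc{O}_D(-D)$. Identifying this with $L$ in Step 3 is valid only when $\mc{O}_D(D)$ is trivial. The lemma's statement and the paper's proof have the same imprecision, so this is not a defect relative to the paper. Still, you should not assert the identification without that hypothesis.
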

\begin{proof}
    First, $D$ is an elliptic curve, and therefore any unstable bundle on $D$ is a direct sum of line bundles. We also have that $\deg(E|_D)=0$ \cite[Corollary 1.5]{Teleman98}, so there must be a line bundle $L'\in \Pic^{-d}(D)$ so that $E|_D\simeq L\oplus L'$. Restricting the short-exact sequence \eqref{allowable} to $D$ gives a new exact sequence $$\begin{tikzcd}
        0 \arrow[r] & j^*\mathrm{Tor}_1^{\mc{O}_X}(\mc{O}_D,j_*L) \arrow[r] & E'|_D \arrow{r}{\iota|_D} & E|_D \arrow[r] & L \arrow[r] & 0.
    \end{tikzcd}$$
    Since $H^0(D,(L')^{-1}\otimes L)=0$, this gives a new short-exact sequence $$\begin{tikzcd}
        0 \arrow[r] & j^*\mathrm{Tor}_1^{\mc{O}_X}(\mc{O}_D,j_*L) \arrow[r] & E'|_D \arrow[r] & L'\arrow[r] & 0.
    \end{tikzcd}$$
    Using the flat resolution $$\begin{tikzcd}
        0 \arrow[r] & \mc{O}_X(-D)\arrow[r] & \mc{O}_X \arrow[r] & \mc{O}_D \arrow[r] & 0
    \end{tikzcd}$$
    of $\mc{O}_D$, we can easily check that $\mathrm{Tor}_1^{\mc{O}_X}(\mc{O}_D, j_*L)\simeq j_*L$, so that $E'|_D$ is an extension of $L'$ by $L$.
\end{proof}
\begin{proposition}\label{allowable-endomorphism}
    Let $E$ be a rank-2 vector bundle on a Hopf surface $X$ with algebraic dimension 1, and suppose that $E$ has an allowable elementary modification \begin{equation}\label{2.5-1}\begin{tikzcd}
        0\arrow[r] & E'\arrow{r}{\iota} & E \arrow[r] & j_*L\arrow[r] & 0
    \end{tikzcd}\end{equation}
    for $j:D\to X$ the inclusion of a prime divisor. 
    Let \begin{equation}\label{2.4-2}\begin{tikzcd}
        0 \arrow[r] &E\otimes \mc{O}_X(-D) \arrow{r}{\iota'} & E' \arrow[r] & j_*(L')\arrow[r] & 0,
    \end{tikzcd}\end{equation}
    be the complementary elementary modification. The map from $\sEnd(E')$ to $\sEnd(E)\otimes \mc{O}_X(D)$ given by $\phi\mapsto \iota\circ \phi\circ \iota'$ induces an isomorphism between $\pi_*(\sEnd_0(E')\otimes \lambda)$ and $\pi_*(\sEnd_0(E)\otimes \lambda\otimes \mc{O}_X(D))$ for any $\lambda \in \Div$, and for any $\phi\in H^0(X,\sEnd(E')\otimes \lambda)$, $(\iota\circ\phi\circ\iota')|_D=0$ if and only if $\phi|_D$ is a multiple of the identity.
\end{proposition}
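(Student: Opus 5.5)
We work locally near $D$ first and then globalize using the elliptic fibration. Since $X$ has algebraic dimension 1, it admits an elliptic fibration $\pi:X\to\mb{P}^1$. The divisor $D$ is a fibre, or a multiple fibre's reduction, so $\mc{O}_X(D)$ is pulled back from a line bundle on the base up to torsion. Every $\lambda\in\Div$ is of this form as well.

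The basic observation is that the two elementary modifications compose to multiplication by the defining section. Write $\sigma_D$ for the section of $\mc{O}_X(D)$ cutting out $D$. Then
$$\iota\circ\iota'=\sigma_D\cdot\id_{E\otimes\mc{O}_X(-D)}\quad\text{and}\quad \iota'\circ(\iota\otimes\id_{\mc{O}_X(-D)})=\sigma_D\cdot\id_{E'\otimes\mc{O}_X(-D)}.$$
Hence the map $\Psi(\phi)=\iota\circ\phi\circ\iota'$ from $\sEnd(E')$ to $\sEnd(E)\otimes\mc{O}_X(D)$ is an injective morphism of sheaves. It is an isomorphism away from $D$, where $\iota$ and $\iota'$ are invertible up to the unit $\sigma_D$. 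Moreover $\Psi$ preserves trace, since $\tr(\iota\phi\iota')=\tr(\phi\,\iota'\iota)=\sigma_D\tr\phi$, which identifies trace-free parts after the twist by $\mc{O}_X(D)$. Because $\Psi(\id)=\sigma_D\id$, it also respects the decomposition $\sEnd=\mc{O}\oplus\sEnd_0$ up to this twist.

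Next we analyze $\Psi$ along $D$ in a local frame. Near a point of $D$, take a local coordinate $z$ with $D=\{z=0\}$, and choose a frame of $E$ adapted to $E|_D\simeq L\oplus L'$ from Lemma \ref{restrict-phi}, with $E\to j_*L$ the projection to the second coordinate. Then $\iota=\mathrm{diag}(1,z)$ and $\iota'=\mathrm{diag}(z,1)$ in suitable frames. For $\phi=\begin{pmatrix}p&q\\r&s\end{pmatrix}$ we get
$$\iota\phi\iota'=\begin{pmatrix}zp & q\\ z^2 r & zs\end{pmatrix}.$$
Dividing by $z$ to land in $\sEnd(E)\otimes\mc{O}(D)$ is the twist convention. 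This explicit form shows that the restriction to $D$ vanishes exactly when $q|_D=0$. For this identification to match the claim that $\phi|_D$ is a multiple of the identity, we must be careful about which twist is meant. In the trace-free version, after subtracting the trace part, vanishing on $D$ corresponds to $\phi|_D$ being scalar: the off-diagonal entry $q$ and the difference $p-s$ must both vanish on $D$. We would redo the local computation with the correct normalization $\Psi(\phi)=\sigma_D^{-1}\iota\phi\iota'$ versus $\iota\phi\iota'$ to pin this down. This bookkeeping is routine but error-prone.

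The main obstacle is the isomorphism after applying $\pi_*$. The cokernel of $\Psi$ on trace-free parts is a sheaf supported on $D$, and it is nonzero. Surjectivity on $\pi_*$ therefore requires showing that every section of $\sEnd_0(E)\otimes\lambda(D)$ over $\pi^{-1}(U)$ satisfies the local vanishing conditions along $D$. The plan is to use that $\pi_*$ of the cokernel, after tensoring with $\lambda$, injects into $H^0$ of a sheaf on the elliptic curve $D$ of the form $L^{-1}\otimes L'$ or its dual. By the allowability condition, $\deg L<\deg L'$.

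So we compare the connecting map $\pi_*(\sEnd_0 E\otimes\lambda(D))\to\pi_*(\text{coker})$ with the restriction $H^0(D,\sEnd_0(E|_D))$. We then use the splitting $E|_D\simeq L\oplus L'$ and $H^0(D,L\otimes L'^{-1})=0$ to show that the relevant component of any section restricted to $D$ lies in the image. The fact that $\lambda|_D$ and $\mc{O}(D)|_D$ are degree $0$, indeed torsion, is what makes these degree arguments on $D$ work.
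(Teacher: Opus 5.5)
Your proposal has a genuine gap in both halves of the statement.

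\textbf{The ``if and only if'' part.} Your local computation is correct and needs no renormalization. The expression $\iota\phi\iota'=\begin{pmatrix}zp&q\\ z^2r&zs\end{pmatrix}$ is already a holomorphic map $E(-D)\to E$, i.e.\ a section of $\sEnd(E)\otimes\mc{O}_X(D)$. It vanishes on $D$ exactly when $q|_D=0$, that is, when $\phi|_D$ preserves the line $\ker(\iota|_D)=\im(\iota'|_D)\simeq L$. Your suggested repair, that after removing the trace one needs both $q$ and $p-s$ to vanish on $D$, does not follow from anything you wrote. More importantly, the step from ``$\phi|_D$ preserves $L$'' to ``$\phi|_D$ is scalar'' is not a local statement. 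The paper supplies it globally on the elliptic curve $D$. If $c$ is the scalar induced on $L$, then $\phi|_D-c\,\id$ kills $L$ and so factors through a map $L'\to E'|_D$ with $\deg L'>0$. That map vanishes because $E'|_D$ is semistable of degree $0$ when the jump has length one. For longer jumps $E'|_D\simeq M\oplus M'$ is unstable and this argument breaks down. That is why the paper inducts on the length of the jump: by the inductive hypothesis a trace-free $\phi$ not vanishing on $D$ restricts to a nonzero nilpotent $a\colon M\to M'$, and then $(\iota\phi\iota')|_D$ has the nonzero entry $ax^2$.

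\textbf{The isomorphism.} The same ingredient is missing from your surjectivity argument. For a trace-free $\psi=\begin{pmatrix}P&Q\\ R&-P\end{pmatrix}$, lying in $\im\Psi$ means $P|_D=0$ and $R$ vanishing to order $2$ along $D$. So the cokernel is not even an $\mc{O}_D$-module, and the splitting of $E|_D$ together with $H^0(D,L\otimes L'^{-1})=0$ only gives $R|_D=0$. In the paper the remaining conditions come from two facts:
\begin{enumerate}
\item $\pi_*\sHom(E',j_*L\otimes\lambda)\simeq\mc{O}_{\pi(D)}\otimes H^0(D,E'|_D^\vee\otimes L)=0$, which uses semistability of $E'|_D$;
\item the map to $\pi_*\sExt^1(j_*L',E\otimes\lambda)\simeq\mc{O}_{\pi(D)}$, which is nonzero, is identified with $\psi\mapsto\frac12\tr\psi|_D$, so it kills trace-free sections.
\end{enumerate}

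Your inputs cannot suffice, as a local model shows. Let $D$ be a non-multiple fibre, $\Sigma$ a local section of $\pi$ through $p\in D$, and $E=\mc{O}(-\Sigma)\oplus\mc{O}(\Sigma)$ on $\pi^{-1}(U)$.
\begin{itemize}
\item Here $E|_D\simeq\mc{O}_D(-p)\oplus\mc{O}_D(p)$ satisfies everything you use.
\item But $E'=\mc{O}(-\Sigma-D)\oplus\mc{O}(\Sigma)$ again splits unstably on $D$.
\item Take $\tau$ a frame of $\mc{O}_X(D)$. The trace-free section $\mathrm{diag}(1,-1)\otimes\tau$ restricts to a non-nilpotent endomorphism of $E|_D$, so it is not in $\im\Psi$.
\item The section $\phi=\mathrm{diag}(1,-1)$ on $E'$ has $(\iota\phi\iota')|_D=0$ without $\phi|_D$ being scalar.
\end{itemize}
Both conclusions fail in this model. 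A correct proof must therefore use semistability of $E'|_D$ together with the induction on the (finite) length of the jump.
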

\begin{remark}
    Using the projection formula, it an immediate consequence of the above proposition that $H^0(X,\sEnd_0(E')\otimes \lambda)\simeq H^0(X,\sEnd_0(E)\otimes \lambda\otimes \mc{O}_X(D))$ for any $\lambda \in \Div$.
\end{remark}
\begin{proof}
    
    Note that $\iota$ and $\iota'$ are both isomorphisms on a dense open set, so $\pi_*\sEnd_0(E')$ and $\pi_*\sEnd_0(E)$ have the same rank.

     We prove this result by induction on the length of the jump of $E$ at $D$. 
     We start with the base case where an allowable elementary modification at $D$ can be performed exactly once, meaning that $E'|_D$ is semi-stable. 
     In this case, we first look at the left exact sequence $$\begin{tikzcd}
        0 \arrow[r] & \pi_*(\sEnd(E')\otimes \lambda) \arrow{r}{\iota_*} & \pi_*(\sHom(E', E)\otimes \lambda) \arrow[r] & \pi_*\sHom(E', j_*L\otimes \lambda)
    \end{tikzcd}$$
    generated by tensoring the sequence \eqref{2.5-1} with $E'^*\otimes \lambda$ and taking the pushforward by $\pi$. As $\pi(D)$ is a point, we have an isomorphism $\pi_*\sHom(E', j_*L\otimes)\simeq \mc{O}_{\pi(D)}\otimes H^0(D, E'|_D^\vee \otimes L)$ (recall that $\lambda|_D=\mc{O}_D$ for any $\lambda\in \Div$). 
    Since $E'$ does not admit an allowable elementary modification at $D$ and $\deg(L)<0$, $H^0(D, E'|_D^\vee \otimes L)=0$, so that pushing forward by $\iota$ gives an isomorphism $\pi_*(\sEnd(E')\otimes \lambda)\simeq \pi_*(\sHom(E',E)\otimes \lambda).$

    We now look at the exact sequence $$\begin{tikzcd}
        0 \arrow[r] & \pi_*(\sHom(E',E)\otimes \lambda) \arrow{r}{\iota'^*} & \pi_*(\sEnd(E)\otimes\lambda\otimes \mc{O}_{X}(D)) \arrow[r] & \pi_*\sExt_X^1(j_*L',E\otimes \lambda)
    \end{tikzcd}$$
    obtained by applying $\pi_*\sHom(\cdot, E\otimes \lambda)$ to the exact sequence \eqref{2.4-2}. As shown in \cite[p.42]{Friedman98}, $$\pi_*\sExt_X^1(j_*L',E)\simeq \mc{O}_{\pi(D)}\otimes H^0(D,E|_D\otimes L'^{-1})=\mc{O}_{\pi(D)}\otimes H^0(D, \mc{O}_D\oplus (L\otimes L'^{-1}))=\mc{O}_{\pi(D)}.$$
    Substituting with the isomorphism from pushing forward by $\iota$ gives the left-exact sequence $$\begin{tikzcd}
        0 \arrow[r] & \pi_*(\sEnd(E')\otimes \lambda)\arrow{r}{\iota_*\circ\iota'^*} & \pi_*(\sEnd(E)\otimes \lambda\otimes \mc{O}_{X}(D))\arrow{r}{p_D} & \mc{O}_{\pi(D)}.
    \end{tikzcd}$$
    Note that for any local section $\phi \in H^0(U,\sEnd(E'))$, $(\iota\circ\phi\circ\iota')|_D$ is nilpotent, since $\ker(\iota|_D)=\im(\iota'|_D)\simeq L$ and $\im(\iota|_D)=\ker(\iota'|_D)\simeq L'$ by Lemma \ref{restrict-phi}. Also, since $\iota\circ\iota'$ is exactly $s\id_E$ for some $s\in H^0(X,\mc{O}_X(D))$ which vanishes on $D$, the map $p_D$ can be interpreted as $\psi\mapsto \frac{1}{2}\tr \psi|_{\pi(D)}$. We can now conclude that the above exact sequence splits into $$
       \begin{tikzcd} 0 \arrow[r] &\pi_*\lambda \arrow[r] &\pi_*(\lambda\otimes \mc{O}_{X}(D)) \arrow[r] & \mc{O}_{\pi(D)}\end{tikzcd}$$
       and
       $$\begin{tikzcd}
           0 \arrow[r] & \pi_*(\sEnd_0(E')\lambda) \arrow[r] & \pi_*(\sEnd_0(E)\otimes \lambda\otimes \mc{O}_{B}(D)) \arrow[r] & 0,
       \end{tikzcd}$$
       giving the desired isomorphism.
       
       It remains to show that $\phi|_D$ is a multiple of the identity if and only if $(\iota\circ\phi\circ\iota')|_D=0$. 
       One direction immediately follows from the fact that $\ker(\iota|_D)=\im(\iota'|_D)$, so assume that $(\iota\circ\phi\circ\iota')|_D=0$. 
       Then we must have $\im((\phi\circ\iota')|_D)\subseteq \ker(\iota|_D)=\im(\iota'|_D)$, so in particular $\im(\iota'|_D)\simeq L$ is $\phi|_D$ invariant. 
       But $\phi|_D$ is a section in $H^0(D,\sEnd(E'|_D))$ where $E'$ is a semi-stable bundle of degree zero. It is easy to check that any endomorphism of such an $E'$ admitting a negative-degree invariant subbundle with torsion-free quotient is a multiple of the identity.

       Now suppose that the result holds for bundles whose jump at $D$ has length $\ell$ for some $\ell\geq 1$, and let $E$ be a bundle whose jump at $D$ has length $\ell+1$. 
       Choose $\lambda\in \Div$ and $\phi\in H^0(X,\sEnd_0(E')\otimes\lambda)$ so that $\phi$ does not vanish identically on $D$. Suppose that $E'|_D\simeq M\oplus M'$ for some $M,M'\in \Pic(D)$ with $\deg(M)=-\deg(M')<0$. 
       By our assumption that the proposition holds for $E'$, necessarily there is a non-zero section $a \in H^0(D,M'\otimes M^{-1})$ so that $$\phi|_D=\begin{bmatrix}0 & 0\\a & 0\end{bmatrix}.$$ 
       By Lemma \ref{restrict-phi} there are sections $x\in H^0(D,M\otimes L^{-1})$ and $y\in H^0(D,M'\otimes L^{-1})$ with no common zeros so that \begin{align*}\iota'|_D=\begin{bmatrix}x & 0\\ y & 0\end{bmatrix}, && \iota|_D=\begin{bmatrix}
           0 & 0\\ -y & x
       \end{bmatrix},\end{align*} where we are using the isomorphisms $E'|_D\simeq M\oplus M'$ and $E|_D\simeq L\oplus L'$. 
       (Since $M\oplus M'$ and $L\oplus L'$ have the same determinant, $M\otimes L^{-1}\simeq L'\otimes (M')^{-1}$ and $M'\otimes L^{-1}\simeq L'\otimes M^{-1}$.) 
       Combining these representations gives 
       $$(\iota\circ\phi\circ\iota')|_D=\begin{bmatrix}0 & 0\\ -y & x\end{bmatrix}\begin{bmatrix}0 & 0\\ a & 0\end{bmatrix}\begin{bmatrix}x & 0\\y & 0\end{bmatrix}=\begin{bmatrix}0 & 0\\ ax^2 & 0\end{bmatrix},$$ 
       so that $\phi\mapsto \iota\circ\phi\circ\iota'$ vanishes on at most a finite set (the zero-set of $x$). 
       Choose $\lambda \in \Div$ and $\{\phi_1, \ldots, \phi_k\}\subset H^0(X,\sEnd_0(E')\otimes\lambda)$ so that $\{\phi_1,\ldots, \phi_k\}$ gives rise to a local frame for $\pi_*(\sEnd_0(E')\otimes \lambda)$ in a Zariski-neighbourhood of $\pi(D)$. 
       Then since $\phi\mapsto \iota\circ\phi\circ\iota'$ is an isomorphism away from $D$, $\{\iota\circ\phi_1\circ\iota', \ldots, \iota\circ\phi_k\circ\iota'\}$ give a local frame for $\pi_*(\sEnd_0(E)\otimes\lambda\otimes \mc{O}_{X}(D))$ on a non-empty Zariski-open subset. 
       Furthermore, any local section of$\pi_*(\sEnd_0(E)\otimes\lambda\otimes \mc{O}_{X}(D))$ will be given by a $K(B)$-linear combination of the $\{\iota\circ\phi_1\circ\iota', \ldots, \iota\circ\phi_k\circ\iota'\}$. 
       Note that there is no divisor on which a $\iota\circ\phi_i\circ\iota'$ vanishes but the corresponding $\phi_i$ does not, so $\phi\mapsto\iota\circ\phi\circ\iota'$ gives a genuine isomorphism between $\pi_*\sEnd_0(E')$ and $\pi_*\sEnd_0(E)\otimes \mc{O}_{B}(\pi(D))$.

       Finally, note that for any line bundle $\lambda\in \Pic(B)$ and $\phi\in H^0(X, \sEnd(E')\otimes \pi^*\lambda)$, $\phi$ can be uniquely written as $\phi=s\id_{E'}+\phi_0$, where $s \in H^0(X,\pi^*\lambda)$ and $\phi_0\in H^0(X,\sEnd_0(E')\otimes \pi^*\lambda).$ As shown in the base case, $(\iota\circ s\id_{E'}\circ \iota')|_D=0$ since $\ker(\iota|_D)=\im(\iota'|_D)$, so $(\iota\circ\phi\circ\iota')|_D=(\iota\circ\phi_0\circ\iota')|_D$. If $\phi_0$ vanishes identically on $D$, then $\phi_0|_D=0\id_{E'}$, and we already showed that $(\iota\circ\phi_0\circ\iota')|_D\neq 0$ if $\phi_0$ does not vanish identically on $D$.
\end{proof}
For the Hopf surfaces with algebraic dimension 0, we will use a slightly different technique to address elementary modifications.
\begin{lemma}\label{alg-zero-nilpotent}
    Let $X$ be a Hopf surface with algebraic dimension 0 (so of generic or exceptional type), and let $(E,\phi)$ be a rank-2 trace-free $\lambda$-twisted Higgs bundle for some $\lambda \in \Div$ such that $E$ admits an allowable elementary modification 
    $$\begin{tikzcd}
        0 \arrow[r] & E' \arrow[r] & E \arrow[r] & j_*L\arrow[r] & 0
    \end{tikzcd}$$ for an irreducible divisor $j:D\hookrightarrow X$ and a line bundle $L \in \Pic(D)$ with $\deg(L)<0$, and $\phi\neq 0$. Then $\phi|_D$ is nilpotent with kernel containing the maximal destabilizing bundle of $E|_D$.
\end{lemma}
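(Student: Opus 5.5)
We need to show that $\phi|_D$ is nilpotent and that its kernel contains the destabilizing summand. The plan is to exploit the fact that on a surface of algebraic dimension $0$ every section of a line bundle in $\Div$ is essentially a monomial supported on the unique curves. In particular, any $\lambda\in\Div$ restricts trivially to $D$, i.e.\ $\lambda|_D\simeq\mc{O}_D$. This holds because in the generic and exceptional cases $\Div$ is generated by the line bundles $L_a,L_b$ (resp.\ $L_b$), whose sections are $x$ and $y$, and each of these restricts to $D$ as either the trivial bundle or a bundle with a nowhere-zero section. The trivial Néron--Severi group also forces degree zero on the elliptic curve $D$. Hence $\phi|_D$ is a trace-free section of $\sEnd_0(E|_D)$, twisted possibly only by a degree-zero line bundle on $D$.

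By Lemma \ref{restrict-phi}, $E|_D\simeq L\oplus L'$ with $\deg L=d<0<-d=\deg L'$, and $L'$ is the maximal destabilizing subbundle. With respect to this splitting we write
$$\phi|_D=\begin{pmatrix} \alpha & \beta\\ \gamma & -\alpha\end{pmatrix},$$
where $\alpha\in H^0(D,\lambda|_D)$, $\gamma\in H^0(D,L'\otimes L^{-1}\otimes\lambda|_D)$, and $\beta\in H^0(D,L\otimes L'^{-1}\otimes\lambda|_D)$. Since $\deg(L\otimes L'^{-1})=2d<0$ and $\lambda|_D$ has degree zero, we get $\beta=0$. The restriction is therefore lower triangular, with $L'$ mapping into $L'$ via $-\alpha$. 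It then suffices to prove that $\alpha=0$. Once that is done, $\phi|_D$ is nilpotent and kills the summand $L'$, which is exactly the statement.

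To show $\alpha=0$, observe that $\alpha$ is a constant section of $\lambda|_D$ (nonzero only if $\lambda|_D\simeq\mc{O}_D$), and $\alpha^2=-\det\phi|_D$. The key step is to show that $\det\phi\in H^0(X,\lambda^2)$ vanishes on $D$. On a Hopf surface of algebraic dimension $0$, $H^0(X,\lambda^2)$ is spanned by a single monomial in $x,y$ (see Table \ref{line-bundles-Hopf}), and its restriction to $D$ is nonzero only if $\lambda^2\simeq\mc{O}_X$ or the monomial avoids $D$. I would argue by cases on $\lambda$:
\begin{itemize}
\item If $\lambda$ is nontrivial with sections vanishing on $D$, then $\alpha$, being a section of $\lambda$ restricted to $D$, is forced to be zero.
\item If $\lambda$ is trivial, or its section avoids $D$ (so we may reduce to $\lambda\simeq\mc{O}_X$ near $D$), then $\alpha$ is the restriction of a global $\mc{O}_X$-eigenvalue-type quantity. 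In that case $\det\phi$ is a constant $c$. If $c\neq0$, then $\phi$ has two distinct constant eigenvalues $\pm\sqrt{-c}$ everywhere, so $E$ splits globally into the two eigen-line-bundles. A direct sum of line bundles has $c_2=0$ and restricts to $D$ as a sum of degree-zero bundles, contradicting the jump at $D$, since $E|_D$ must be unstable.
\end{itemize}
Hence $c=0$, so $\alpha=0$.

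The main obstacle is making the case analysis on $\lambda|_D$ honest. I need to identify which elements of $\Div$ have sections nonvanishing on $D$: for the generic surface, $D$ is one of the two curves $\{x=0\}$ or $\{y=0\}$, and for the exceptional surface it is $\{y=0\}$. I also need to handle the situation where $\lambda|_D$ is trivial but $\lambda$ itself is not, for instance $\lambda=L_a^k$ restricted to $\{y=0\}$. For that situation I would first try the eigenvalue argument applied to $\det\phi\in H^0(X,\lambda^2)$: its restriction to $D$ is a nonzero multiple of $x^{2k}$ only if $\phi$ has eigenvalues $\pm\sqrt{\cdot}\,x^k$ up to a square root, which again yields invariant eigen-line-bundles $\lambda$-twisted near $D$ and contradicts the instability of $E|_D$.
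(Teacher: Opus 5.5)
Your argument is correct in substance and uses the same mechanism as the paper. Since $h^0(X,\lambda^2)\le 1$, $\det\phi=-\mu^2$ for a global section $\mu\in H^0(X,\lambda)$. So either $\phi$ is nilpotent, or its eigen-line subsheaves are global line bundles; these have degree $0$ on $D$, which is incompatible with $E|_D\simeq L\oplus L'$ unless $\mu|_D=0$.

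The packaging differs. The paper notes that $\Hom(M|_D,L)=0$ forces each $\phi$-invariant sub-line bundle $M$ to factor through $E'$, i.e.\ to land in $L'$ on $D$, and reads off the kernel from this. You instead get $\beta=0$ (so $L'$ is $\phi|_D$-invariant) from a degree count on $D$, and reduce to the diagonal entry via $\alpha^2=-\det\phi|_D$. You then get the contradiction from the eigen-splitting making $E|_D$ semistable. Your version has the merit of making the triangular shape of $\phi|_D$, and hence the conclusion, fully explicit.

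Two points need tightening.

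\emph{First bullet.} There $\alpha$ is only a section of $\lambda|_D$, not the restriction of a global section of $\lambda$. Moreover, $\lambda|_D$ can be trivial even when the sections of $\lambda$ vanish on $D$, or when $\lambda$ has no sections. For example, on the exceptional surface with $D=\{y=0\}\cong\C^*/\langle b^r\rangle$, the coordinate $x$ is a nowhere-vanishing section of $L_b^r|_D$, although $H^0(X,L_b^r)=\C y^r$. The same happens for $L_b^{-r}$, and for $L_a^{-k}$ with $D=\{y=0\}$ on a generic surface. In these cases the bullet as written does not force $\alpha=0$.

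The fix is the route you announced yourself. Use $\alpha^2=-\det\phi|_D$, where $H^0(X,\lambda^2)$ is either zero or spanned by the square of the monomial spanning $H^0(X,\lambda)$; in the bullet-one situation that monomial vanishes on $D$. Relatedly, your opening claim that every $\lambda\in\Div$ restricts trivially to $D$ is false: $L_b|_{\{y=0\}}$ on a generic surface has factor of automorphy $b\notin a^{\Z}$. This is harmless, since only the degree-zero statement (from $H^2(X,\Z)=0$) is actually used.

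\emph{Last step.} Take the eigen-line subsheaves $\ker(\phi\mp\mu\,\id_E)$ on all of $X$, not just ``near $D$''. This is legitimate because $\mu=\sqrt{-c}\,x^k$ is a global section of $\lambda$. Their saturations are then line bundles on $X$ and so have degree $0$ on $D$. A line bundle defined only on a neighbourhood of $D$ need not have degree $0$ on $D$.
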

\begin{proof}
    Note that for any $\lambda\in \Div$ and $s \in H^0(X,\lambda^2)$, $s$ is a perfect square as squaring gives an inclusion $H^0(X,\lambda)\hookrightarrow H^0(X,\lambda^2)$ and $h^0(X,\lambda^2)\leq 1$. If $(E,\phi)$ are as in the statement of the Lemma, then the fact that $\det(\phi)$ is a square implies that either $\phi$ is nilpotent or its spectral cover has two irreducible components. In the first case, $\ker(\phi)$ contains an invariant sub-line bundle $M$ of $\phi$. Using the exact sequence $$\begin{tikzcd}
    0 \arrow[r] &\Hom(M,E')\arrow[r] & \Hom(M,E)\arrow[r] & \Hom(M|_D, L)=0,
    \end{tikzcd}$$
    we see that the inclusion of $M$ factors through the elementary modification. This implies that $\phi|_D$ is nilpotent with kernel $E|_D/L$.

    If $\phi$ is not nilpotent, then there are two $\phi$-invariant sub-line bundles $M$ and $M'$ which generically span $E$. Again, the inclusions of $M$ and $M'$ factor through the elementary modification, so $M|_D$ and $M'|_D$ are both subsheaves of $E|_D/L$. Since $\phi$ is uniquely determined by its values away from $D$, we must have $E|_D/L$ contained in the kernel of $\phi|_D$.
\end{proof}
\begin{proposition}
    Let $E$ be a rank-2 vector bundle on a Hopf surface $X$ with algebraic dimension 0, and suppose that $E$ has an allowable elementary modification \begin{equation}\label{6.10-1}\begin{tikzcd}
        0\arrow[r] & E'\arrow{r}{\iota} & E \arrow[r] & j_*L\arrow[r] & 0
    \end{tikzcd}\end{equation}
    for $j:D\to X$ the inclusion of a prime divisor. 
    Let \begin{equation}\label{6.10-2}\begin{tikzcd}
        0 \arrow[r] & E\otimes \mc{O}_X(-D) \arrow{r}{\iota'} & E' \arrow[r] & j_*(L')\arrow[r] & 0,
    \end{tikzcd}\end{equation}
    be the complementary elementary modification. The map from $\sEnd(E')$ to $\sEnd(E)\otimes \mc{O}_X(D)$ given by $\phi\mapsto \iota\circ \phi\circ \iota'$ induces an isomorphism between $H^0(X,\sEnd_0(E')\otimes \lambda)$ and $H^0(X,\sEnd_0(E)\otimes \lambda\otimes \mc{O}_X(D))$ for any $\lambda \in \Div$.
\end{proposition}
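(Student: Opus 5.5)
Since the elementary fibration $\pi$ is no longer available in algebraic dimension $0$, I will argue directly on global sections, using Lemma \ref{alg-zero-nilpotent} in place of the push-forward argument of Proposition \ref{allowable-endomorphism}. Throughout, $\Psi:\phi\mapsto\iota\circ\phi\circ\iota'$ denotes the map in the statement.

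\emph{Step 1: $\Psi$ is well defined and injective.} Since $\iota$ and $\iota'$ are isomorphisms away from $D$, $\Psi$ is injective on sections. Because $\iota\circ\iota'=s\,\id_E$ with $s\in H^0(X,\mc{O}_X(D))$ cutting out $D$, we have $\tr(\iota\phi\iota')=s\tr\phi$ generically. Hence trace-free sections go to trace-free sections, and $\Psi$ restricts to an injection
\[
H^0(X,\sEnd_0(E')\otimes\lambda)\to H^0(X,\sEnd_0(E)\otimes\lambda\otimes\mc{O}_X(D)).
\]

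\emph{Step 2: surjectivity.} Let $\psi\in H^0(X,\sEnd_0(E)\otimes\lambda\otimes\mc{O}_X(D))$ be nonzero. Since $\mc{O}_X(D)\in\Div$, $\lambda\otimes\mc{O}_X(D)\in\Div$, so Lemma \ref{alg-zero-nilpotent} applies to $(E,\psi)$ twisted by $\lambda\otimes\mc{O}_X(D)$. It shows that $\psi|_D$ is nilpotent and its kernel contains the destabilizing subbundle $L'\subset E|_D$, which is $\ker(E|_D\to L)=\im(\iota|_D)$ by Lemma \ref{restrict-phi}. Nilpotency with this kernel forces $\im(\psi|_D)\subseteq L'$ as well.

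Consequently $\psi\circ\iota$ composed with $E\to j_*L$ vanishes on $D$, so $\psi\circ\iota$ factors through $E'$. That is, $\psi\circ\iota=\iota\circ\chi$ for some $\chi\in H^0(X,\sEnd(E')\otimes\lambda\otimes\mc{O}_X(D))$.

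Next I will show that $\chi$ vanishes along $D$, so that $\chi=s\,\phi$ with $\phi\in H^0(X,\sEnd(E')\otimes\lambda)$. For this, restrict to $D$ and use that $\psi|_D$ kills $\im(\iota|_D)$. This gives $\iota|_D\circ\chi|_D=0$, so $\chi|_D$ takes values in $\ker(\iota|_D)=\im(\iota'|_D)\simeq L$. I then need the additional vanishing $\chi|_D(\ker\,)=0$ on $\im(\iota'|_D)$, which should follow from $\psi\circ\iota\circ\iota'=s\psi$ vanishing on $D$ to first order. This forces $\chi|_D$ to factor through $E'|_D\to L'$, giving a map $L'\to L$. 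Since $\deg L<0<\deg L'$, there are no nonzero maps $L'\to L$, so $\chi|_D=0$.

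Then $\iota\phi\iota' s=\psi\iota\iota'=s\psi$, so $\Psi(\phi)=\psi$. Trace-freeness of $\phi$ follows from Step 1.

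\emph{Main obstacle.} The delicate point is the middle of Step 2: showing that $\chi$ vanishes on $D$, rather than merely having image in $L$. When $E'|_D$ is itself unstable (jumps of length $>1$), the structure of $E'|_D$ is only an extension of $L'$ by $L$, so I would first try the degree argument above. If that fails, I would proceed by induction on the length of the jump, as in Proposition \ref{allowable-endomorphism}, applying Lemma \ref{alg-zero-nilpotent} to $(E',\chi)$.
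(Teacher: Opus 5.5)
Your proposal is correct, and it takes a different route from the paper. The step you left open closes in one line. Off $D$ you have $\chi=\iota^{-1}\circ\psi\circ\iota$, so $\tr\chi=\tr\psi=0$. Since $\iota|_D\circ\chi|_D=0$, the endomorphism $\chi|_D$ has image in the line subbundle $K:=\ker(\iota|_D)\subset E'|_D$. It therefore preserves $K$ and induces zero on $E'|_D/K\simeq L'$, so the scalar by which it acts on $K$ equals $\tr(\chi|_D)=0$. (This is the precise content of your first-order remark: dividing $\iota\chi\iota'=s\psi$ by $s$ identifies that scalar with the endomorphism of $E|_D/L'\simeq L$ induced by $\psi|_D$. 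That endomorphism vanishes because $\im(\psi|_D)\subseteq L'$.) Hence $\chi|_D$ factors through a map from $L'$ to $K$ twisted by a degree-zero bundle. Such a map is zero because $\deg K=\deg L<0<\deg L'$. This holds regardless of the isomorphism type of $E'|_D$, so no induction on the length of the jump is needed.

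The paper instead peels off the two factors from opposite sides. It first uses Friedman's identification $\Ext^1(j_*L',E\otimes\lambda)\simeq H^0(D,E|_D\otimes L'^{-1})$ to write the Higgs field as $\psi_1\circ\iota'$, with $\psi_1$ a map $E'\to E\otimes\lambda$. The obstruction there is the $L'\to L'$ component of $\psi|_D$. It then lifts $\psi_1$ through $\iota$, arguing that $p_L\circ\psi_1|_D\neq 0$ would force $p_L\circ(\psi_1\circ\iota')|_D\neq0$, contradicting Lemma~\ref{alg-zero-nilpotent}. Your conjugation $\chi=\iota^{-1}\psi\iota$ keeps everything inside $\sEnd(E')$, which is what makes the trace argument available. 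It also replaces the Ext computation by the elementary factorization through $\ker(E\to j_*L)$, and handles the second obstruction by an explicit degree count rather than the local check the paper leaves as ``one can check''. Both arguments use exactly the same input: Lemma~\ref{alg-zero-nilpotent}, giving $\ker(\psi|_D)\supseteq L'$ and hence $\im(\psi|_D)\subseteq L'$.
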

\begin{proof}
    First consider the exact sequence $$\begin{tikzcd}
    H^0(X,\sHom(E',E)\otimes \lambda)\arrow{r}{\iota'^*} & H^0(X,\sEnd(E)\otimes \lambda)\otimes \mc{O}_X(D)) \arrow{r}{p_D} & \Ext^1(j_*L', E\otimes \lambda)
\end{tikzcd}$$ obtained by applying $H^0(X,\sHom(\cdot, E\otimes \lambda)$ to the exact sequence \eqref{6.10-2}. As in \cite[p.42]{Friedman98}, $$\Ext^1(j_*L', E\otimes \lambda)=H^0(D,E|_D\otimes L'^{-1})\simeq\C.$$ The map $p_D$ gives the component of $\phi|_D$ which maps $L'$ to $L'$, which is zero for any trace-free Higgs field by Lemma \ref{alg-zero-nilpotent}. Thus every Higgs field $\phi\in H^0(X,\sEnd_0(E)\otimes \lambda\otimes \mc{O}_X(D))$ is of the form $\psi\circ\iota'$ for some $\psi \in H^0(X,\sHom(E,E')\otimes \lambda)$.

Now consider the exact sequence $$\begin{tikzcd}
    0 \arrow[r] & H^0(X,\sEnd(E')\otimes \lambda)\arrow[r] & H^0(X,\sHom(E,E')\otimes \lambda)\arrow[r] & H^0(D, E'^\vee|_D \otimes L)
\end{tikzcd}$$ obtained by applying $H^0(X,\sHom(\cdot, E\otimes \lambda))$ to the exact sequence \eqref{6.10-1}. Suppose there is a $\phi =\psi\circ \iota' \in H^0(X,\sEnd(E)\otimes \lambda\otimes \mc{O}_X(D))$ so that $\psi$ is not of the form $\iota\circ\phi_0$ for some $\phi_0 \in H^0(X,\sEnd_0(E)\otimes \lambda)$. Then $\psi$ is such that $p_L\circ \psi|_D\neq 0$, where $p_L$ is the projection from $E|_D$ to $L$. However, one can check that then $p_L\circ (\psi\circ\iota')|_D\neq 0$. By Lemma \ref{alg-zero-nilpotent}, we must have that $\psi\circ \iota'$ is a multiple of the identity. Therefore, any $\phi \in H^0(X,\sEnd_0(E)\otimes \lambda\otimes \mc{O}_X(D))$ is of the form $\iota\circ\phi_0\circ \iota'$ for some $\phi_0\in H^0(X,\sEnd_0(E')\otimes \lambda)$.
\end{proof}

Suppose that $E$ is a vector bundle satisfying the hypotheses of Proposition \ref{elementary-modification-sequence}, so that there is a sequence of allowable elementary modifications 

\section{Symplectic foliations and Poisson subspaces}\label{foliations}
Let $(E,\Phi)$ be a non-zero rank-2 trace-free co-Higgs bundle over a Hopf surface $X$, and let $(\mb{P}(E), \sigma_\Phi)$ be the associated Poisson manifold. 
The goal of this section is to describe the symplectic foliation associated to $\sigma_\Phi$. 
Since $\mb{P}(E)$ is three-dimensional and symplectic leaves have even dimension, it suffices to consider symplectic leaves of dimensions zero and two.
\subsection{Zero-dimensional symplectic leaves}
As shown in \cite[Theorem 4.1.3]{AliMedinaThesis}, the union of the zero-dimensional symplectic leaves of $\sigma_\Phi$ is precisely the \emph{eigen-variety} of $\Phi$.
\begin{definition}
    If $(E,\Phi)$ is any twisted Higgs bundle, the \emph{eigen-variety} $\mc{E}(\Phi)\subset \mb{P}(E)$ of $\Phi$ is the set $$\mc{E}(\Phi):=\left\{(x,[v]) \in \mb{P}(E): \Phi_x(v)\wedge v=0\in \wedge^2E_x\otimes \mc{T}_{X,x}\right\}.$$
\end{definition}
One particular case where we can describe an irreducible component of the eigen-variety is when the Higgs field $\Phi$ has an invariant sub-line bundle.
\begin{proposition}
    If $\Phi\neq 0$ and $\iota:L\hookrightarrow E$ is an invariant sub-line bundle of $\Phi$ with torsion-free co-kernel, then $\mc{E}(\Phi)$ will contain the vanishing locus $V(L)\subset \mb{P}(E)$ as an irreducible component, where $V(L):=(\mathrm{Proj}_X(\mathrm{Sym}(\im(\iota^\vee)))$. 
\end{proposition}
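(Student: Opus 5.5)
First I will identify $V(L)$ concretely. The torsion-free cokernel condition makes $Q:=E/\iota(L)$ a rank-one torsion-free sheaf. The surjection $E^\vee\to L^\vee$ dual to $\iota$ has image $\im(\iota^\vee)$, so the closed subscheme $V(L)=\mathrm{Proj}_X(\mathrm{Sym}(\im(\iota^\vee)))\subset\mb{P}(E)$ is the locus of lines $[v]\subset E_x$ with $v\in\iota(L)_x$. Hence $V(L)$ is the image of a section of $p$ over the open set $U\subseteq X$ where $\iota$ has constant rank. By torsion-freeness, $X\setminus U$ has codimension at least $2$, and over it $V(L)$ has at most $\mb{P}^1$-fibres. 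In particular $V(L)$ is irreducible of dimension $2$: it is the closure of a section $s:U\to\mb{P}(E)$, together with at most finitely many fibres, and these can be absorbed because $V(L)$ is the Proj of a rank-one sheaf on an irreducible base. I will argue irreducibility from this description, taking the closure of $s(U)$ as the reduced component.

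Next I will show $V(L)\subseteq\mc{E}(\Phi)$. Invariance of $L$ means $\Phi\circ\iota$ factors through $\iota\otimes\id_{\mc{T}_X}$. So for $x\in U$ and $v\in\iota(L)_x$ we have $\Phi_x(v)\in\iota(L)_x\otimes\mc{T}_{X,x}$, and therefore $\Phi_x(v)\wedge v=0$. Since $\mc{E}(\Phi)$ is closed (it is the zero locus of a section of a line bundle twisted by $p^*(\det E\otimes\mc{T}_X)$ on $\mb{P}(E)$) and $s(U)$ is dense in the component, the whole of $V(L)$ lies in $\mc{E}(\Phi)$.

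Finally I will show $V(L)$ is a whole irreducible component, not a proper subset of a larger irreducible subset of $\mc{E}(\Phi)$. Since $\mb{P}(E)$ is irreducible of dimension $3$ and $V(L)$ has dimension $2$, it suffices to show $\mc{E}(\Phi)\neq\mb{P}(E)$. I expect this to be the main obstacle. Suppose every $v$ were an eigenvector of $\Phi_x$ at a generic $x$. Writing $\Phi=\phi_0\otimes\xi$ via Lemma \ref{factor-Higgs}, this would force $\phi_0$ to be a scalar at generic points. But $\phi_0$ is trace-free, so $\phi_0$ would vanish generically, hence $\Phi=0$, contradicting $\Phi\neq0$. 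More directly, and avoiding the factorization: if each $v\in E_x$ is an eigenvector of each component of $\Phi_x$ in a local frame of $\mc{T}_X$, then each component is scalar, so it is $0$ by the trace-free condition. Therefore $\mc{E}(\Phi)$ is a proper closed subset, so every component has dimension at most $2$, and the irreducible $2$-dimensional $V(L)$ is a component. If one prefers, the dimension bound can be seen locally: $\mc{E}(\Phi)$ is cut out by one equation in each fibre that is not identically zero over the open set where $\Phi_x\neq0$.
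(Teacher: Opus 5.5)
Your containment step (invariance gives $\Phi_x(v)\wedge v=0$ over $U=X\setminus Z$, then closedness of $\mc{E}(\Phi)$) is the same as the paper's. Your third paragraph goes beyond the paper. The paper never checks that $V(L)$ is not properly contained in a larger irreducible subset of $\mc{E}(\Phi)$. Your observation that $\mc{E}(\Phi)\neq\mb{P}(E)$ supplies exactly this: a trace-free endomorphism for which every vector is an eigenvector is zero, so every component of $\mc{E}(\Phi)$ has dimension at most $2$. Say explicitly that this is where trace-freeness enters: for $\Phi=\id_E\otimes\xi$ with $\xi\neq 0$ one has $\mc{E}(\Phi)=\mb{P}(E)$, and the statement fails.

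The gap is in your first paragraph, the irreducibility of $V(L)$. You say the fibres over $Z$ ``can be absorbed because $V(L)$ is the Proj of a rank-one sheaf on an irreducible base.'' That is false in general. For $I=\mathfrak{m}^2=(x^2,xy,y^2)$ on $\C^2$, $\mathrm{Proj}\,\mathrm{Sym}(I)$ contains a full $\mb{P}^2$ over the origin, which is a second $2$-dimensional component. The statement concerns $V(L)=\mathrm{Proj}_X(\mathrm{Sym}(\im(\iota^\vee)))$, so you cannot simply declare $\overline{s(U)}$ to be ``the component''; you need $V(L)_{\mathrm{red}}=\overline{s(U)}$. Your second paragraph uses the same fact to place the fibres $p^{-1}(x)$, $x\in Z$, inside $\mc{E}(\Phi)$. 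Your description of $V(L)$ as the lines in $\iota(L)_x$ is also wrong over $Z$: there $\iota_x=0$, yet the fibre of $V(L)$ is all of $p^{-1}(x)$, which in turn forces $\Phi_x=0$.

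The missing input is that $\im(\iota^\vee)\simeq L^\vee\otimes I_Z$ is a quotient of the rank-$2$ bundle $E^\vee$. So $I_Z$ is locally generated by two elements, and it has height $2$ by torsion-freeness of $E/L$. Hence $Z$ is a local complete intersection, $\mathrm{Sym}(I_Z)$ equals the Rees algebra, and $V(L)\simeq\mathrm{Bl}_Z(X)$. This is irreducible and contains $s(U)$ as a dense open subset, which is precisely what the blow-up step in the paper's proof provides.

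Alternatively, argue directly. Near $x\in Z$ your section is $[f:g]$ with $f,g$ coprime and $V(f,g)=\{x\}$. For each $[a:b]$ the curve $\{bf-ag=0\}$ passes through $x$, so every point of $p^{-1}(x)$ is a limit of points of $s(U)$. With either repair your proof is complete, and it then proves more than the paper's argument does.
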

\begin{proof}
    Recall that sections of a projective bundle $\mb{P}(E)$ are in one-to-one correspondence with surjective maps $f:E^\vee\to \lambda$, where $\lambda\in \Pic(X)$ is a line bundle. The corresponding section is the one induced by the sub-line bundle $f^\vee:\lambda^\vee\to E$. Thus, if our invariant sub-line bundle $\iota:L\to E$ is such that $\iota^\vee$ is surjective, the eigen-variety $\mc{E}(\Phi)$ will necessarily include the section $\mb{P}(L)=\mathrm{Proj}_X(\mathrm{Sym}(L^\vee))$.

    If instead $\iota^\vee$ is not surjective, there is a codimension-two analytic subspace $Z \subset X$ so that $\im(\iota^\vee)\simeq L^\vee\otimes I_Z$. Above the complement $U:=X\setminus Z$ we have a section of $\mb{P}(E|_U)$ by the same argument as in the previous paragraph, so since the eigen-variety $\mc{E}(\Phi)$  is closed it must contain the closure of this section in $\mb{P}(E)$. If we set $s:\mathrm{Bl}_Z(X)\to X$ to be the blow-up map of $X$ centred at $Z$, then there is a natural surjective map $s^*E\twoheadrightarrow I_Z\cdot \mc{O}_{\mathrm{Bl}_Z(X)}\otimes L^\vee$ with $I_Z\cdot \mc{O}_{\mathrm{Bl}_Z(X)}\otimes L^\vee\simeq L^\vee\otimes \mc{O}_{\mathrm{Bl}_Z(X)}(-D_Z)$ by the universal property of the blow-up, where $D_Z$ is the exceptional divisor. This map gives an inclusion of $\mathrm{Proj}_X(\mathrm{Sym}(L^\vee\otimes I_Z)$ into $\mb{P}(E)$ whose restriction to $U$ is the same as the section described by $\iota^\vee|_U$.
\end{proof}
If $(E,\Phi)$ is a non-zero rank-2 trace-free co-Higgs bundle that admits an invariant sub-line bundle, take $L_1$ and $L_2$ to be two invariant sub-line bundles with torsion-free co-kernel (with $L_1=L_2$ if $\Phi$ is everywhere nilpotent). We can write $\Phi=\phi_0\otimes \xi$ as in Proposition \ref{factor-Higgs}, and the eigen-variety is given by the union $$\mc{E}(\Phi)=V(L_1)\cup V(L_2)\cup p^{-1}(V(\xi))\subseteq \mb{P}(E)$$
where $p:\mb{P}(E)\to X$ is the bundle map.

\subsection{Two-dimensional symplectic leaves}
As shown in Lemma \ref{factor-Higgs}, for any rank-2 trace-free co-Higgs bundle $(E,\Phi)$ on a compact complex manifold $X$ there is a line bundle $\lambda\in \Pic(X)$ and $\phi_0\in H^0(X,\sEnd_0(E)\otimes\lambda)$ and $\xi\in H^0(X,\mc{T}_X\otimes \lambda^{-1})$ so that $\Phi=\phi_0\otimes \xi$. On the complement $U=\mb{P}(E)\setminus \mc{E}(\Phi)$ of the eigen-variety of $\Phi$, we can write the Poisson structure $\sigma_\Phi$ as $\sigma_\Phi|_U=\psi\wedge p^*\zeta$, where $\psi\in H^0(U,\mc{T}_p)$, $\zeta\in H^0(p(U),\mc{T}_X)$, and both $\psi$ and $\zeta$ are nowhere-vanishing. For any integral curve $\gamma$ of $\zeta$, there is a symplectic leaf of $\sigma_\Phi$ of the form $p^{-1}(\gamma)\cap U$. Note that for any choice of $\zeta$ such that $\sigma_\Phi|_U=\psi\wedge p^*\zeta$, there will be a nowhere-vanishing holomorphic function $f \in \mc{O}_X(p(U))$ so that $\zeta=f\xi$, so the integral curves $\gamma$ are uniquely determined by $\xi$. In particular, if $\pi_*\xi$ vanishes at $x \in B\setminus \pi(V(\xi))$, then $p^{-1}(\pi^{-1}(x))\cap U$ will be an embedded symplectic leaf of $(\mb{P}(E),\sigma_\Phi)$.

\begin{proposition}
    Let $(E,\Phi)$ be a rank-2 co-Higgs bundle on a non-Kähler elliptic surface $\pi:X\to B$, and let $(\mb{P}(E),\sigma_\Phi)$ be the corresponding holomorphic Poisson manifold. Write $\Phi=\phi_0\otimes \xi$ with $\phi_0\in H^0(\sEnd_0(E)\otimes \lambda^{-1})$, $\xi\in H^0(\mc{T}_X\otimes \lambda)$ as in Lemma \ref{factor-Higgs}. If $M\subset \mb{P}(E)$ is a closed Poisson subspace which is reduced and irreducible, then  $M\subseteq \mc{E}(\Phi)$, $M=p^{-1}(\pi^{-1}(x))$ for some $x \in B$ with $\pi_*\xi(x)=0$, or $M=\mb{P}(E)$.
\end{proposition}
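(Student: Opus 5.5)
The plan is a dimension count combined with the local description of the Poisson structure away from the eigen-variety. Let $M\subset\mb{P}(E)$ be a closed, reduced, irreducible Poisson subspace. Suppose first that $M\not\subseteq\mc{E}(\Phi)$. Then $M\cap U$ is a non-empty Zariski-open dense subset of $M$, where $U=\mb{P}(E)\setminus\mc{E}(\Phi)$. There are two facts to use. A closed Poisson subspace is a union of symplectic leaves; more precisely, at smooth points the Hamiltonian vector fields are tangent to $M$, so $M$ contains every leaf through each of its points. On $U$ every symplectic leaf is two-dimensional, of the form $p^{-1}(\gamma)\cap U$ with $\gamma$ an integral curve of $\xi$ (equivalently of $\zeta=f\xi$). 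Hence $M\cap U$ is a union of sets $p^{-1}(\gamma)\cap U$, so $\dim M\geq 2$. If $\dim M=3$, irreducibility gives $M=\mb{P}(E)$.

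So assume $\dim M=2$. Then $M\cap U$ is a union of leaves, each open in a surface, and irreducibility forces $M$ to be the closure of a single leaf $p^{-1}(\gamma)\cap U$. It follows that $\overline{\gamma}=p(M)$ is a closed irreducible analytic curve in $X$, since $p$ is proper, and that $\overline{\gamma}$ is invariant under $\xi$ away from $V(\xi)$.

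The next step, which I expect to be the main obstacle, is to show that a compact curve in $X$ tangent to the meromorphic vector field $\xi$ must be a fibre $\pi^{-1}(x)$. The key input is that $X$ is a non-K\"ahler elliptic surface: every compact curve in $X$ is contained in a fibre of $\pi$. The reason is that $\pi$ restricted to a curve is either constant or surjective onto $B$, and a multisection would force $X$ to be K\"ahler, or equivalently would contradict $c_1$ being torsion or the absence of horizontal divisors. So $\overline{\gamma}$ is an irreducible component of a fibre, hence $\overline{\gamma}=\pi^{-1}(x)$ (for a Hopf surface, fibres are irreducible up to multiplicity). Thus $M=p^{-1}(\pi^{-1}(x))$.

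It then remains to check that $\pi_*\xi(x)=0$. The fibre $\pi^{-1}(x)$ is an integral curve of $\xi$ at generic points, so $\xi$ is tangent to it. Therefore $d\pi(\xi)$ vanishes along $\pi^{-1}(x)$, i.e. the induced section $\pi_*\xi$ vanishes at $x$. One has to handle the possibility that $\xi$ has poles or zeros along the whole fibre, i.e. $\pi^{-1}(x)\subseteq V(\xi)$. In that case $M=p^{-1}(V(\xi)$-component$)\subseteq\mc{E}(\Phi)$, so it falls in the first alternative anyway.

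Finally, if $M\subseteq\mc{E}(\Phi)$ we are in the first case and there is nothing to prove. This completes the trichotomy.
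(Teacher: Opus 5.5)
Your proposal is correct and follows essentially the same route as the paper. Both arguments use three facts: a closed reduced Poisson subspace is a union of leaves, the leaves off $\mc{E}(\Phi)$ have the form $p^{-1}(\gamma)\cap U$, and every curve on a non-K\"ahler elliptic surface is a fibre of $\pi$. You split cases by $\dim M$ where the paper counts two-dimensional leaves in $M$, and you supply details the paper leaves implicit, such as using properness of $p$ to get $\overline{\gamma}=p(M)$ analytic. Your remark that reduced fibres are irreducible does not need to be restricted to Hopf surfaces, since every non-K\"ahler elliptic surface has only smooth, possibly multiple, elliptic fibres.
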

\begin{proof}
    Recall that a closed reduced subspace of a holomorphic Poisson manifold is a Poisson subspace if and only if it is a union of symplectic leaves. Therefore the previous descriptions of symplectic leaves for $(\mb{P}(E),\sigma_\Phi)$ can be used to classify the closed Poisson subspaces which are reduced and irreducible. 
    
    First suppose that $M$ contains no two-dimensional symplectic leaves. Since $\mc{E}(\Phi)$ is equal to the union of all zero-dimensional symplectic leaves, $M\subset \mc{E}(\Phi).$

    If $M$ contains a unique two-dimensional symplectic leaf, then $M$ will be equal to the closure of this leaf $F$. Furthermore, $M$ will have dimension equal to two as $M\subset F\cup \mc{E}(\Phi)$ and $\dim(F\cup\mc{E}(\Phi))=2$. The symplectic leaf $F$ corresponds to some integral curve $\gamma$ of $\xi$. Since $\dim\overline{\gamma}=\dim\overline{F}-1=1$, $\overline{\gamma}$ must be a prime divisor of $X$. Every prime divisor of $X$ is of the form $\pi^{-1}(x)$ for some $x \in B$, and since $\gamma\subset \pi^{-1}(x)$ is an integral curve of $\xi$, $\pi_*\xi(x)=0$.

    Finally, suppose that $M$ contains more than one two-dimensional symplectic leaf. Since we have assumed that $M$ is irreducible, this implies that $\dim M=3$. The unique three-dimensional closed subspace of $\mb{P}(E)$ is $\mb{P}(E)$ itself, so $M=\mb{P}(E)$.
\end{proof}

For the case of a co-Higgs bundle $(E,\Phi)$ on a Hopf surface, the description of the symplectic leaves is somewhat more explicit. Using Lemma \ref{factor-Higgs}, we can write $\Phi$ in the form $\Phi=\phi_0\otimes \xi$, where $\phi_0$ is a $\lambda$-twisted Higgs field over $E$ for some $\lambda \in \Div$ and $\xi \in H^0(X,\mc{T}_X\otimes \lambda^{-1})$. The Poisson structure $\sigma_\Phi$ lifts to a Poisson structure $f\frac{\partial}{\partial t} \wedge(\alpha\frac{\partial}{\partial x}+\beta\frac{\partial}{\partial y})$ on $(\C^2\setminus \{0\})\times \mb{P}^1$ where $\alpha$ and $\beta$ are interpreted as homogeneous polynomials in $\C[x,y]$ of the appropriate degree, $t$ is a coordinate in the $\mb{P}^1$-fibre direction, and $f$ vanishes exactly on the pre-image of the eigen-variety $\mc{E}(\Phi)$. Note that while $\alpha\frac{\partial}{\partial x}+\beta\frac{\partial}{\partial y}$ does not necessarily descend to a well-defined vector field on $X$, it does descend to a projectivized vector field in $\mb{P}(\mc{T}_X)$ since it is the lift of a section of $\mc{T}_X\otimes \lambda^{-1}$ and $\lambda$ is defined by a constant factor of automorphy, meaning that for any non-constant integral curve $\gamma$ of $\xi$ there is a symplectic leaf of $(\mb{P}(E), \sigma_\Phi)$ of the form $p^{-1}(u(\gamma))\cap (\mb{P}(E)\setminus \mc{E}(\Phi)),$ where $u:\C^2\setminus\{0\}\to X$ is the universal covering map. When $X$ is elliptic so that there are integers $r_2\geq r_1\geq 1$ so that $a^{r_1}=b^{r_2}$, the map $d\pi:\mc{T}_X\otimes \lambda^{-1}\to \pi_*(\pi^*\mc{T}_{\mb{P}^1}\otimes \lambda^{-1})\simeq \pi_*\lambda^{-1}\otimes \mc{O}_{\mb{P}^1}(2)$ is given by $d\pi(\alpha\frac{\partial}{\partial x}+\beta\frac{\partial}{\partial y})\mapsto \pi_*(r_1\alpha x^{r_1-1}y^{r_2}-r_2\beta x^{r_1}y^{r_2-1})$, for any point $q=[s:t]=[x^{r_1}:y^{r_2}] \in \mb{P}^1$ satisfying $r_1t\pi_*(\alpha x^{r_1-1})(q)-r_2s\pi_*(\beta y^{r_2-1})(q)=0$ and $\Phi|_{\pi^{-1}(q)}\neq 0$, there is a symplectic leaf of the form $p^{-1}\left(\pi^{-1}\left(q\right)\right)\cap \left(\mb{P}(E)\setminus \mc{E}(\Phi)\right)$. Since every closed curve of $X$ is a fibre of $\pi$, the leaves of this type are the only 2-dimensional symplectic leaves of $\sigma_\Phi$ that admit a locally closed embedding into $\mb{P}(E)$.

When $X$ is generic, there will be a two-dimensional leaf contained in $p^{-1}(V(x))$ when $\alpha(0,y)=0$ and one contained in $p^{-1}(V(y))$ when $\beta(x,0)=0$. Similarly, when $X$ is exceptional there will be a two-dimensional symplectic leaf contained in the unique divisor $V(y)$ if $\beta(x,0)=0$.

In principle, the 2-dimensional symplectic leaves can be directly computed by finding the integral curves of the vector field $\alpha\frac{\partial}{\partial x}+\beta\frac{\partial}{\partial y}$ on $\C^*\setminus \{0\}$. However in practice these computations become intractable to do explicitly unless the vector field is is of sufficiently low degree.

\begin{example}
    Suppose that $X$ is an elliptic Hopf surface with $a^{r_1}=b^{r_2}$ for some $r_2\geq r_1\geq 1$ and the co-Higgs bundle $(E,\Phi)$ on $X$ is non-zero nilpotent with $\ker(\Phi)=L$. Then for $\lambda:=\det(E)\otimes L^{-2}\in \Div$ there are $\phi_0\in H^0(X,\sEnd_0(E)\otimes \lambda)$, $\alpha \in H^0(X,L_a\otimes\lambda^{-1})$, and $\beta \in H^0(X,L_b\otimes \lambda^{-1})$ so that $\phi_0$ is nowhere vanishing and $\Phi=(\alpha\phi_0, \beta\phi_0)$. Since $\Phi$ is nilpotent, the eigen-variety is given by $\mc{E}(\Phi)=V(L)\cup p^{-1}V(\alpha,\beta)$. The embedded 2-dimensional symplectic leaves are all of the form $p^{-1}(\pi^{-1}(q))\cap (\mb{P}(E)\setminus \mc{E}(\Phi))$ for some $q=[s:t]=[x^{r_1}:y^{r_2}]\in \mb{P}^1$ with $r_1t\pi_*(\alpha x^{r_1-1})(q)-r_2s\pi_*(\beta y^{r_2-1})(q)=0$ and $\Phi|_{\pi^{-1}(q)}\neq 0$. If $\deg(\lambda)<0$ and there is a section $t \in H^0(X,\lambda^{-1})$ so that $\alpha=xt$ and $\beta=yt$, then every symplectic leaf is contained in a fibre of $\pi\circ p$.
\end{example}

\begin{example}
    Suppose that $E$ is a rank-2 vector bundle such that, after applying an allowable elementary modification along some irreducible divisor $D$, the resulting bundle $E_0$ is of the form $E_0\simeq L\oplus L^{-1}$ where $L^2 \not\in \Div$. Every non-zero co-Higgs bundle with underlying vector bundle $E$ is of the form $\Phi=\phi_0\otimes \xi$ with $\xi \in H^0(X,\mc{T}_X\otimes \mc{O}_X(-D))\setminus \{0\}$, and $\phi_0$ is the unique-up-to-scaling section in $H^0(X,\sEnd_0(E)\otimes \mc{O}_{X}(D))$. In this case the eigen-variety is given by $\mc{E}(\Phi)=V(L)\cup V(L^{-1})\cup p^{-1}(V(\xi))$. 
    
    If $X$ is classical, then there are constants $\alpha,\beta \in \C$ so that $\xi=\alpha\frac{\partial}{\partial x}+\beta\frac{\partial}{\partial y}$. In this case $V(\xi)=\emptyset$, and the 2-dimensional symplectic leaves are all of the form $p^{-1}(u(V(\alpha x+\beta y-c)))\cap (\mb{P}(E)\setminus \mc{E}(\Phi))$ for some $c \in \C$.

    If $X$ is resonant and $D\neq V(y)$, then $\xi=\alpha\frac{\partial}{\partial x}$ for some $\alpha \in \C$, so that $V(\xi)=\emptyset$ and the 2-dimensional symplectic leaves are of the form $p^{-1}(u(V(x-c)))\cap (\mb{P}(E)\setminus \mc{E}(\Phi))$ for some $c \in \C$. If instead $D=V(y)$, then $\xi=\alpha y^{r-1}\frac{\partial}{\partial x}+\beta\frac{\partial}{\partial y}$ for some $\alpha,\beta\in \C$. When $\beta\neq 0$, $V(\xi)=\emptyset$ and the 2-dimensional leaves are all of the form $p^{-1}(u(V(rx-\alpha y^r-c)))\cap (\mb{P}(E)\setminus \mc{E}(\Phi))$ for some $c \in \C$. If $\beta=0$ and $\alpha\neq 0$, then $V(\xi)=V(y)$ and the 2-dimensional leaves are of the form $p^{-1}(u(V(y-c)))\cap (\mb{P}(E)\setminus \mc{E}(\Phi))$ for some $c \in \C^*$.

    If $X$ is hyper-resonant or generic, then the assumption that $\xi\neq 0$ implies that either $D=V(x)$ or $D=V(y)$. In either case, $V(\xi)=\emptyset,$ and the two-dimensional symplectic leaves are of the form $p^{-1}(u(V(x-c)))\cap (\mb{P}(E)\setminus \mc{E}(\Phi))$ for some $c \in \C$ (when $D=v(x)$) or $p^{-1}(u(V(y-c)))\cap (\mb{P}(E)\setminus \mc{E}(\Phi))$ for some $c \in \C$ (when $D=V(y)$).

    If $X$ is exceptional, the assumption that $\xi\neq 0$ forces $\xi=\alpha y^{r-1}\frac{\partial}{\partial x}$ so that $V(\xi)=V(y)$ and the 2-dimensional symplectic leaves are all of the form $p^{-1}(u(V(y-c)))\cap (\mb{P}(E)\setminus \mc{E}(\Phi))$ for some $c \in \C^*$.
\end{example}

\section*{Acknowledgements}
The author was supported by an NSERC postdoctoral fellowship [PDF - 587400 - 2024] in the writing of this article, as well as by Steven Rayan and the University of Saskatchewan. 
The author also wishes to thank Ruxandra Moraru and Raphaël Belliard for helpful discussions relating to this research.
\bibliographystyle{alpha}
\bibliography{references}
\end{document}